\numberwithin{equation}{section}
\newtheorem{Theorem}{Theorem}[section]
\newtheorem*{Theorem*}{Theorem}
\newtheorem{Corollary}[Theorem]{Corollary}
\newtheorem{Lemma}[Theorem]{Lemma}
\newtheorem{Proposition}[Theorem]{Proposition}
{ \theoremstyle{definition}
	\newtheorem{Definition}[Theorem]{Definition}

	\newtheorem{Remark}[Theorem]{Remark} }
\newcommand{\R}{\mathbb{R}} 
\newcommand{\Z}{\mathbb{Z}} 
\newcommand{\N}{\mathbb{N}} 
\newcommand{\C}{\mathbb{C}} 
\newcommand{\OP}{\mathbb{OP}}
\newcommand{\restr}[1]{|_{#1}}
\newcommand{\rvline}{\hspace*{-\arraycolsep}\vline\hspace*{-\arraycolsep}}
\DeclareMathOperator{\pr}{pr}
\DeclareMathOperator{\vecspan}{span}
\DeclareMathOperator{\ad}{ad}
\DeclareMathOperator{\Id}{Id}
\DeclareMathOperator{\diag}{diag}
\DeclareMathOperator{\inv}{inv}
\DeclareMathOperator{\Span}{span}
\DeclareMathOperator{\std}{std}
\DeclareMathOperator{\GL}{GL}
\DeclareMathOperator{\SO}{SO}
\DeclareMathOperator{\Spin}{Spin}
\DeclareMathOperator{\SU}{SU}
\DeclareMathOperator{\Sp}{Sp}
\DeclareMathOperator{\U}{U}
\DeclareMathOperator{\Special}{S}
\DeclareMathOperator{\Hom}{Hom}
\DeclareMathOperator{\Ric}{Ric}
\DeclareMathOperator{\Cl}{Cl}
\DeclareMathOperator{\Mat}{M}
\DeclareMathOperator{\End}{End}
\DeclareMathOperator{\Ad}{Ad}
\begin{document}
\allowdisplaybreaks

\newcommand{\arXivNumber}{2406.18337}

\renewcommand{\PaperNumber}{017}

\FirstPageHeading
	
\ShortArticleName{The Geometry of Generalised Spin$^r$ Spinors on Projective Spaces}
	
\ArticleName{The Geometry of Generalised Spin$\boldsymbol{{}^r}$ Spinors\\ on Projective Spaces}
	
\Author{Diego ARTACHO~$^{\rm a}$ and Jordan HOFMANN~$^{\rm b}$}
	
\AuthorNameForHeading{D.~Artacho and J.~Hofmann}
	
\Address{$^{\rm a)}$~Imperial College London, London SW7 2AZ, UK}
\EmailD{\href{mailto:d.artacho21@imperial.ac.uk}{d.artacho21@imperial.ac.uk}}
	
\Address{$^{\rm b)}$~King's College London, London WC2R 2LS, UK}
\EmailD{\href{mailto:jordan.2.hofmann@kcl.ac.uk}{jordan.2.hofmann@kcl.ac.uk}}
		
\ArticleDates{Received July 01, 2024, in final form March 01, 2025; Published online March 11, 2025}
		
\Abstract{In this paper, we adapt the characterisation of the spin representation via ex\-terior forms to the generalised spin$^r$ context. We find new invariant spin$^r$ spinors on the projective spaces $\mathbb{CP}^n$, $\mathbb{HP}^n$, and the Cayley plane $\mathbb{OP}^2$ for all their homogeneous realisations. Specifically, for each of these realisations, we provide a complete description of the space of invariant spin$^r$ spinors for the minimum value of $r$ for which this space is non-zero. Additionally, we demonstrate some geometric implications of the existence of special spin$^r$ spinors on these spaces.}
		
\Keywords{special spinors; projective spaces; generalized spin structures; spin$^c$; spin$^h$}
		
\Classification{53C27; 15A66; 57R15}

		\section{Introduction} \label{sec:intro}
		
		A topic of major interest in differential geometry is the existence or non-existence of \textit{special $G$-structures} on a given smooth manifold $M$; classical examples include Riemannian, complex, symplectic, and spin structures. Spin geometry, in particular, gives a way of accessing global geometric information about \textit{Riemannian spin manifolds} via sections of a certain naturally defined vector bundle called the \emph{spinor bundle}. Indeed, for a Riemannian spin manifold $M$, there are a number of results of the form:
		\begin{equation*}
			\text{$M$ carries a spinor satisfying equation $\mathscr{E}$} \implies \text{$M$ has geometric property $\mathscr{P}$.}
		\end{equation*}
		For example, it is well known that a manifold carrying a non-zero parallel spinor is Ricci-flat, and, more generally, that the existence of a non-zero real (resp.\ purely imaginary) Killing spinor implies that the metric is Einstein with positive (resp.\ negative) scalar curvature \cite{BFGK,Fri80}. Other notable examples include the bijection between generalised Killing spinors in dimension $5$ (resp.~$6$, resp.\ $7$) and hypo $\SU(2)$-structures (resp.\ half-flat $\SU(3)$-structures, resp.\ co-calibrated $\mathrm{G}_2$-structures) \cite{ACFH15,CS07}, and the spinorial description of isometric immersions into Riemannian space forms \cite{BLR,F_immersions,KS}.
		
		However, not every manifold can be endowed with a spin structure; the question then naturally arises as to whether one can apply the tools of spin geometry to non-spin manifolds. The answer is affirmative, and there are several possible approaches. The unifying idea is to consider suitable \textit{enlargements} of the spin groups, i.e., Lie groups $L_n$ equipped with homomorphisms
		\[ \Spin(n) \overset{\iota_n}{\longrightarrow} L_n \overset{p_n}{\longrightarrow} \SO(n) \]
		such that $p_n \circ \iota_n$ is the usual two-sheeted covering $\Spin(n) \to \SO(n)$. Hence, an oriented Riemannian $n$-manifold admitting a lift of the structure group to $L_n$ is a weaker condition than being spin. Following ideas by Friedrich and Trautman \cite{FT}, the so-called \textit{spinorial Lipschitz structures} have garnered much attention in recent years \cite{CS1,CS2,CS3}. These naturally arise by following the inverse approach: starting with a suitable generalisation of the concept of \textit{spinor bundle}, one investigates the enlargement $L_n$ of $\Spin(n)$ to which this bundle corresponds. These~$L_n$ are called \textit{Lipschitz groups}.
		
		Another choice of $L_n$ was introduced by Espinosa and Herrera \cite{EH16}, who had the idea of \textit{spinorially twisting} the spin group.\ In our setting, this corresponds to taking, for $r \in \N$, the groups
		\[ L_n^r = \Spin^r(n) \coloneqq ( \Spin(n) \times \Spin(r) )/\langle (-1 , -1) \rangle \]
		with the obvious homomorphisms. We say that an oriented Riemannian $n$-manifold is spin$^r$ if it admits a lift of the structure group to $\Spin^r(n)$. The case $r=1$ is the classical spin case, and the cases $r=2, 3$ give rise to spin$^{\C}$ and spin$^{\mathbb{H}}$ geometry respectively, which have been a fruitful field of study over the past decades -- see \cite{Friedrich_book, moroianu_spinc} for spin$^{\C}$ and \cite{herrera_spinq,lawson} and references therein for spin$^{\mathbb{H}}$.
		
		These structures have been characterised topologically by Albanese and Milivojevi\'{c} in \cite{AM21}, where they show that a manifold is spin$^r$ if, and only if, it can be embedded into a spin manifold with codimension $r$. In \cite{gen_spin_AL}, Lawn and the first author focused on the study of spin$^r$ structures on homogeneous spaces, establishing a bijection between $G$-invariant spin$^r$ structures on $G/H$ and certain representation-theoretical data -- see Theorem~\ref{cor:inv_gen_spin}.
		
		Analogously to the usual spin case, from a given spin$^r$ structure one can construct, for each odd $m \in \N$, the so-called \textit{$m$-twisted spin$^r$ spinor bundle} -- see Definition \ref{def:twistedbundle}. Its sections are called \textit{$m$-twisted spin$^r$ spinors} or simply \textit{spin$^r$ spinors}, and, as in the classical case, they encode geometric information: for a spin$^r$ manifold $M$, there are results of the form:
		\begin{equation*}
			\text{$M$ carries a spin$^r$ spinor satisfying equation $\mathscr{E}$} \implies \text{$M$ has property $\mathscr{P}$. }
		\end{equation*}
		Some of these results can be found in \cite{EH16,ES19}, for example (see Theorem~\ref{thm:herrera} for more explicit results):
		\begin{itemize}\itemsep=0pt
			\item The existence of a generalised Killing spin$^r$ spinor ensures a certain decomposition of the Ricci tensor \cite[Theorem~3.3]{EH16}.
			\item The existence of a parallel pure spin$^2$ \big(resp.\ spin$^3$\big) spinor implies that the manifold is K\"{a}hler (resp.\ quaternionic K\"{a}hler) \cite[Corollaries~4.10 and 4.12]{ES19}.
		\end{itemize}
		
		In this paper, we illustrate how invariant twisted spin$^r$ spinors on the projective spaces $\mathbb{CP}^n$, $\mathbb{HP}^n$ and the Cayley plane $\mathbb{OP}^2$ encode different geometric properties of these manifolds. To this end, we proceed as follows:
		\begin{enumerate}\itemsep=0pt
			\item[(1)] Consider a homogeneous realisation $M=G/H$ of the corresponding space, equipped with a generic $G$-invariant metric.
			\item[(2)] Find the minimum value of $r\in \N$ such that $M$ has a $G$-invariant spin$^r$ structure carrying non-trivial invariant $m$-twisted spin$^r$ spinors, for some odd $m \in \N$, and describe the space of such spinors.
			\item[(3)] Study the geometric properties of $M$ encoded by those invariant spin$^r$ spinors which satisfy additional algebraic properties.
		\end{enumerate}
		
		The realm of projective spaces provides a fruitful ground for study. In particular, we prove that Friedrich's construction of generalised Killing spinors on $\mathbb{CP}^3$ \cite[p.\ 146]{BFGK} cannot be generalised to higher complex dimensions, showing that this is the only dimension for which $\mathbb{CP}^{2k+1}$ has an $\Sp(k+1)$-invariant metric carrying non-trivial invariant generalised Killing spinors. We also find the spin$^{\mathbb{H}}$ spinor on $\mathbb{HP}^n$ inducing the standard quaternionic K\"{a}hler structure (see \cite[Corollary~4.12]{ES19}) with new representation-theoretical methods, and we show that this is, up to scaling, the \textit{only} $\Sp(n+1)-$invariant spin$^{\mathbb{H}}$ spinor on this space. Finally, we find that the minimum values of $r$ and $m$ such that $\mathbb{OP}^2$ carries non-trivial $F_4$-invariant $m$-twisted spin$^r$ spinors are $r=9$ and $m=3$, and the space of such spinors is four-dimensional.
		
		The computations carried out in this paper illustrate an extension of the differential forms approach to the spin representation (see, e.g., \cite{AHL}) to the context of spin$^r$ structures. These techniques allow us to express and manipulate complicated twisted spinors in an easy and readable way, finding new examples of special spin$^r$ spinors. The main contribution of this paper is, then, the fusion of the differential forms approach with the power of spin$^r$ geometry to encode geometric properties of manifolds which are not necessarily spin. Our results are summarised as follows.
		
		\begin{theorem*}
			Let $G$ be a compact, simple and simply connected Lie group acting transitively on the projective space $M = \mathbb{CP}^n, \mathbb{HP}^n$ or $\mathbb{OP}^2$. Then, the minimum values of $r,m \in \N$ $($with $m$ odd$)$ such that $M$ admits a $G$-invariant spin$^r$ structure carrying a non-zero invariant $m$-twisted spin$^r$ spinor are shown in Table~{\rm\ref{table:summary}}, together with the geometric information such spinors encode.\looseness=1
		\end{theorem*}
		
		\begin{table}\renewcommand{\arraystretch}{1.2}
			\resizebox{\textwidth}{!}{
				\begin{tabular}{c c c c c c c c}
					\toprule
					$M$ & $n$ & $G$ & $r$ & $m$ & $\dim (\Sigma_{\ast , r }^{m} )_{\mathrm{inv}}$ & Special spinors & Geometry \\ \toprule
					\multirow{3}{*}{$\mathbb{CP}^n$} & $k$ & $\SU(k+1)$ & $2$ & $1$ & $2$ & pure, parallel & K\"{a}hler--Einstein \\
					& \multirow{2}{*}{$2k+1$}	& \multirow{2}{*}{$\Sp(k+1)$} & $2$, if $k$ even & $1$ & $2$ & pure, parallel & K\"{a}hler--Einstein \\
					& & & $1$, if $k$ odd & $1$ & $2$ & generalised Killing & Einstein, nearly K\"{a}hler ($n=3$ ($\dagger$)) \\
					\midrule
					\multirow{2}{*}{$\mathbb{HP}^n$} & $2k+1$ & $\Sp(2k+2)$ & $3$ & $2k+1$ & $1$ & pure, parallel & quaternionic K\"{a}hler \\
					& $2k$ & $\Sp(2k+1)$ & -- & -- & -- & -- & --\\
					\midrule
					$\mathbb{OP}^2$ & -- & $F_4$ & $9$ & $3$ & $4$ & -- & -- \\
					\bottomrule
			\end{tabular} }
			\caption{For each compact, simple and simply connected Lie group $G$ acting transitively on $M$: the minimum values of $r$, $m$ such that $M$ admits a $G$-invariant spin$^r$ structure carrying a non-zero invariant $m$-twisted spin$^r$ spinor, and the geometric significance of these. For $\dagger$, see \cite[p.\ 146]{BFGK}. }
			\label{table:summary}
		\end{table}
		
		\FloatBarrier
		
		\section{Preliminaries}\label{section:preliminaries}
		
		We begin by introducing the necessary background definitions and results concerning spin and spin$^r$ geometry within the context of homogeneous spaces. For an introduction to spin geometry we refer the reader to \cite{BFGK,LM}, for spin$^r$ manifolds to \cite{AM21,gen_spin_AL,EH16}, and for homogeneous spaces to~\cite{Arv03}.
		
		\subsection{Invariant metrics on reductive homogeneous spaces}\label{sec:inv_metrics}
		
		Let $G/H$ be a reductive homogeneous space with reductive decomposition $\mathfrak{g} = \mathfrak{h} \oplus \mathfrak{m}$, where $\mathfrak{g}$ is the Lie algebra of $G$, $\mathfrak{h}$ is the Lie algebra of $H$ and $\mathfrak{m}$ is an $\Ad_H$-invariant complement of $\mathfrak{h}$ in~$\mathfrak{g}$. Suppose that the adjoint representation of $H$ on $\mathfrak{m}$ -- which, under the usual identifications, corresponds to the isotropy representation of the homogeneous space -- decomposes as a direct sum of irreducible components $\mathfrak{m} = \mathfrak{m}_1 \oplus \cdots \oplus \mathfrak{m}_k$. We would like to find all the $\Ad_H$-invariant inner products on $\mathfrak{m}$ (which correspond to $G$-invariant metrics on the homogeneous space $G/H$, see, e.g., \cite{Arv03}). Of course, such metrics need not exist. However, if $H$ is compact, using Weyl's trick one readily sees that they do exist. We would like to show that invariant inner products on each irreducible component are unique up to positive scaling, and that any invariant inner product on $\mathfrak{m}$ is a positive linear combination of invariant inner products on the irreducible components, yielding a $k$-parameter family of invariant metrics. This is of course false in general (consider two copies of the same irreducible representation admitting an invariant metric). However, this is essentially the only obstruction, as we shall see now.
		
		This is a very important result which appears to be well known, but it is surprisingly hard to find in the literature. We include it here with a full proof.
		
		\begin{Proposition}\label{prop:invmetrics1}
			Let $\mathfrak{g}$ be a finite-dimensional real Lie algebra and $\rho \colon \mathfrak{g} \to \End_{\mathbb{R}}(V)$ a finite-dimensional irreducible real representation of $\mathfrak{g}$. Suppose that there exists a $\rho$-invariant inner product on $V$. Then, it is unique up to positive scaling.
		\end{Proposition}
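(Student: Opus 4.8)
The plan is to use a complexification argument combined with Schur's Lemma, which is the standard route for uniqueness statements of this kind. First I would observe that the set of $\rho$-invariant symmetric bilinear forms on $V$ is a real vector space, and the claim is equivalent to showing it is one-dimensional (then positivity pins down the scaling to be positive). A $\rho$-invariant inner product $\langle\cdot,\cdot\rangle$ gives an isomorphism $\Phi \colon V \to V^*$ intertwining $\rho$ with the dual representation $\rho^*$; composing any other invariant symmetric form $B$ (now viewed as a map $V \to V^*$) with $\Phi^{-1}$ yields a $\rho$-equivariant endomorphism $A = \Phi^{-1} \circ B \in \End_{\mathbb{R}}(V)$, self-adjoint with respect to $\langle\cdot,\cdot\rangle$. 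So it suffices to show that every $\rho$-equivariant self-adjoint $A$ is a real scalar multiple of the identity.

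The key step is the structure of the endomorphism algebra $D = \End_{\mathfrak{g}}(V)$ of a real irreducible representation. By the real version of Schur's Lemma, $D$ is a finite-dimensional real division algebra, hence isomorphic to $\mathbb{R}$, $\mathbb{C}$, or $\mathbb{H}$. In each case I would argue that a self-adjoint element must lie in the center's real part: the inner product induces an involution (the adjoint) on $D$, and a self-adjoint element $A$ generates a commutative subalgebra on which the adjoint acts trivially; diagonalising $A$ over $\mathbb{R}$ using self-adjointness, its eigenspaces are $\rho$-invariant (since $A$ is equivariant), so by irreducibility there is only one eigenspace and $A = \lambda \,\mathrm{Id}$ with $\lambda \in \mathbb{R}$. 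Finally, if $B$ is also positive definite then $\lambda > 0$, which gives the result. An alternative, perhaps cleaner, route avoids the classification: directly diagonalise the $\langle\cdot,\cdot\rangle$-self-adjoint operator $A$ over $\mathbb{R}$ (it is self-adjoint on a real inner product space, so real-diagonalisable), note each eigenspace is $\rho$-invariant because $A$ commutes with $\rho(\mathfrak{g})$, and conclude by irreducibility that $A$ is scalar.

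I expect the main obstacle to be purely expository: making sure the reduction from "invariant inner product" to "self-adjoint equivariant endomorphism" is stated cleanly, and in particular verifying that an invariant \emph{symmetric} bilinear form corresponds under $\Phi^{-1}$ to a \emph{self-adjoint} operator (this is a short computation: $\langle Ax, y\rangle = B(x,y) = B(y,x) = \langle Ay, x\rangle = \langle x, Ay\rangle$). Once that is in place, the real-spectral-theorem argument is immediate and does not require invoking the Frobenius classification at all, so I would present that shorter version. The only subtlety worth a sentence is why real self-adjoint operators are diagonalisable over $\mathbb{R}$ here — this is just the real spectral theorem applied to the finite-dimensional real inner product space $(V, \langle\cdot,\cdot\rangle)$.
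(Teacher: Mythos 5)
Your proposal is correct, and the spectral-theorem route you ultimately advocate is genuinely different from --- and noticeably shorter than --- the paper's argument. The paper forms the equivariant endomorphism $\varphi_1^{-1}\circ\varphi_2$ as you do, but then invokes Schur's lemma together with the Frobenius classification of finite-dimensional real division algebras to split into the three cases $\End(\rho)\cong\mathbb{R},\,\mathbb{C},\,\mathbb{H}$; in the $\mathbb{C}$ and $\mathbb{H}$ cases it then runs a careful by-hand computation to show that the bilinear forms $B_1(J_i\cdot,\cdot)$ are skew-symmetric, so that symmetry of $B_1,B_2$ forces the non-scalar coefficients to vanish. Your argument bypasses all of this: observe that the equivariant operator $A=\Phi^{-1}\circ B$ is self-adjoint with respect to the invariant inner product (the one-line symmetry check you gave), apply the real spectral theorem to diagonalise $A$ over $\mathbb{R}$, note each real eigenspace is $\rho$-invariant because $A$ commutes with $\rho(\mathfrak{g})$, and conclude from irreducibility that $A$ is a real scalar; positive-definiteness of both forms then forces the scalar to be positive. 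This replaces the division-algebra case analysis with a single appeal to a standard linear-algebra fact, and it does not even use that $\End(\rho)$ is a division algebra. The only thing the paper's longer route buys is the intermediate facts it proves along the way (e.g.\ that any invariant complex structure $J$ commuting with an invariant inner product is automatically orthogonal, so $B_1(J\cdot,\cdot)$ is a symplectic form), which the authors do not appear to reuse elsewhere; as a proof of the proposition itself, your version is the better one.
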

		
		\begin{proof}
			Let $B_1, B_2 \colon V \times V \to \mathbb{R}$ be two $\rho$-invariant inner products on $V$, i.e., two inner products on $V$ satisfying
			\begin{equation*}
				\forall X \in \mathfrak{g} ,\ \forall v,w \in V \colon \ B_i (\rho(X)v , w) + B_i (v,\rho(X)w) = 0 , \qquad i=1,2 .
			\end{equation*}
			Then, for each $i = 1,2$, $B_i$ defines an isomorphism of representations $\varphi_i$ between $\rho$ and its dual representation $\rho^{*}\colon \mathfrak{g} \to \End_{\mathbb{R}}(V^{*})$:
			\begin{align*}
				\varphi_i \colon \ V \to V^{*},\qquad
				v \mapsto B_i (v , -) .
			\end{align*}
			In particular, $\rho$ is self-dual. Now consider the endomorphism of representations given by $\varphi_1^{-1} \circ \varphi_2 \colon V \to V$. By Schur's lemma, the endomorphism ring of an irreducible representation (over \textit{any} ground field) is a division ring. In particular, the endomorphism ring of $\rho$ is a finite-dimensional associative division algebra over $\mathbb{R}$. By the Frobenius theorem, these are, up to isomorphism, $\mathbb{R}$, $\mathbb{C}$ and $\mathbb{H}$. So we need to consider these three cases separately.

(1) If $\End(\rho) \cong \mathbb{R}$, then there exists $\lambda \in \mathbb{R}$ such that $\varphi_1^{-1} \circ \varphi_2 = \lambda \Id_V$, which implies that~${\varphi_2 = \lambda \varphi_1}$, which in turn means that $B_2 = \lambda B_1$. As both $B_1$, $B_2$ are inner products, we must have that $\lambda > 0$. This completes the proof for the case $\End(\rho) \cong \mathbb{R}$.
				
(2) If $\End(\rho) \cong \mathbb{C}$, then there exists $J\in \End(\rho)$, with $J^2 = -\Id_V$. And any $\varphi \in \End(\rho)$ is of the form $\varphi = a \Id_V + b J$, for some $a,b\in\mathbb{R}$. In particular, $\varphi_2 = a \varphi_1 + b \varphi_1 \circ J$. But this implies that $B_2(-,-) = a B_1(-,-) + b B_1 (J-,-)$. Now we claim that, for every $v,w\in V$, $B_1(Jv,Jw)=B_1(v,w)$. Indeed, define $\tilde{B} \colon V \times V \to \mathbb{R}$ as $\tilde{B}(v,w) = B_1 (Jv,Jw)$. As~${J^2 = -\Id_V}$, $\tilde{B}$ is non-degenerate. And, as $J$ and $B_1$ are $\rho$-invariant and $B_1$ is symmetric, $\tilde{B}$ is $\rho$-invariant and symmetric. Now define $\tilde{\varphi} \colon V \to V^{*}$ by $\tilde{\varphi} (v) = \tilde{B}(v,-)$. Consider the endomorphism of representations $\varphi_{1}^{-1} \circ \tilde{\varphi}$. Then, there exist $c,d \in \mathbb{R}$ such that $\varphi_{1}^{-1} \circ \tilde{\varphi}=c \Id_V + d J$. Hence, for every ${v,w \in V}$, $\tilde{B}(v,w) = c B_1(v,w) + d B_1(Jv,w)$. As $\tilde{B}$, $B_1$ are symmetric, we have that, for every $v,w \in V$, $dB_1(Jv,w) = dB_1(v,Jw)$. Suppose $d \neq 0$. Then, for every $v,w \in V$, $B_1(Jv,w) = B_1(v,Jw)$. In particular, if~${v \neq 0}$, we would have that $B_1(Jv,Jv) = B_1\bigl(v,J^2v\bigr) = -B_1(v,v) < 0$, which contradicts positive-definiteness of $B_1$. Hence, $\tilde{B} = c B_1$, for some $c \in \mathbb{R}$. By positive-definiteness and non-degeneracy, $c > 0$. Moreover, if $v \neq 0$, $B_1(v,v)=B_1\bigl(J^2v,J^2v\bigr) = c^2 B_1(v,v)$. Hence, $c=1$. This completes the proof of the fact that, for every $v,w\in V$, $B_1(Jv,Jw) = B_1(v,w)$. Equivalently, $B_1(J-,-)$ is skew-symmetric.
				
				Now, back to our previous situation. As $B_1$ and $B_2$ are symmetric and $B_1(J-,-)$ is skew-symmetric, $b=0$. And now positive-definiteness implies that $a > 0$, finishing the proof for the case $\End(\rho) \cong \mathbb{C}$.
				
(3) Finally, suppose $\End(\rho) \cong \mathbb{H}$. Then, there exist $J_1,J_2,J_3\in \End(\rho)$ linearly independent, with $J_i^2 = -\Id_V$ for $i=1,2,3$ and $J_1 J_2 J_3 =-\Id_V$, such that any element $\varphi \in \End(\rho)$ is of the form $a \Id_V + b_1 J_1 + b_2 J_2 + b_3 J_3$, for some $a,b_1,b_2,b_3 \in \mathbb{R}$. Hence, in particular, $\varphi_1^{-1} \circ \varphi_2$ is of this form, and thus $B_2(-,-) = a B_1(-,-) + b_1 B_1 (J_1-,-) + b_2 B_1 (J_2-,-) + b_3 B_1 (J_3-,-)$, for some $a,b_1,b_2,b_3 \in \mathbb{R}$. As in the previous case, one shows that, for every~${i \in \{1,2,3\}}$, $B_1(J_i-,-)$ is skew-symmetric. For completeness, let us show it for $J_1$. Define $\tilde{B} \colon V \times V \to \mathbb{R}$ as $\tilde{B}(v,w) = B_1 (J_1v,J_1w)$. As $J_1^2 = -\Id_V$, $\tilde{B}$ is non-degenerate. And, as $J_1$ and $B_1$ are $\rho$-invariant and $B_1$ is symmetric, $\tilde{B}$ is $\rho$-invariant and symmetric. Now define $\tilde{\varphi} \colon V \to V^{*}$ by $\tilde{\varphi} (v) = \tilde{B}(v,-)$. Consider the endomorphism of representations~${\varphi_{1}^{-1} \circ \tilde{\varphi}}$. Then, there exist $c,d_1,d_2,d_3 \in \mathbb{R}$ such that $\varphi_{1}^{-1} \circ \tilde{\varphi}=c \Id_V + d_1 J_1 + d_2J_2 +d_3J_3$. Hence, for every $v,w \in V$, $\tilde{B}(v,w) = c B_1(v,w) + d_1 B_1(J_1v,w) + d_2 B_1(J_2v,w) + d_3 B_1(J_3v,w)$. As $\tilde{B}$ and $B_1$ are symmetric, $d_1 B_1(J_1-,-) + d_2 B_1(J_2-,-) + d_3 B_1(J_3-,-)$ is symmetric. Define $Q= d_1J_1 + d_2J_2 +d_3J_3$, so that $B_1(Q-,-)$ is symmetric. Note that $Q^2 = -\bigl(d_1^2+d_2^2+d_3^2\bigr)\Id_V$. Now, for $0 \neq v \in V$, $B_1(Qv,Qv) = B_1\bigl(v,Q^2v\bigr)=-\bigl(d_1^2+d_2^2+d_3^2\bigr) B_1(v,v) \leq 0$. By positive-definiteness of $B_1$, we must have $d_1^2+d_2^2+d_3^2=0$, and hence \smash{$\tilde{B}(v,w) = c B_1(v,w)$}. Again by positive definiteness of~$B_1$, $c>0$. And, for $0 \neq v \in V$,
\[
B_1(v,v)=B_1\bigl(J_1^2 v, J_1^2v\bigr) = \tilde{B}(J_1 v, J_1 v) = c B_1(J_1v,J_1v) = c \tilde{B}(v,v) = c^2 B_1(v,v),
\]
 so $c=1$. This shows that $B_1(J_1-,J_1-)= B_1(-,-)$ or, equivalently, that $B_1(J_1-,-)$ is skew-symmetric. One can repeat this reasoning with $J_2$ and $J_3$ to show that $B_1(J_2-,-)$ and $B_1(J_3-,-)$ are skew-symmetric. Going back to our previous situation, we have that $b_1 B_1(J_1-,-)\allowbreak + b_2 B_1(J_2-,-) + b_3 B_1(J_3-,-)$ is skew-symmetric. And, as $B_1$, $B_2$ are symmetric, this bilinear form is also symmetric. Hence, it is $0$. Therefore, by positive-definiteness of $B_1$, $b_1 J_1 + b_2J_2 +b_3J_3=0$, showing that $b_1=b_2=b_3=0$, and hence that~${B_2=a B_1}$. And, by positive-definiteness, $a>0$.

This concludes the proof.
		\end{proof}
		
		\begin{Proposition}\label{prop:invmetrics2}
			Let $\mathfrak{g}$ be a finite-dimensional Lie algebra and $(\rho_1,V_1)$, $(\rho_2,V_2)$ two \textit{non-isomorphic self-dual} irreducible real representations of $\mathfrak{g}$. Then, $V_1 \perp V_2$ with respect to any $(\rho_1 \oplus \rho_2)$-invariant inner product on $V_1 \oplus V_2$.
		\end{Proposition}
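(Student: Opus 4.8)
The plan is to isolate the ``off-diagonal'' block of an invariant inner product and kill it with Schur's lemma. Let $B$ be any $(\rho_1 \oplus \rho_2)$-invariant inner product on $V_1 \oplus V_2$, and define the bilinear pairing $\beta \colon V_1 \times V_2 \to \mathbb{R}$ by $\beta(v_1, v_2) = B\big((v_1, 0), (0, v_2)\big)$. Since $B$ restricted to $V_1 \times V_1$ and to $V_2 \times V_2$ clearly poses no issue, proving $V_1 \perp V_2$ amounts exactly to showing $\beta \equiv 0$.

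First I would check that $\beta$ is itself invariant in the appropriate bilinear sense. The action of $X \in \mathfrak{g}$ on $V_1 \oplus V_2$ is the block-diagonal operator $\rho_1(X) \oplus \rho_2(X)$, so invariance of $B$ gives, for all $v_1 \in V_1$ and $v_2 \in V_2$,
\[
\beta(\rho_1(X) v_1, v_2) + \beta(v_1, \rho_2(X) v_2) = B\big((\rho_1(X)v_1, 0), (0, v_2)\big) + B\big((v_1, 0), (0, \rho_2(X) v_2)\big) = 0 .
\]
Next, encode $\beta$ as a linear map $\Phi \colon V_1 \to V_2^{*}$, $\Phi(v_1) = \beta(v_1, -)$. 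With the usual Lie algebra convention for the dual representation, the displayed identity is precisely the statement that $\Phi \circ \rho_1(X) = \rho_2^{*}(X) \circ \Phi$ for every $X \in \mathfrak{g}$, i.e., $\Phi$ is a morphism of $\mathfrak{g}$-representations from $(\rho_1, V_1)$ to the dual representation $(\rho_2^{*}, V_2^{*})$.

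Now I invoke Schur's lemma. The dual of an irreducible representation is irreducible (the annihilator of a subrepresentation of $V_2^{*}$ is a subrepresentation of $V_2$, hence $0$ or $V_2$), so $(\rho_2^{*}, V_2^{*})$ is irreducible. By hypothesis $\rho_2$ is self-dual, so $\rho_2^{*} \cong \rho_2$; combined with $\rho_1 \not\cong \rho_2$ this gives $\rho_1 \not\cong \rho_2^{*}$. A morphism between non-isomorphic irreducible representations vanishes, so $\Phi = 0$, whence $\beta \equiv 0$ and $V_1 \perp V_2$.

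There is no serious obstacle here; the only point requiring attention is the logical role of the self-duality hypothesis. Schur's lemma directly controls morphisms landing in $\rho_2^{*}$, not in $\rho_2$, so ``$\rho_1 \not\cong \rho_2$'' is not literally what is used --- one needs ``$\rho_1 \not\cong \rho_2^{*}$'', and it is exactly the self-duality of $\rho_2$ that bridges the two. (Symmetrically, one could instead use the transpose pairing $V_2 \to V_1^{*}$ together with self-duality of $\rho_1$.) Everything else is routine bookkeeping with the block-diagonal action.
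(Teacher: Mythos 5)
Your proof is correct and follows essentially the same route as the paper: construct the map $V_1 \to V_2^{*}$, $v\mapsto B(v,-)\rvert_{V_2}$, check it is a morphism of representations, and apply Schur's lemma together with the self-duality of $\rho_2$ to force it to vanish. You spell out the invariance check and the logical role of self-duality in somewhat more detail, but the argument is identical.
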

		
		\begin{proof}
			Let $B$ be a $(\rho_1 \oplus \rho_2)$-invariant inner product on $V_1 \oplus V_2$. If $V_1$ is not $B$-orthogonal to~$V_2$, then we get a non-zero morphism of representations $V_1 \to V_2^{*}$, namely $v \mapsto B(v,-)\restr{V_2}$. By Schur's lemma, this would be an isomorphism, which is a contradiction since $V_2$ is self-dual and $V_1$ is not isomorphic to $V_2$.
		\end{proof}

		Finally, we have the result we were after.

		\begin{Theorem}\label{thm:inv_metrics}
			Let $G/H$ be a reductive homogeneous space with reductive decomposition $\mathfrak{g} = \mathfrak{h} \oplus \mathfrak{m}$, where $\mathfrak{g}$ is the Lie algebra of $G$, $\mathfrak{h}$ is the Lie algebra of $H$ and $\mathfrak{m}$ is an $\Ad_H$-invariant complement of $\mathfrak{h}$ in $\mathfrak{g}$. Suppose that the adjoint $($isotropy$)$ representation of $H$ on $\mathfrak{m}$ decomposes as a direct sum of \textit{pairwise non-isomorphic} irreducible components $\mathfrak{m} = \mathfrak{m}_1 \oplus \cdots \oplus \mathfrak{m}_k$. Suppose that $\mathfrak{m}$ admits an $\Ad_H$-invariant inner product. Then, $\Ad_H$-invariant inner products on each irreducible component exist and are unique up to positive scaling, and any $\Ad_H$-invariant inner product on $\mathfrak{m}$ is a positive linear combination of $\Ad_H$-invariant inner products on the irreducible components.
		\end{Theorem}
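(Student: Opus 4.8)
The plan is to reduce everything to the two preceding propositions, the only extra ingredient being the elementary fact that the restriction of a positive-definite invariant bilinear form to an invariant subspace is again positive-definite and invariant. Fix an $\Ad_H$-invariant inner product $B$ on $\mathfrak{m}$, which exists by hypothesis. First I would restrict $B$ to each summand: since $\mathfrak{m}_i$ is $\Ad_H$-invariant, the restriction $B|_{\mathfrak{m}_i \times \mathfrak{m}_i}$ is an $\Ad_H$-invariant bilinear form, and it inherits positive-definiteness from $B$. This already establishes existence of an invariant inner product on each component, and Proposition~\ref{prop:invmetrics1} gives uniqueness up to positive scaling. In particular, exactly as in the proof of that proposition, each $\mathfrak{m}_i$ is self-dual.

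Next I would show that $B$ is ``block-diagonal'' with respect to $\mathfrak{m} = \mathfrak{m}_1 \oplus \cdots \oplus \mathfrak{m}_k$. For $i \neq j$, the restriction of $B$ to $\mathfrak{m}_i \oplus \mathfrak{m}_j$ is an invariant inner product on that subspace, and the two summands are non-isomorphic and (by the previous step) self-dual, so Proposition~\ref{prop:invmetrics2} yields $\mathfrak{m}_i \perp_B \mathfrak{m}_j$. Hence $B = \bigoplus_{i=1}^k B|_{\mathfrak{m}_i}$. Writing $B_i^0$ for a fixed invariant inner product on $\mathfrak{m}_i$ (extended to all of $\mathfrak{m}$ by letting the other summands lie in its radical), Proposition~\ref{prop:invmetrics1} furnishes constants $\lambda_i > 0$ with $B|_{\mathfrak{m}_i} = \lambda_i B_i^0$, so $B = \sum_{i=1}^k \lambda_i B_i^0$, which is the asserted decomposition.

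For the converse inclusion I would simply observe that any $\sum_{i=1}^k \mu_i B_i^0$ with all $\mu_i > 0$ is $\Ad_H$-invariant, being a sum of invariant forms, and is positive-definite, being positive-definite on each $\mathfrak{m}_i$ with the $\mathfrak{m}_i$ mutually orthogonal for it by construction; hence it is an $\Ad_H$-invariant inner product on $\mathfrak{m}$. Together with the previous paragraph this identifies the cone of invariant inner products precisely.

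I do not expect a genuine obstacle here: the substance has been isolated into Propositions~\ref{prop:invmetrics1} and~\ref{prop:invmetrics2}, and the theorem is essentially a bookkeeping combination of them. The one point that deserves a moment's attention is that Proposition~\ref{prop:invmetrics2} requires the summands to be self-dual, which is not among the stated hypotheses of the theorem but is a free by-product of each $\mathfrak{m}_i$ carrying an invariant inner product, noted in the first step.
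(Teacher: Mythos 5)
Your argument is correct and is essentially the paper's own proof, just written out in more detail: restrict the given invariant inner product to each $\mathfrak{m}_i$ to get existence and self-duality, then apply Propositions~\ref{prop:invmetrics1} and~\ref{prop:invmetrics2} for uniqueness and block-orthogonality. The explicit verification of the converse inclusion is a nice touch that the paper leaves implicit.
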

		\begin{proof}
			An $\Ad_H$-invariant inner product on $\mathfrak{m}$ restricts to an $\Ad_H$-invariant inner product on each irreducible component $\mathfrak{m}_i$. Hence, $\mathfrak{m}_i$ is self-dual. Now, apply Propositions \ref{prop:invmetrics1} and \ref{prop:invmetrics2}.
		\end{proof}

		\subsection{Some notation} \label{sec:notation}
		
		\begin{enumerate}\itemsep=0pt
			\item[(1)] If $V$ is a representation of a Lie group $G$, and $H\subseteq G$ is a subgroup, then we shall denote the restricted representation by $V\rvert_H$. We shall use analogous notation for restrictions of Lie algebra representations to Lie subalgebras.
			\item[(2)] Throughout the computations carried out in this paper, we will repeatedly use some matrix notation and identities, taken from \cite[p.\ 9]{AHL}. We will denote by \smash{$E^{(n)}_{i,j}$} (resp.\ \smash{$F^{(n)}_{i,j}$}) the elementary $n \times n$ skew-symmetric (resp.\ symmetric) matrix given by
			\begin{align*}
				E_{i,j}^{(n)}=\begin{blockarray}{r*{4}{ >{}c}}
					& & i & j & \\
					\begin{block}{ r!{\,}[cccc]}
						& & & \vdots & \\
						i & & & -1 & \hdots \\
						j &\hdots & 1& & \\
						& & \vdots & & \\
					\end{block}
				\end{blockarray}
				, \qquad F_{i,j}^{(n)} = \begin{blockarray}{r*{4}{ >{}c}}
					& & i & j & \\
					\begin{block}{ r!{\,}[cccc]}
						& & & \vdots & \\
						i & & & 1 & \hdots \\
						j &\hdots & 1& & \\
						& & \vdots & & \\
					\end{block}
				\end{blockarray} .
			\end{align*}
			By convention, the matrix \smash{$F^{(n)}_{i,i}$} has all the entries equal to zero except for the $(i,i)$ entry, which is $1$. We will denote by $B_0$ the bilinear form on the space of matrices of appropriate size given by
			\begin{equation*}
				B_0(X,Y) \coloneqq - \Re ( \tr (XY) ) ,
			\end{equation*}
			where $\Re(z)$ denotes the real part of $z$ and $\tr(A)$ is the trace of the matrix $A$. Finally, if~$\{e_i\}_i$ is an orthonormal basis for some vector space $V$ with respect to an inner product~$B$, we shall denote by $e_{i,j}:= e_i\wedge e_j$ the standard basis elements for $\mathfrak{so}(V,B) \cong \Lambda^2 V$, sending~${e_i \mapsto e_j}$ and $e_j \mapsto -e_i$.
		\end{enumerate}
		
\subsection[Invariant spin\^{}r structures]{Invariant spin$^{\boldsymbol{r}}$ structures}
		
		Denote by $\SO(n)$ the special orthogonal group, and let $\lambda_n \colon \Spin(n) \to \SO(n)$ be the standard two-sheeted covering. This map induces an isomorphism at the level of Lie algebras, and its inverse $\rho \colon \mathfrak{so}(n) \cong \Lambda^2 \R^n \to \mathfrak{spin}(n) \subseteq \mathbb{C}\mathrm{l}(n)$ is given by $2 e_i \wedge e_j \mapsto e_i \cdot e_j$. If $f$ is any map with codomain $\mathfrak{so}(n)$, we will refer to \smash{$\widetilde{f} \coloneqq \rho \circ f$} as the \textit{spin lift of $f$}.
		
		For $r \in \mathbb{N}$, we define the group
		\[
		\Spin^r(n) \coloneqq (\Spin(n) \times \Spin(r) ) / \mathbb{Z}_2 ,
		\]
		where $\mathbb{Z}_2 = \langle(-1,-1)\rangle \subseteq \Spin(n) \times \Spin(r)$. Note that $\Spin^1(n) = \Spin(n)$, $\Spin^2(n) = \Spin^{\mathbb{C}}(n)$ and $\Spin^3(n) = \Spin^{\mathbb{H}}(n)$, and that there are natural homomorphisms
		\begin{alignat*}{3}
			& \lambda^{r}_n \colon\ \Spin^{r}(n) \to \SO(n) , \qquad&& [\mu,\nu] \mapsto \lambda_n(\mu),& \\
			& \xi^{r}_n \colon\ \Spin^{r}(n) \to \SO(r), \qquad&& [\mu,\nu] \mapsto \lambda_r(\nu).&
		\end{alignat*}
		We recall a topological result that we will use multiple times throughout the text.

		\begin{Proposition}[\cite{gen_spin_AL}]\label{prop:fg}
			The map $\varphi^{r,n} \colon \Spin^r(n) \to \SO(n) \times \SO(r)$ defined by $\lambda^{r}_n \times \xi^{r}_n$ is a~two-sheeted covering. Moreover,
			\begin{enumerate}\itemsep=0pt
				\item[$(1)$] $\varphi^{2,2}_{\sharp}\bigl(\pi_1\bigl( \Spin^2(2)\bigr) \bigr) = \langle(1,\pm 1)\rangle \subseteq \mathbb{Z} \times \mathbb{Z} \cong \pi_1 ( \SO(2) \times \SO(2))$,
				\item[$(2)$] for $n \geq 3$, $\varphi^{2,n}_{\sharp} \bigl( \pi_1 \bigl(\Spin^2(n)\bigr)\bigr) = \langle(1,1)\rangle \subseteq \mathbb{Z}_2 \times \mathbb{Z} \cong \pi_1 ( \SO(n) \times \SO(2))$,
				\item[$(3)$] for $r,n \geq 3$, $\varphi^{r,n}_{\sharp}(\pi_1 (\Spin^r(n) ) ) = \langle (1,1) \rangle \subseteq \mathbb{Z}_2 \times \mathbb{Z}_2 \cong \pi_1 ( \SO(n) \times \SO(r))$,
			\end{enumerate}
			where we always take the identifications $\pi_1 (\SO(n) \times \SO(r)) \cong \pi_1(\SO(n)) \times \pi_1(\SO(r))$.
		\end{Proposition}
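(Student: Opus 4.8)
The plan is to deduce everything from the elementary covering $\lambda_n\times\lambda_r\colon \Spin(n)\times\Spin(r)\to\SO(n)\times\SO(r)$, whose kernel is $K\coloneqq\langle(-1,1),(1,-1)\rangle\cong\mathbb{Z}_2\times\mathbb{Z}_2$. Since $(-1,-1)=(-1,1)\cdot(1,-1)\in K$, the projection $q\colon\Spin(n)\times\Spin(r)\to\Spin^r(n)$ has kernel $\langle(-1,-1)\rangle\subseteq K$, and $\varphi^{r,n}$ is the induced homomorphism, so that $\varphi^{r,n}\circ q=\lambda_n\times\lambda_r$. Then $\varphi^{r,n}$ is a surjective Lie group homomorphism with kernel $K/\langle(-1,-1)\rangle\cong\mathbb{Z}_2$; being a surjective homomorphism of Lie groups with discrete kernel, it is a covering map, necessarily two-sheeted. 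This settles the first assertion and, in passing, shows that $\varphi^{r,n}_{\sharp}$ is injective on fundamental groups.

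For the three computations I would use the same framework. Since $q$ is a connected two-sheeted (hence normal) covering, there is an exact sequence
\[ 1\longrightarrow \pi_1(\Spin(n))\times\pi_1(\Spin(r))\xrightarrow{\ q_\sharp\ } \pi_1(\Spin^r(n))\longrightarrow \mathbb{Z}_2\longrightarrow 1, \]
and the nontrivial coset is represented by $\gamma\coloneqq q\circ\tilde\gamma$ for any path $\tilde\gamma$ in $\Spin(n)\times\Spin(r)$ from $(1,1)$ to $(-1,-1)$. Thus $\pi_1(\Spin^r(n))$ is generated by the image of $q_\sharp$ together with $[\gamma]$, and since $\varphi^{r,n}\circ q=\lambda_n\times\lambda_r$ it remains only to track where $\lambda_n\times\lambda_r$ sends the generators of $\pi_1(\Spin(n))\times\pi_1(\Spin(r))$ and where $\varphi^{r,n}$ sends $[\gamma]$. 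The two inputs I would invoke are: (i) for $k\ge 3$, $\Spin(k)$ is simply connected, and a path in $\Spin(k)$ from $1$ to $-1$ descends under $\lambda_k$ to a loop representing the generator of $\pi_1(\SO(k))\cong\mathbb{Z}_2$ (it is a lift that fails to close up); (ii) writing $\Spin(2)=\U(1)$ with $\lambda_2(z)=z^2$, the generator of $\pi_1(\Spin(2))\cong\mathbb{Z}$ maps to twice the generator of $\pi_1(\SO(2))\cong\mathbb{Z}$, while the half-loop $t\mapsto e^{i\pi t}$ from $1$ to $-1$ maps to the generator of $\pi_1(\SO(2))$.

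Now I would run the cases. In case (1), $n=r=2$: the two generators of $\pi_1(\Spin(2))^2$ go to $(2,0)$ and $(0,2)$, and with $\tilde\gamma(t)=(e^{i\pi t},e^{i\pi t})$ one gets $\varphi^{2,2}\circ\gamma\colon t\mapsto(e^{2\pi i t},e^{2\pi i t})$, i.e.\ the class $(1,1)$; hence the image is $\langle(2,0),(0,2),(1,1)\rangle=\langle(1,1),(1,-1)\rangle=\langle(1,\pm1)\rangle$. In case (2), $n\ge3$, $r=2$: the $\Spin(n)$ factor contributes nothing, and with $\tilde\gamma(t)=(c(t),e^{i\pi t})$ for $c$ a path from $1$ to $-1$ in $\Spin(n)$ we get $\varphi^{2,n}\circ\gamma\colon t\mapsto(\lambda_n(c(t)),e^{2\pi i t})$, of class $(1,1)\in\mathbb{Z}_2\times\mathbb{Z}$ by (i) and (ii); hence the image is $\langle(1,1)\rangle$. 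In case (3), $n,r\ge3$: both $\Spin$-factors are simply connected, and with $\tilde\gamma(t)=(c_n(t),c_r(t))$ from $(1,1)$ to $(-1,-1)$ we get $\varphi^{r,n}\circ\gamma\colon t\mapsto(\lambda_n(c_n(t)),\lambda_r(c_r(t)))$, of class $(1,1)\in\mathbb{Z}_2\times\mathbb{Z}_2$ by (i); hence the image is $\langle(1,1)\rangle$.

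The only genuinely delicate bookkeeping is in case (1): one must note that the factor $2$ coming from $\lambda_2(z)=z^2$ exactly cancels the factor $\tfrac12$ coming from $\gamma$ being a half-loop, so that each $\SO(2)$-factor receives the full generator; the remaining cases are routine chases through the covering-space exact sequence. As a consistency check, injectivity of $\varphi^{r,n}_\sharp$ forces the computed image to be abstractly isomorphic to $\pi_1(\Spin^r(n))$, which is $\mathbb{Z}^2$, $\mathbb{Z}$, and $\mathbb{Z}_2$ in the three cases respectively — matching $\langle(1,\pm1)\rangle$, $\langle(1,1)\rangle\le\mathbb{Z}_2\times\mathbb{Z}$, and $\langle(1,1)\rangle\le\mathbb{Z}_2\times\mathbb{Z}_2$.
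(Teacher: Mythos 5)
This proposition is quoted from \cite{gen_spin_AL} without proof in the present paper, so there is no in-paper argument to compare against; I can only assess yours on its own merits. Your argument is correct and takes the standard covering-space route one would expect: factor $\lambda_n\times\lambda_r$ through the quotient map $q\colon\Spin(n)\times\Spin(r)\to\Spin^r(n)$ to see that $\varphi^{r,n}$ is a surjective Lie group homomorphism with kernel $K/\langle(-1,-1)\rangle\cong\Z_2$ (hence a two-sheeted covering and $\varphi^{r,n}_\sharp$ injective), then compute the image on $\pi_1$ via the short exact sequence for the connected normal covering $q$, using the well-known descriptions of $\lambda_2\colon z\mapsto z^2$ and, for $k\geq 3$, of the nontrivial loop in $\SO(k)$ as the projection of a path from $1$ to $-1$ in $\Spin(k)$.

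One small step in case (2) is left implicit and should be filled in. After noting that $\pi_1(\Spin(n))=0$ for $n\geq 3$, you track only the half-loop class $[\gamma]$, but the exact sequence also produces the generator $q_\sharp(\pi_1(\Spin(2)))\cong\Z$, whose image under $\varphi^{2,n}_\sharp$ is $(0,2)\in\Z_2\times\Z$. One must observe that $(0,2)=2\cdot(1,1)\in\langle(1,1)\rangle$ to conclude that the total image is $\langle(1,1)\rangle$ — exactly the generator you do spell out in case (1). This is a trivial omission (and is in any case caught by the consistency check you run at the end by comparing with the abstract isomorphism type of $\pi_1(\Spin^r(n))$), but it should be stated. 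Otherwise the proof is sound.
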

		These \textit{enlargements} or \textit{twistings} of the spin group give rise to the main players in this paper.
		\begin{Definition}
			Let $M$ be an oriented Riemannian $n$-manifold with principal $\SO(n)$-bundle of positively oriented orthonormal frames $FM$. A \textit{spin$^r$ structure} on $M$ is a reduction of the structure group of $FM$ along the homomorphism $\lambda^r_n$. In other words, it is a pair $(P,\Phi)$ consisting~of
			\begin{itemize}\itemsep=0pt
				\item a principal $\Spin^r(n)$-bundle $P$ over $M$, and
				\item a $\Spin^r(n)$-equivariant bundle homomorphism $\Phi \colon P \to FM$, where $\Spin^r(n)$ acts on $FM$ via $\lambda^r_n$.
			\end{itemize}
			If there is no risk of confusion and $\Phi$ is clear from the context, we shall simply denote such a structure by $P$. The principal $\SO(r)$-bundle associated to $P$ along $\xi^r_n$ is called the \textit{auxiliary bundle} of the spin$^r$ structure, and it is denoted by $\hat{P}$.
			
			If $(P_1,\Phi_1)$ and $(P_2,\Phi_2)$ are spin$^r$ structures on $M$, an \textit{equivalence} of spin$^r$ structures from $(P_1,\Phi_1)$ to $(P_2,\Phi_2)$ is a $\Spin^r(n)$-equivariant diffeomorphism $f \colon P_1 \to P_2$ such that $\Phi_1 = \Phi_2 \circ f$.
			
			If, moreover, $M = G/H$ is a Riemannian homogeneous space, we say that a spin$^r$ structure~${(P,\Phi)}$ on $M$ is \textit{$G$-invariant} if $G$ acts smoothly on $P$ by $\Spin^r(n)$-bundle homomorphisms and $\Phi$ is $G$-equivariant.
		\end{Definition}
		
		$G$-invariant spin$^r$ structures on $G/H$ are in one-to-one correspondence with representation-theoretical data.
		\begin{Theorem}[\cite{gen_spin_AL}]\label{cor:inv_gen_spin}
			Let $G/H$ be an $n$-dimensional oriented Riemannian homogeneous space with $H$ connected and isotropy representation $\sigma \colon H \to \SO(n)$. Then, there is a bijective correspondence between
			\begin{itemize}\itemsep=0pt
				\item $G$-invariant spin$^r$ structures on $G/H$ modulo $G$-equivariant equivalence of spin$^r$ structures, and
				\item Lie group homomorphisms $\varphi \colon H \to \SO(r)$ such that $\sigma \times \varphi \colon H \to \SO(n) \times \SO(r)$ lifts to a~homomorphism $\phi \colon H \to \Spin^r(n)$ along $\lambda_n^r$, modulo conjugation by an element of $\SO(r)$.
			\end{itemize}
			Explicitly, to such a $\varphi$ corresponds the spin$^r$ structure $(P,\Phi)$ with $P = G \times_{\phi} \Spin^r(n)$ and $ \Phi \colon P \to FM \cong G \times_{\sigma} \SO(n)$ given by $[g,x] \mapsto [g,\lambda_{n}^{r}(x)]$.
		\end{Theorem}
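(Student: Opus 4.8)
The strategy is to reduce the statement to the standard description of $G$-invariant objects over $G/H$ by their isotropy data (a Wang-type argument), and then to track carefully the extra datum $\Phi$ and the equivalence relation; the substance of the argument is in getting the latter exactly right.

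First I would recall the general mechanism. A $G$-equivariant principal $K$-bundle $Q \to G/H$ is determined by its fibre over $eH$: since $H = \mathrm{Stab}_G(eH)$ preserves this fibre and commutes there with the right $K$-action, fixing $q_0 \in Q_{eH}$ gives a homomorphism $\phi \colon H \to K$ by $h\cdot q_0 = q_0\cdot\phi(h)$, and $Q \cong G\times_\phi K$ via $[g,k]\mapsto g\cdot q_0\cdot k$; a different choice of $q_0$ changes $\phi$ by $K$-conjugation, and the $G$-equivariant isomorphisms $G\times_{\phi_1}K \to G\times_{\phi_2}K$ covering $\mathrm{id}_{G/H}$ are precisely the maps $[g,k]\mapsto[g,\kappa k]$ with $\kappa\in K$ and $\phi_2 = \kappa\phi_1\kappa^{-1}$. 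Applied with $K = \Spin^r(n)$, this presents the underlying bundle $P$ of any $G$-invariant spin$^r$ structure as $G\times_\phi\Spin^r(n)$.

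Next I would bring in $\Phi$. Under $FM \cong G\times_\sigma\SO(n)$, a $G$- and $\Spin^r(n)$-equivariant map $\Phi\colon G\times_\phi\Spin^r(n)\to FM$ covering $\mathrm{id}$ is pinned down by the image $[e,a_0]$ of $[e,e]$, and well-definedness forces $a_0\,(\lambda^r_n\circ\phi)(h)\,a_0^{-1} = \sigma(h)$. As $\lambda^r_n$ is onto, conjugating $\phi$ by a $\lambda^r_n$-preimage of $a_0$ and simultaneously normalising $\Phi$ to $[g,x]\mapsto[g,\lambda^r_n(x)]$ -- a move that is itself an equivalence of spin$^r$ structures -- lets me assume $\lambda^r_n\circ\phi = \sigma$ on the nose. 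Thus $G$-invariant spin$^r$ structures, before quotienting, correspond to homomorphisms $\phi\colon H\to\Spin^r(n)$ with $\lambda^r_n\circ\phi=\sigma$; setting $\varphi\coloneqq\xi^r_n\circ\phi$, the identity $\varphi^{r,n}\circ\phi = \sigma\times\varphi$ exhibits $\sigma\times\varphi$ as lifting through the covering $\varphi^{r,n}$ of Proposition~\ref{prop:fg}, and conversely any $\varphi$ admitting such a lift yields such a $\phi$. The key point -- and the only place the hypothesis that $H$ is \emph{connected} is used -- is that this lift is \emph{unique}: two homomorphism-lifts differ by a homomorphism $H\to\ker\varphi^{r,n}$, a central $\mathbb{Z}_2$, hence are equal. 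So $\phi\leftrightarrow\varphi$ is a bijection at this stage.

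Finally I would identify the equivalence relation. With both $\Phi_i$ normalised, an equivalence $[g,x]\mapsto[g,\kappa x]$ (so $\phi_2 = \kappa\phi_1\kappa^{-1}$) respects the $\Phi_i$ exactly when $\lambda^r_n(\kappa) = e$, i.e.\ $\kappa\in\ker\lambda^r_n = \{[1,\nu]:\nu\in\Spin(r)\}$; applying $\xi^r_n$ then gives $\varphi_2 = \lambda_r(\nu)\,\varphi_1\,\lambda_r(\nu)^{-1}$, an $\SO(r)$-conjugate of $\varphi_1$, and surjectivity of $\lambda_r$ together with the uniqueness of lifts shows conversely that every $\SO(r)$-conjugate is realised. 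Assembling the pieces yields the claimed bijection together with the explicit model $P = G\times_\phi\Spin^r(n)$, $\Phi([g,x]) = [g,\lambda^r_n(x)]$. I expect the main obstacle to be bookkeeping rather than conceptual: coordinating the freedom to re-normalise $\Phi$ with conjugation of $\phi$, and isolating exactly which equivalences of principal bundles are compatible with the map to $FM$, so that the resulting equivalence relation on the $\varphi$'s comes out neither too coarse nor too fine.
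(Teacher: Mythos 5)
The paper states this theorem as a citation from~\cite{gen_spin_AL} and does not reproduce the proof, so there is no in-paper argument to compare against. On its own terms your proposal is correct and is the natural Wang-type route: the reduction of $G$-invariant principal $K$-bundles to isotropy homomorphisms modulo $K$-conjugation, the normalisation of $\Phi$ by conjugating $\phi$ with a $\lambda^r_n$-preimage of $a_0$, the uniqueness of lifts through the covering $\varphi^{r,n}$ (the one place connectedness of $H$ enters, since $\ker\varphi^{r,n}$ is a central $\mathbb{Z}_2$ and any two lifts then differ by a trivial homomorphism $H\to\mathbb{Z}_2$), and the identification of the residual equivalence relation as $\SO(r)$-conjugation of $\varphi$ via $\ker\lambda^r_n=\{[1,\nu]:\nu\in\Spin(r)\}$ all check out, and together yield the explicit model $P=G\times_{\phi}\Spin^r(n)$, $\Phi([g,x])=[g,\lambda^r_n(x)]$ stated in the theorem.
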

		\begin{Definition} \label{deff:types}
			For an oriented Riemannian homogeneous space $M=G/H$, its \textit{$G$-invariant spin type} $\Sigma(M,G)$ is defined by
			\[
			\Sigma(M,G):= \text{min}\{ r\in \mathbb{N} \mid M \text{ admits a $G$-invariant spin$^r$ structure}\} .
			\]
		\end{Definition}
		
		\subsection{Exterior forms approach to the spin representation} \label{sec:diff_forms_approach}
		
		It is well known -- see, e.g., \cite{LM} -- that, for $n \in \N$, the complexification $\mathbb{C}\mathrm{l}(n)$ of the real Clifford algebra $\Cl(n)$ satisfies
		\begin{equation}\label{eq:clifford_iso}
			\mathbb{C}\mathrm{l}(n) \cong \begin{cases*} \Mat_{2^k} (\mathbb{C}) & if $n=2k$, \\
				\Mat_{2^k} (\mathbb{C}) \oplus \Mat_{2^k} (\mathbb{C}) & if $n=2k+1$.
			\end{cases*}
		\end{equation}
		For $n = 2k$ or $2k+1$, define
		\[ \Sigma_n = \bigl(\mathbb{C}^2\bigr)^{\otimes k} , \]
		and let $s_k \colon \Mat_{2^k} (\mathbb{C}) \to \End_{\C}(\Sigma_n)$ be the standard representation of $\Mat_{2^k} (\mathbb{C})$. The \textit{spin representation} $\Delta_n \colon \mathbb{C}\mathrm{l}(n) \to \End_{\mathbb{C}}(\Sigma_n)$ is defined by
		\begin{equation*}
			\Delta_n \coloneqq \begin{cases*} s_k & if $n=2k$, \\
				s_k \circ \pr_j & if $n=2k+1$,
			\end{cases*}
		\end{equation*}
		where $\pr_j$ is the projection onto the $j$-th factor. Note that, for odd $n$, there are two non-isomorphic irreducible representations of $\C \mathrm{l} (n)$, and we are \textit{choosing} one of them (we shall specify \textit{which} one below). We will also denote by $\Delta_n$ its restriction to $\Spin(n)$ when there is no risk of confusion. This restriction is independent of the choice of representation for odd $n$.
		
		It is useful to have an explicit description of the spin representation which does not use the isomorphism \eqref{eq:clifford_iso}. We will describe one here, which we refer to as the \textit{exterior forms approach} to the spin representation. Similar realizations have appeared, e.g., in \cite{GW09,LM} and in the early supergravity literature (see, e.g., \cite{Gillard_2005}). More details and examples, using our precise conventions, can be found, e.g., in \cite{AHL,jordansasakian}.
		
		Suppose $n = 2k+1$, and let $(e_0,\dots,e_{2k})$ be the standard basis of $\mathbb{R}^n$. Its complexification decomposes as
$
			\mathbb{C}_0 \oplus L \oplus L' $,
		where $\mathbb{C}_0 = \vecspan_{\mathbb{C}}\{u_0 \coloneqq {\rm i} e_0\}$ and
		\begin{equation*}
			L \coloneqq \vecspan_{\mathbb{C}} \left\{ x_j \coloneqq \frac{1}{\sqrt{2}} ( e_{2j-1} - {\rm i} e_{2j} ) \right\}_{j=1}^{k} , \qquad L' \coloneqq \vecspan_{\mathbb{C}} \left\{ y_j \coloneqq \frac{1}{\sqrt{2}} ( e_{2j-1} + {\rm i} e_{2j} ) \right\}_{j=1}^{k} .
		\end{equation*}
		Note that $\dim_{\mathbb{C}}(\Lambda^{\bullet}L') = 2^k$, and $\mathbb{C}\mathrm{l}(n)$ acts on $\Lambda^{\bullet}L'$ by extending
		\[
			x_j \cdot \eta \coloneqq {\rm i}\sqrt{2} x_j \lrcorner \eta , \qquad y_j \cdot \eta \coloneqq {\rm i}\sqrt{2} y_j \wedge \eta , \qquad u_0 \cdot \eta \coloneqq -\eta_{\mathrm{even}} + \eta_{\mathrm{odd}} ,
		\]
		where $\eta_{\mathrm{even}}$ and $\eta_{\mathrm{odd}}$ are, respectively, the even and odd parts of $\eta \in \Lambda^{\bullet} L'$. Hence,
		\begin{equation}\label{eq:def_action}
			e_{2j-1} \cdot \eta \coloneqq {\rm i} ( x_j \lrcorner \eta + y_j \wedge \eta ) ,\qquad e_{2j} \cdot \eta \coloneqq y_j \wedge \eta - x_j \lrcorner \eta , \qquad e_0 \cdot \eta \coloneqq {\rm i} \eta_{\mathrm{even}} -i \eta_{\mathrm{odd}} .
		\end{equation}
		This representation is isomorphic to $\Delta_n$ for $n=2k+1$ (the other possible choice of irreducible representation of $\mathbb{C}\mathrm{l}(n)$ corresponds to letting $e_0$ act by the negative of what is established in~\eqref{eq:def_action}). To obtain it for $n=2k$, repeat all the above ignoring everything with a zero subscript. These representations have an invariant Hermitian product, which we shall denote by $\langle \cdot , \cdot \rangle$, and an associated norm $\abs{\cdot}$, for which the basis
		\[
 \{ y_{j_1, \dots, j_k} \coloneqq y_{j_1} \wedge \cdots \wedge y_{j_k} \mid 0 \leq k \leq n , \, 1 \leq j_1 < \cdots < j_k \leq n \}
		\]
		is orthonormal.\footnote{Here we use the convention that the empty wedge product (i.e., the case $k=0$) is equal to $1$.}
		
		\subsection[Invariant spin\^{}r spinors]{Invariant spin$^{\boldsymbol{r}}$ spinors}
		
		A classical spin structure allows us to build a spinor bundle. Similarly, a spin$^r$ structure naturally induces a family of complex vector bundles as follows:
		\begin{Definition}\label{def:twistedbundle}
			Let $M$ be an $n$-dimensional oriented Riemannian manifold admitting a spin$^r$ structure $(P,\Phi)$. For $m \in \N$ odd, its \textit{$m$-twisted spin$^r$ spinor bundle} is defined by
			\[ \Sigma_{n,r}^m M \coloneqq P \times_{\Delta_{n,r}^{m}} \Sigma_{n,r}^{m} , \]
			with the natural projection to $M$ induced by that of $P$, where
			\[ \Delta_{n,r}^{m} \coloneqq \Delta_n \otimes \Delta_{r}^{\otimes m} , \qquad \Sigma_{n,r}^{m} \coloneqq \Sigma_n \otimes \Sigma_{r}^{\otimes m} , \]
			where $\Delta_{n,r}^{m}$ is viewed as a representation of $\Spin^r(n)$.
		\end{Definition}
		The requirement that $m$ be odd in Definition \ref{def:twistedbundle} comes from the fact that $\Delta_{n,r}^{m}$, which is a~representation of $\Spin(n) \times \Spin(r)$, descends to a representation of $\Spin^r(n)$ if and only if $m$ is odd.
		
		If, moreover, $M = G/H$ is a homogeneous space and $(P,\Phi)$ is $G$-invariant, then there exists a homomorphism $\varphi \colon H \to \SO(r)$ such that $\sigma \times \varphi$ lifts to a map $\phi \colon H \to \Spin^r(n)$ (see Theorem~\ref{cor:inv_gen_spin}), and this bundle takes the form
		\[ \Sigma_{n,r}^m M = G \times_{\Delta_{n,r}^{m} \circ \phi} \Sigma_{n,r}^{m} . \]
		Sections of $\Sigma_{n,r}^m M$ are called \textit{$m$-twisted spin$^r$ spinors} -- if there is no risk of confusion, we will just refer to them as \textit{spin$^r$ spinors} or simply \textit{spinors}. They are identified with $H$-equivariant maps $\psi \colon G \to \Sigma_{n,r}^{m}$, and $G$ acts on the space of spinors by
$
		(g \cdot \psi ) (g') \coloneqq \psi \bigl( g^{-1} g' \bigr)$.
		$G$-invariant spinors correspond, then, to \textit{constant} $H$-equivariant maps $G \to \Sigma_{n,r}^{m}$, which in turn correspond to elements of $\Sigma_{n,r}^{m}$ which are stabilised by $H$. If $H$ is connected, these are just elements of $\Sigma_{n,r}^{m}$ which are annihilated by the differential action of the Lie algebra of $H$. We denote the space of invariant $m$-twisted spin$^r$ spinors by \smash{$( \Sigma_{n,r}^{m} )_{\mathrm{inv}}$}.
		
		\begin{Remark}
			It should be noted that the exterior forms approach to the spin representation described above does \emph{not} in general give an identification of (classical) spinors with globally defined differential forms; the isotropy action on $\Sigma_n$ is not generally equivalent (as representations) to the isotropy action on $\Lambda^{0,\bullet}\mathfrak{m}$. Rather, this realisation of spinors via exterior forms is purely algebraic, and is often non-canonical (i.e., it depends on the choice of basis for $\mathfrak{m}$). Notable exceptions exist in the presence of certain special geometric structures -- see, e.g., \cite[Remark 3.9]{AHL} and the proof of \cite[Theorem 5.10]{jordansasakian}. On the other hand, there are other various constructions (so-called \emph{squaring constructions}) which associate (real) differential forms to spinors
			(see, e.g., \cite{CLS21,LM,Wang}). One common such construction is to associate to a spinor $\psi$ the $k$-form
			\[ \omega_{(k)}(X_1,\dots,X_k):= \Re \langle (X_1\wedge \dots X_k)\cdot \psi, \psi \rangle \qquad \text{for all}\quad X_1,\dots, X_k \in TM, \]
			and it is well known that if $\psi$ is a Killing spinor then $\omega_{(1)}$ (or, more precisely, its dual vector field) is a Killing vector field (see, e.g., \cite[Section 1.5]{BFGK}). It should be noted that one obtains quite often $\omega_{(k)}=0$, even for non-vanishing spinors $\psi$ (see, e.g., \cite[Table 6]{AHL}). In particular, the differential forms associated to an invariant spinor in this manner do not seem to be heavily influenced by its realisation in the exterior form model of the spin representation (which itself may be non-canonical).
		\end{Remark}
		
In a similar spirit to Definition \ref{deff:types}, we make the following definition.
		\begin{Definition}

			For an oriented Riemannian homogeneous space $M=G/H$, the \textit{$G$-invariant spinor type} of $M$ is defined by
			\[
			\sigma(M,G) \coloneqq \min\left\{ r \in \mathbb{N} \, \bigg\rvert \,
			\begin{aligned}
				&M \text{ admits a } G\text{-invariant spin}^r \text{ structure} \\
				&\text{with } (\Sigma_{n,r}^m)_{\mathrm{inv}} \neq 0 \text{ for some odd } m
			\end{aligned}
			\right\} .
			\]
		\end{Definition}
		\begin{Remark}
			The $G$-invariant spinor type $\sigma(M^n,G)$ is well defined, and it satisfies
			$
			1 \leq \sigma(M,G) \leq n$.
			This is because the $G$-invariant $\Spin^n$ structure on $M$ determined by taking~${\varphi \colon H \to \SO(n)}$ to be equal to the isotropy representation always carries a non-zero invariant $1$-twisted spin$^n$ spinor -- see \cite[Proposition 3.3]{EH16}.
		\end{Remark}
		
		The requirement that $r$ be minimal in the definition of the $G$-invariant spinor type is motivated by the next proposition, which shows that passing from a spin$^r$ structure to any spin$^{r'}$ structure ($r'>r$) induced by it via the obvious inclusion $\Spin^r(n) \hookrightarrow \Spin^{r'}(n)$ leads to redundancies. Before stating the proposition, we introduce some terminology which will be useful in describing the relationship between the structures:
		\begin{Definition}
			Let $M^n=G/H$ be an oriented Riemannian homogeneous space. We say that a spin$^r$ structures $P_r$ and a spin$^{r'}$ structure $P_{r'}$ ($r\leq r'$) on $M$ are in the same \emph{lineage} if~${P_{r'} \cong P_r \times_{\iota} \Spin^{r'}(n)}$, where $\iota\colon \Spin^r(n) \hookrightarrow \Spin^{r'}(n)$ is the natural inclusion map induced by the inclusion $\SO(r) \hookrightarrow \SO(r')$ as the lower right-hand $r\times r$ block.
		\end{Definition}
		
		\begin{Proposition}Let $M^n=G/H$ be an oriented Riemannian homogeneous space with connected isotropy group $H$, equipped with a $G$-invariant spin$^r$ structure $P_r$. Furthermore, for any~${r'\geq r}$ consider the invariant spin$^{r'}$ structure $P_{r'}$ in the lineage of $P_r$. If \smash{$\psi \in (\Sigma^m_{n,r})_{\inv}$} is an invariant $m$-twisted spin$^r$ spinor, then it induces an invariant $m'$-twisted spin$^{r'}$ spinor for any~${m'\geq m}$ $(m$, $m'$ odd$)$, i.e., there is an inclusion
			\[
			(\Sigma^m_{n,r})_{\mathrm{inv}} \hookrightarrow \bigl(\Sigma^{m'}_{n,r'}\bigr)_{\mathrm{inv}} \qquad \text{for all}\quad r'\geq r, \quad m'\geq m \quad\text{ $(m$, $m'$ odd$)$}.
			\]
		\end{Proposition}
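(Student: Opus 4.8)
The plan is to reduce the statement to two independent inclusions: one that increases $r$ (keeping $m$ fixed) and one that increases $m$ (keeping $r$ fixed), and then compose them. For both, the key observation is that an invariant $m$-twisted spin$^r$ spinor is, after choosing the homomorphism $\varphi\colon H\to\SO(r)$ and its lift $\phi\colon H\to\Spin^r(n)$ as in Theorem~\ref{cor:inv_gen_spin}, simply an element of $\Sigma_{n,r}^m=\Sigma_n\otimes\Sigma_r^{\otimes m}$ fixed by the image of $\phi$ — equivalently, annihilated by $\diff\phi(\mathfrak h)$ since $H$ is connected. So the whole problem is to exhibit, in each case, an $H$-equivariant linear inclusion of the relevant representation spaces that is compatible with the lifts, and then read off that it sends fixed vectors to fixed vectors.

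First I would handle the step $r\to r'$. Since $P_{r'}$ lies in the lineage of $P_r$, it corresponds (via Theorem~\ref{cor:inv_gen_spin}) to the composite $\varphi'\colon H\xrightarrow{\varphi}\SO(r)\hookrightarrow\SO(r')$, with lift $\phi'=\iota\circ\phi$ where $\iota\colon\Spin^r(n)\hookrightarrow\Spin^{r'}(n)$. The inclusion $\SO(r)\hookrightarrow\SO(r')$ as the lower-right block induces $\Spin(r)\hookrightarrow\Spin(r')$, hence an inclusion of Clifford algebras $\C\mathrm{l}(r)\hookrightarrow\C\mathrm{l}(r')$; under the exterior-forms model of Section~\ref{sec:diff_forms_approach} this is realised concretely by $\Sigma_r\hookrightarrow\Sigma_{r'}$, $\eta\mapsto\eta$ (viewing $\Lambda^\bullet L'_r$ as the subcomplex of $\Lambda^\bullet L'_{r'}$ not involving the extra basis vectors), and this map is $\Spin(r)$-equivariant. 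Tensoring $m$ copies and then with $\Sigma_n$ gives a $\Spin^r(n)$-equivariant inclusion $\Sigma_{n,r}^m\hookrightarrow\Sigma_{n,r'}^m$ (with $\Spin^r(n)$ acting on the target through $\iota$). Precomposing with $\phi$ on both sides, a vector annihilated by $\diff\phi(\mathfrak h)$ in the source is annihilated by $\diff\phi'(\mathfrak h)$ in the target, giving $(\Sigma_{n,r}^m)_{\inv}\hookrightarrow(\Sigma_{n,r'}^m)_{\inv}$.

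Next I would handle $m\to m'$ with $r$ fixed (both odd, so $m'=m+2\ell$). Here one needs an $\Spin(r)$-equivariant inclusion $\Sigma_r^{\otimes m}\hookrightarrow\Sigma_r^{\otimes m'}$; the natural candidate is $v\mapsto v\otimes(\chi)^{\otimes\ell}$ for a suitable $\Spin(r)$-fixed vector $\chi\in\Sigma_r^{\otimes 2}$. Such a $\chi$ exists because $\Sigma_r$ carries a $\Spin(r)$-invariant Hermitian form, equivalently an isomorphism $\Sigma_r\cong\overline{\Sigma_r}^{\,*}$, which produces an invariant element of $\Sigma_r\otimes\overline{\Sigma_r}$; one must be a little careful that one genuinely lands in $\Sigma_r\otimes\Sigma_r$ rather than a conjugate, but since $\Sigma_r$ as a complex $\Spin(r)$-representation is self-dual up to conjugation and the twisted bundle already uses $\Delta_r^{\otimes m}$ (so the relevant factor is the one appearing in the bundle), the invariant pairing on the appropriate space does the job — this is the same mechanism by which the map $[\mu,\nu]\mapsto\cdots$ of Definition~\ref{def:twistedbundle} is well defined for odd $m$. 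Tensoring with $\Sigma_n$ and composing with $\phi$ as before sends invariant spinors to invariant spinors. Composing the two inclusions yields the claimed $(\Sigma_{n,r}^m)_{\inv}\hookrightarrow(\Sigma_{n,r'}^{m'})_{\inv}$ for all $r'\geq r$, $m'\geq m$.

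The main obstacle I anticipate is the $m\to m'$ step: producing a canonical $\Spin(r)$-invariant vector in $\Sigma_r^{\otimes 2}$ and checking it lives in the correct factor (not its complex conjugate) so that $v\mapsto v\otimes\chi$ really is a map into $\Sigma_r^{\otimes m'}$ as used in Definition~\ref{def:twistedbundle}. This is essentially a bookkeeping issue about the reality/quaternionic type of the spin representation $\Delta_r$ in each residue class of $r\bmod 8$, combined with the observation that the invariant Hermitian product on $\Sigma_r$ (mentioned in Section~\ref{sec:diff_forms_approach}) already supplies the needed invariant tensor; once that is pinned down, equivariance and the passage to $H$-fixed vectors are immediate. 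The $r\to r'$ step, by contrast, is routine given the explicit exterior-forms model.
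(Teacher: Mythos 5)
Your proposal matches the paper's argument: both reduce to the two elementary steps $(r,m)\to(r+1,m)$ and $(r,m)\to(r,m')$, handle the first by observing that $\Sigma_{r+1}\rvert_{\mathfrak{spin}(r)}$ contains a copy of $\Sigma_r$ (your exterior-forms inclusion is one concrete realisation of this branching), and handle the second by using self-duality of $\Sigma_r$ to locate a trivial $\mathfrak{h}$-subrepresentation inside $\Sigma_r^{\otimes(m'-m)}$. The only thing worth flagging is that your anticipated obstacle in the $m\to m'$ step is not a real one: $\Sigma_r$ is self-dual as a complex $\Spin(r)$-module (not merely conjugate-self-dual), so the invariant bilinear form, rather than the Hermitian one, directly produces the invariant $\chi\in\Sigma_r\otimes\Sigma_r$ with no bookkeeping about reality type required; this is exactly the fact the paper invokes.
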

		\begin{proof}
			It suffices to prove the result for $r'\in \{r, r+1\}$. Suppose first that $r'=r+1$, and let~${\varphi \colon H\to \SO(r)}$ be an auxiliary homomorphism corresponding to $P_r$ in the sense of Theorem~\ref{cor:inv_gen_spin}. Denoting by $\sigma\colon H\to \SO(n)$ the isotropy representation, we begin by observing that the invariant spin$^{r+1}$ structure in the lineage of $P_r$ is induced by the lift of the homomorphism~${\sigma\times \varphi' \colon H \to \SO(n)\times \SO(r+1)}$ given by the composition of $\sigma \times \varphi \colon H \to \SO(n)\times \SO(r)$ with the inclusion $\SO(n)\times \SO(r)\hookrightarrow \SO(n)\times \SO(r+1)$. In particular, $\mathfrak{h}$ acts on ${\Sigma_{n,r+1}^{m'}= \Sigma_n \otimes \Sigma_{r+1}^{\otimes m'}}$ by the (tensor product action associated to the) lift of $\sigma_*$ on the~$\Sigma_n$ factor and the lift of $\varphi_*$ on the~$\Sigma_{r+1}$ factors. We now split into two cases based on the parity of $r$. Supposing first that $r$ is even, we have \smash{$\Sigma_{r+1}\rvert_{\mathfrak{spin}(r)^{\C}} \simeq \Sigma_{r}$} as $\mathfrak{spin}(r)^{\C}$ representations, and therefore $\Sigma^{m'}_{n,r+1}\rvert_{\mathfrak{h}^{\C}}\simeq \Sigma^{m'}_{n,r} $ as $\mathfrak{h}^{\C}$-representations. Since $m$, $m'$ are both odd we have $m'-m = 2k$ for some $k\geq 0$, and therefore
 \[
 \Sigma_r^{\otimes m'}\rvert_{\mathfrak{spin}(r)^{\C}}\simeq \Sigma_r^{\otimes m} \otimes \underbrace{\Sigma_r \otimes \dots \otimes \Sigma_r }_{2k \text{ copies}}.
\]
 But $\Sigma_r$ is a self-dual representation of $\mathfrak{spin}(r)^{\C}$, hence also a self-dual representation of $\mathfrak{h}^{\C}$, so~$\Sigma_r^{\otimes 2k}$ contains a copy of the trivial $\mathfrak{h}$-representation. The corresponding $H$-representation thus also contains a trivial subrepresentation since $H$ is connected. In particular, there is a~copy of $\Sigma^m_{n,r}\rvert_H$ inside $\Sigma^{m'}_{n,r}\rvert_H$ and the result in this case follows. Suppose now that $r$ is odd, and denote by \smash{$\Sigma_{r+1} = \Sigma^+_{r+1} \oplus \Sigma^-_{r+1}$} the splitting into positive and negative half-spinor spaces. Then we have~${\Sigma_{r+1}^+\rvert_{\mathfrak{spin}(r)^{\C}} \simeq \Sigma_r}$, hence $\Sigma^{m'}_{n,r+1}\rvert_{\mathfrak{h}^{\C}}$ contains a copy of $\Sigma_{n,r}^{m'}$, and the result in this case then follows by the same argument as in the even case. We have shown the result holds for~${r'=r+1}$ (hence for all $r'>r$), and all that remains is to consider the case $r'=r$. The result in this case follows by arguing exactly as above, using the fact that $\Sigma_r$ is a self-dual $\mathfrak{spin}(r)^{\C}$ representation to find a copy of the trivial representation in \smash{$\Sigma_r^{\otimes (m'-m)}$}.
		\end{proof}

		\subsection[Special spin\^{}r spinors]{Special spin$^{\boldsymbol{r}}$ spinors}
		
		In the classical spin setting, it is well known that spinors satisfying certain additional properties carry geometric information about the manifold. Some of the most widely studied examples are the so-called \emph{Riemannian Killing spinors}, which are solutions of the differential equation~${
\nabla^g_X \psi = \lambda X\cdot \psi}$ for all $ X\in TM$
		(here $\nabla^g$ denotes the spinorial connection induced by the Levi-Civita connection, and $\lambda \in \R$). We refer the reader to \cite{Bar93,BFGK,FKMS97,Gru90}, among others, for a detailed exposition of their basic properties and relationship to geometric structures in low dimensions. Another class of important special spinors are the \emph{pure spinors}, which are defined by the algebraic condition that their annihilator inside $T^{\C}M$ (with respect to Clifford multiplication) is a maximal isotropic subbundle. Such spinors correspond, uniquely up to scaling, with orthogonal almost complex structures on the manifold~-- see \cite[Chapter 9]{LM} for details.
		
As in the classical spin case, special spin$^r$ spinors also encode geometric properties. In analogy with pure spinors, we define.

		\begin{Definition}[\cite{ES19}]
			Let $\psi \in \Sigma_{n,r}^m$, $X,Y \in \mathbb{R}^n$ and $1 \leq k<l \leq r$, and let $(\hat{e}_1, \dots, \hat{e}_r)$ be the standard basis of $\mathbb{R}^r$. The real $2$-form \smash{$\eta^{\psi}_{kl}$} and the skew-symmetric endomorphism \smash{$\hat{\eta}_{kl}^{\psi}$} associated to $\psi$ are defined by
			\[
			\eta_{kl}^{\psi} (X,Y) \coloneqq \Re \langle (X \wedge Y ) \cdot (\hat{e}_k \cdot \hat{e}_l) \cdot \psi , \psi \rangle , \qquad
			\hat{\eta}_{kl}^{\psi} (X) \coloneqq \bigl( \eta_{kl}^{\psi} (X, \cdot) \bigr)^{\sharp} ,
			\]
			where $X \wedge Y = X \cdot Y + \langle X,Y \rangle \in \mathfrak{spin}(n)$ and $\hat{e}_k \cdot \hat{e}_l \in \mathfrak{spin}(r)$. We say that $\psi$ is \textit{pure} if
			\[ \bigl( \hat{\eta}_{kl}^{\psi} \bigr)^2 = -\Id_{\mathbb{R}^n} \qquad \text{and} \qquad \bigl( \eta_{kl}^{\psi} + 2 \hat{e}_k \cdot \hat{e}_l \bigr) \cdot \psi = 0 \qquad (\text{only for}\ r \geq 3) ,
			\]
			for all $1 \leq k < l \leq r$. An $m$-twisted spin$^r$ spinor on a manifold is pure if it is pure at every point.
		\end{Definition}
		
		It is clear that an invariant spin$^r$ spinor on a homogeneous space is pure if, and only if, it is pure at one point.
		
		We are also interested in various differential equations that a spin$^r$ spinor might satisfy. Recall that the Levi-Civita connection on a spin manifold naturally induces a connection on the spinor bundle. Similarly, the Levi-Civita connection of a spin$^r$ manifold together with a connection $\theta$ on the auxiliary bundle defines a connection $\nabla^{\theta}$ on each twisted spin$^r$ spinor bundle. There are obvious analogues of the usual special spinorial field equations (including the classical Killing spinor equation mentioned above) to the spin$^r$ setting.

		\begin{Definition}
			Let $\psi$ be a twisted spin$^r$ spinor on $M$ and $\theta$ a connection on the auxiliary bundle of the spin$^r$ structure.
			\begin{enumerate}\itemsep=0pt
				\item[(1)] $\psi$ is \textit{$\theta$-parallel} if $\nabla^{\theta} \psi = 0$;
				\item[(2)] $\psi$ is \textit{$\theta$-Killing} if for all vector fields $X$ one has that $\nabla_X^{\theta} \psi = \lambda X \cdot \psi$, for some constant~${\lambda \in \mathbb{R}}$;
				\item[(3)] $\psi$ is \textit{$\theta$-generalised Killing} if there exists a symmetric endomorphism field $A\in \End(TM)$ such that, for all vector fields $X$, one has $\nabla_X^{\theta} \psi = A(X) \cdot \psi$.
			\end{enumerate}
		\end{Definition}
		We collect here a number of results that relate the existence of special spin$^r$ spinors to geometric properties of the manifold:
		\begin{Theorem}[\cite{EH16,ES19}] \label{thm:herrera}
			Let $M$ be an $n$-dimensional spin$^r$ manifold, and let $\theta$ be a connection on its auxiliary bundle.
			\begin{enumerate}\itemsep=0pt
				\item[$(1)$] If $M$ carries a $\theta$-parallel spinor $\psi$, then the Ricci tensor decomposes as
				\begin{equation*}
					\Ric = \frac{1}{\abs{\psi}^2} \sum_{k < l} \hat{\Theta}_{kl} \circ \hat{\eta}_{kl}^{\psi} ,
				\end{equation*}
				where $\hat{\Theta}_{kl}$ is the skew-symmetric endomorphism associated to the $2$-form on $TM$ given by~${
				\Theta_{kl} (X,Y) \coloneqq \langle \Omega(X,Y) ( \hat{e}_k ) , \hat{e}_l \rangle } $,
				where $\Omega$ is the curvature $2$-form of the connection $\theta$ on the auxiliary bundle.
				\item[$(2)$] If $\theta$ is flat and $M$ carries a $\theta$-Killing spinor, then $M$ is Einstein.
				\item[$(3)$] If $M$ carries a $\theta$-parallel $m$-twisted pure spinor for some $m \in \mathbb{N}$, $r \geq 3$, $r \neq 4$, $n \neq 8$, $n + 4r - 16 \neq 0$ and $n + 8r - 16 \neq 0$, then $M$ is Einstein.
				\item[$(4)$] If $r=2$ and $M$ carries a $\theta$-parallel pure spinor, then $M$ is K\"{a}hler.
				\item[$(5)$] If $r=3$ and $M$ admits a $\theta$-parallel pure spinor, then $M$ is quaternionic K\"{a}hler.
			\end{enumerate}
		\end{Theorem}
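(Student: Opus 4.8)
The statement compiles several essentially independent computations, one for each homogeneous realisation $M = G/H$ appearing in Table~\ref{table:summary}; by the classification of transitive actions of compact simple simply connected groups on projective spaces these are $\mathbb{CP}^k = \SU(k+1)/\mathrm{S}(\U(1)\times\U(k))$, the extra realisation $\mathbb{CP}^{2k+1} = \Sp(k+1)/(\U(1)\times\Sp(k))$, $\mathbb{HP}^n = \Sp(n+1)/(\Sp(1)\times\Sp(n))$, and $\mathbb{OP}^2 = F_4/\Spin(9)$. For each I would carry out the same four-step procedure. First, fix a reductive decomposition $\mathfrak{g} = \mathfrak{h}\oplus\mathfrak{m}$, decompose the isotropy representation $\sigma\colon H\to\SO(\mathfrak{m})$ into irreducibles, and apply Theorem~\ref{thm:inv_metrics} to obtain the finite-parameter family of $G$-invariant metrics (one parameter each for $\mathbb{CP}^k$, $\mathbb{HP}^n$ and $\mathbb{OP}^2$, two for the $\Sp$-realisation of $\mathbb{CP}^{2k+1}$). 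All subsequent work is done for a \emph{generic} member of this family, with an adapted $B$-orthonormal basis of $\mathfrak{m}$ fixed so as to feed the exterior forms model of Section~\ref{sec:diff_forms_approach}.

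Second, enumerate the $G$-invariant spin$^r$ structures via Theorem~\ref{cor:inv_gen_spin}: these correspond to homomorphisms $\varphi\colon H\to\SO(r)$ such that $\sigma\times\varphi$ lifts along $\lambda^r_n$ to $\Spin^r(n)$, modulo $\SO(r)$-conjugation. Whether a given $(\sigma,\varphi)$ lifts is decided by a computation on fundamental groups controlled by Proposition~\ref{prop:fg}: one checks that $(\sigma_\sharp,\varphi_\sharp)(\pi_1(H))$ lies inside $\varphi^{r,n}_\sharp(\pi_1(\Spin^r(n)))$. In each case this cuts the admissible $\varphi$ down drastically -- on $\mathbb{CP}^k$ it forces the canonical $\Spin^c$ structure once $r = 2$, and on the $\Sp$-realisation of $\mathbb{CP}^{2k+1}$ the classical spin structure survives only for $k$ odd -- and it is also how the \emph{lower} bounds on $r$ are proved, e.g.\ that $\mathbb{OP}^2$ admits no $F_4$-invariant spin$^r$ structure carrying an invariant spinor for any $r < 9$.

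Third, and this is the heart of the matter: for an admissible $\varphi$ with spin lift $\phi\colon H\to\Spin^r(n)$, the discussion following Definition~\ref{def:twistedbundle} identifies $(\Sigma_{n,r}^m)_{\mathrm{inv}}$ with the subspace of $\Sigma_n\otimes\Sigma_r^{\otimes m}$ annihilated by $\phi_*(\mathfrak{h})$, using that $H$ is connected. In the exterior forms realisation of $\Delta_n$ the generators of $\mathfrak{h}\subseteq\Lambda^2\mathfrak{m}$ act on $\Sigma_n$ by explicit wedge and contraction operators, and on the $\Sigma_r$ tensor factors through $\varphi_*$, turning the invariance condition into a concrete linear system. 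For $\mathbb{CP}^n$ and $\mathbb{HP}^n$ I would solve it directly; for $\mathbb{OP}^2$ I would instead decompose $\Sigma_{16}\restr{\Spin(9)}$ and $\Sigma_9^{\otimes m}\restr{\Spin(9)}$ using branching rules and count the trivial summands of the tensor product. Running over $r = 1, 2, 3, \dots$ and then the odd integers $m$ produces the minimal pair $(r,m)$ and the value of $\dim(\Sigma_{n,r}^m)_{\mathrm{inv}}$ in each row of Table~\ref{table:summary}, the dashes in the $\mathbb{HP}^{2k}$ row recording the outcome of the same analysis there.

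Fourth, for the minimal spinors thus found I would verify the asserted algebraic and differential properties -- purity by evaluating the forms $\eta^{\psi}_{kl}$ and the endomorphisms $\hat{\eta}^{\psi}_{kl}$ on the adapted basis, parallelism or the (generalised) Killing property by computing $\nabla^{\theta}\psi$ from the Nomizu formula for the canonical invariant connection on $G/H$ together with its induced auxiliary connection $\theta$ -- and then read off the geometry via Theorem~\ref{thm:herrera}: part (4) gives Kähler for the $r = 2$ pure-parallel cases on $\mathbb{CP}^n$ (the Einstein half being the classical fact that the invariant metric is Fubini--Study), part (5) gives quaternionic Kähler for the $r = 3$ pure-parallel case on $\mathbb{HP}^{2k+1}$, and for the $k$-odd $\mathbb{CP}^{2k+1}$ case the (generalised) Killing property yields that the relevant invariant metric is Einstein, with the nearly-Kähler refinement occurring only for $n = 3$ (recovering \cite[p.~146]{BFGK}); one also records precisely when these spinors satisfy the Killing-type equations, giving the uniqueness statement of the introduction. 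I expect the main obstacle to be the $\mathbb{OP}^2$ computation: showing that $(r,m) = (9,3)$ is genuinely minimal and that the invariant space is \emph{exactly} four-dimensional requires tight control of the branching of $\Sigma_{16}\otimes\Sigma_9^{\otimes m}$ under the special embedding $\Spin(9)\hookrightarrow\SO(16)$ given by $\Delta_9$, over a whole range of $(r,m)$, together with the half-spinor (chirality) subtleties of the Clifford algebra in dimension $16$, all interwoven with the lifting obstruction of Proposition~\ref{prop:fg}; the $\mathbb{CP}^n$ and $\mathbb{HP}^n$ cases, although lengthy, amount to bookkeeping in the exterior forms model.
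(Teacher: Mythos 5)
There is a genuine gap: your proposal does not address the statement at all. Theorem~\ref{thm:herrera} is a general result about an arbitrary $n$-dimensional spin$^r$ manifold $M$ with a connection $\theta$ on its auxiliary bundle -- it asserts the Ricci decomposition $\Ric = \frac{1}{\abs{\psi}^2}\sum_{k<l}\hat{\Theta}_{kl}\circ\hat{\eta}^{\psi}_{kl}$ for a $\theta$-parallel spinor, the Einstein property for $\theta$-Killing spinors with flat $\theta$ and for parallel pure spinors under the stated dimension restrictions, and the K\"ahler (resp.\ quaternionic K\"ahler) conclusion for parallel pure spinors when $r=2$ (resp.\ $r=3$). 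It is quoted from \cite{EH16,ES19} and makes no reference to homogeneity, projective spaces, or invariant structures. What you have outlined instead is the proof strategy for the paper's main classification theorem (the Theorem in the introduction summarising Table~\ref{table:summary}): invariant metrics via Theorem~\ref{thm:inv_metrics}, enumeration of invariant spin$^r$ structures via Theorem~\ref{cor:inv_gen_spin} and Proposition~\ref{prop:fg}, computation of $(\Sigma^m_{n,r})_{\mathrm{inv}}$ in the exterior forms model, and verification of purity and parallelism. Worse, your fourth step explicitly invokes Theorem~\ref{thm:herrera} ``to read off the geometry,'' so as an argument for Theorem~\ref{thm:herrera} itself it is circular.

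A proof of the actual statement would have to proceed along entirely different lines, namely the curvature identities for the twisted spinor connection $\nabla^{\theta}$: one Clifford-contracts the curvature of $\nabla^{\theta}$ acting on a $\theta$-parallel spinor to obtain an identity of the form $\Ric(X)\cdot\psi = $ (terms built from the auxiliary curvature $\Omega$ acting through $\mathfrak{spin}(r)$), and then takes real parts of Hermitian products with $\psi$ to extract the decomposition in part (1); part (2) is a Friedrich-type argument for the Killing case with flat auxiliary curvature; parts (3)--(5) require showing that for a pure twisted spinor the endomorphisms $\hat{\eta}^{\psi}_{kl}$ assemble into an almost complex (for $r=2$) or almost quaternionic (for $r=3$) structure, that parallelism of $\psi$ forces these structures to be parallel, and, for (3), an algebraic manipulation of the curvature identities in which the excluded values $r=4$, $n=8$, $n+4r-16=0$, $n+8r-16=0$ appear as denominators. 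None of this machinery appears in your proposal, so the statement remains unproved.
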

		
		If $M=G/H$ is a Riemannian homogeneous space, invariant connections on homogeneous bundles over $M$ are described by algebraic data \cite{nomizu} (see, e.g., \cite{ANT} for a modern treatment).

		\begin{Proposition}\label{prelims:Nomizu_characterization}
			Let $G/H$ be a homogeneous space, and let $\phi \colon H \to K$ be a Lie group homomorphism. There is a one-to-one correspondence between $G$-invariant connections on \smash{$G\times_{\phi} K$} and linear maps $\Uplambda\colon \mathfrak{g}\to \mathfrak{k}$ satisfying\footnote{Note that $\Ad_H$ in condition (2) refers to the restriction of the adjoint representation of $G$ to $H\subseteq G$, whereas $\Ad_K$ refers to the adjoint representation of $K$.}
			\begin{enumerate}\itemsep=0pt
				\item[$(1)$] $\Uplambda(X) = \phi_* (X)$, $X \in \mathfrak{h}$;
				\item[$(2)$] $\Uplambda \circ \Ad_H(h) = \Ad_K(\phi(h) ) \circ \Uplambda$, $ h \in H$.
			\end{enumerate}
			The map $\Uplambda$ corresponding to a connection is called the \textit{Nomizu map} of said connection.
		\end{Proposition}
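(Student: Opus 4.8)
The statement is the classical theorem of Wang on invariant connections in associated bundles; I would prove it by transporting the connection $1$-form to the tautological $G$-orbit inside the total space. Throughout, regard $P := G \times_{\phi} K$ as a principal $K$-bundle over $G/H$, write $A^{*}$ for the fundamental vector field of $A \in \mathfrak{k}$, and recall that a $G$-invariant principal connection on $P$ is exactly a $\mathfrak{k}$-valued $1$-form $\omega$ on $P$ with $\omega(A^{*}) = A$, $R_{k}^{*}\omega = \Ad_{K}(k^{-1})\omega$, and $L_{g}^{*}\omega = \omega$ for all $A \in \mathfrak{k}$, $k \in K$, $g \in G$, where $L_{g}\colon P \to P$ denotes the left $G$-action $[g',k'] \mapsto [gg',k']$.

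\textbf{From a connection to its Nomizu map.} Consider the smooth map $\iota\colon G \to P$, $\iota(g) := [g,e]$, which satisfies $\iota(g_{0}g) = g_{0} \cdot \iota(g)$ and $\iota(gh) = \iota(g) \cdot \phi(h)$ for $g_{0} \in G$, $h \in H$. Given an invariant connection $\omega$, the pullback $\iota^{*}\omega$ is a left-invariant $\mathfrak{k}$-valued $1$-form on $G$, hence determined by its value at the identity, and I would set $\Uplambda := (\iota^{*}\omega)_{e}\colon \mathfrak{g} \to \mathfrak{k}$. Condition $(1)$ is immediate: for $X \in \mathfrak{h}$ one has $\iota_{*}(X) = (\phi_{*}X)^{*}$ at $[e,e]$, so $\Uplambda(X) = \omega\bigl((\phi_{*}X)^{*}\bigr) = \phi_{*}X$. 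For condition $(2)$ I would use that $L_{h} \circ R_{\phi(h)^{-1}}$ fixes the point $[e,e]$ and that $\iota\bigl(h\exp(tX)h^{-1}\bigr) = \bigl(L_{h} \circ R_{\phi(h)^{-1}}\bigr)\bigl(\iota(\exp tX)\bigr)$; differentiating at $t=0$ and invoking $L_{h}^{*}\omega = \omega$ together with $R_{\phi(h)^{-1}}^{*}\omega = \Ad_{K}(\phi(h))\omega$ yields $\Uplambda(\Ad_{h}X) = \Ad_{K}(\phi(h))\Uplambda(X)$.

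\textbf{From a Nomizu map to a connection.} Conversely, given a linear $\Uplambda$ satisfying $(1)$--$(2)$, I would define $\omega$ directly. The map $(v,A) \mapsto \tfrac{d}{ds}\big|_{0}[g\exp(sv), k\exp(sA)]$ is a surjection $\mathfrak{g} \oplus \mathfrak{k} \to T_{[g,k]}P$ whose kernel is the tangent space to the fibre of the quotient map $G \times K \to P$ through $(g,k)$, and I would set
\[
\omega_{[g,k]}\Bigl( \tfrac{d}{ds}\Big|_{0}[g\exp(sv), k\exp(sA)] \Bigr) := \Ad_{K}(k^{-1})\Uplambda(v) + A .
\]
Granting well-definedness, the three defining properties of a connection form and the $G$-invariance $L_{g_{0}}^{*}\omega = \omega$ follow by short direct computations from this formula, and unwinding the definitions shows the assignments $\omega \mapsto \Uplambda$ and $\Uplambda \mapsto \omega$ are mutually inverse, yielding the asserted bijection.

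\textbf{Where the work is.} The only substantive point — and the sole place where conditions $(1)$ and $(2)$ actually enter the reconstruction — is the well-definedness of $\omega$ above, which involves two independent ambiguities. First, $(v,A)$ is pinned down by the tangent vector only modulo $\{(X, -\Ad_{K}(k^{-1})\phi_{*}X) : X \in \mathfrak{h}\}$, and invariance of $\Ad_{K}(k^{-1})\Uplambda(v) + A$ under adding such a pair is precisely condition $(1)$. Second, replacing the representative $(g,k)$ of the class $[g,k]$ by $(gh^{-1}, \phi(h)k)$ changes $(v,A)$ into $(\Ad_{h}v, A)$, and invariance of $\Ad_{K}(k^{-1})\Uplambda(v) + A$ under this substitution is precisely condition $(2)$. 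Once these two checks are in place the remainder is routine bookkeeping, so I would expect the write-up to be short.
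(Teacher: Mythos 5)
The paper does not prove this proposition; it is stated as a citation of Nomizu's classical result (also cited as Wang's theorem on invariant connections) with a reference to \cite{nomizu} and a modern treatment in \cite{ANT}. Your proof is a correct and complete rendering of the standard argument: pulling back the connection form along the tautological map $\iota\colon G \to G\times_\phi K$, $g\mapsto [g,e]$, to extract $\Uplambda$, and in the reverse direction defining $\omega_{[g,k]}$ via the surjection $\mathfrak{g}\oplus\mathfrak{k}\twoheadrightarrow T_{[g,k]}P$ with the kernel and representative-change ambiguities correctly identified and matched to conditions $(1)$ and $(2)$ respectively. The derivation of the equivariance $\Uplambda\circ\Ad_H(h)=\Ad_K(\phi(h))\circ\Uplambda$ from $\iota\bigl(h\exp(tX)h^{-1}\bigr)=\bigl(L_h\circ R_{\phi(h)^{-1}}\bigr)\bigl(\iota(\exp tX)\bigr)$ and the two invariance properties of $\omega$ at the fixed point $[e,e]$ is exactly right, as is the observation that the entire burden of the converse lies in well-definedness. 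No gaps.
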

		
		For the connections of interest in this article, the Nomizu maps are particularly easy to describe:
		
		\begin{Proposition}[{\cite[Theorem~13.1]{nomizu}}]
			Let $(G/H,g)$ be an $n$-dimensional oriented Riemannian homogeneous space, where the metric $g$ corresponds to an invariant inner product $B$ on a~reductive complement $\mathfrak{m}$ of $\mathfrak{h}$. The Nomizu map $\Uplambda \colon \mathfrak{g} \to \mathfrak{so}(\mathfrak{m})$ of the Levi-Civita connection of~$g$ is given by
			\[
			\Uplambda(X)(Y) = \frac{1}{2} [ X,Y ]_{\mathfrak{m}} + U(X,Y) , \qquad X \in \mathfrak{g} , \quad Y \in \mathfrak{m} ,
			\]
			where $U$ is defined by
			\[
			B ( U(X,Y) , W ) = \frac{1}{2} ( B ( [W,X]_{\mathfrak{m}} , Y ) + B ( X , [W,Y]_{\mathfrak{m}} ) ) .
			\]
		\end{Proposition}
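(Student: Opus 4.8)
The plan is to recognise the claimed right-hand side as a legitimate Nomizu map in the sense of Proposition~\ref{prelims:Nomizu_characterization}, applied to the bundle of positively oriented orthonormal frames $FM \cong G \times_{\sigma} \SO(n)$ (so that $\phi$ is the isotropy representation $\sigma \colon H \to \SO(\mathfrak{m})$, $K = \SO(\mathfrak{m})$ and $\mathfrak{k} = \mathfrak{so}(\mathfrak{m},B)$), to check that the resulting $G$-invariant connection is metric and torsion-free, and then to invoke uniqueness of the Levi-Civita connection. Throughout I regard $B$ as the bilinear form on $\mathfrak{g}$ which extends the given inner product on $\mathfrak{m}$ by zero on $\mathfrak{h}$; the only inputs are the symmetry of $B$ on $\mathfrak{m}$, the $\Ad_H$-invariance of $B$ (equivalently skew-symmetry of $\ad_Z\restr{\mathfrak{m}}$ with respect to $B$ for $Z \in \mathfrak{h}$), and the Jacobi identity. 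I first record that the auxiliary map $U \colon \mathfrak{g} \times \mathfrak{m} \to \mathfrak{m}$ is well defined by non-degeneracy of $B$ on $\mathfrak{m}$, and that it is symmetric in its two arguments when both lie in $\mathfrak{m}$ (immediate from its defining identity and the symmetry of $B$).

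Next I would verify the conditions making $\Uplambda(X)(Y) = \frac{1}{2}[X,Y]_{\mathfrak{m}} + U(X,Y)$ a Nomizu map valued in $\mathfrak{so}(\mathfrak{m},B)$. First, expanding $B(\Uplambda(X)Y,Z) + B(\Uplambda(X)Z,Y)$ via the definition of $U$ and using $[Z,Y]_{\mathfrak{m}} + [Y,Z]_{\mathfrak{m}} = 0$ together with the symmetry of $B$, all terms cancel, so $\Uplambda(X) \in \mathfrak{so}(\mathfrak{m},B)$ for every $X \in \mathfrak{g}$; this is precisely the metricity of the associated connection. Second, for $X \in \mathfrak{h}$ the $\Ad_H$-invariance of $B$ gives $B([W,X]_{\mathfrak{m}},Y) = B(W,[X,Y])$ and $B(X,[W,Y]_{\mathfrak{m}}) = 0$, so $U(X,Y) = \frac{1}{2}[X,Y]$ and hence $\Uplambda(X)(Y) = [X,Y] = \sigma_*(X)(Y)$, which is condition~(1) of Proposition~\ref{prelims:Nomizu_characterization}. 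Third, condition~(2) holds because the bracket, the projection $(\cdot)_{\mathfrak{m}}$ and $B$ are all $\Ad_H$-equivariant: performing the change of variables $W \mapsto \Ad_H(h)W$ in the defining identity for $U$ shows $U(\Ad_H(h)X,\Ad_H(h)Y) = \Ad_H(h)\,U(X,Y)$, whence $\Uplambda \circ \Ad_H(h) = \Ad_{\SO(\mathfrak{m})}(\sigma(h)) \circ \Uplambda$.

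By Proposition~\ref{prelims:Nomizu_characterization}, $\Uplambda$ therefore corresponds to a $G$-invariant metric connection $\nabla$ on $G/H$. To conclude, I would use the standard formula that the torsion of the invariant connection with Nomizu map $\Uplambda$ is, at the base point $eH$, the tensor $T(X,Y) = \Uplambda(X)Y - \Uplambda(Y)X - [X,Y]_{\mathfrak{m}}$ for $X,Y \in \mathfrak{m}$; substituting the formula, the bracket contributions collapse and one is left with $T(X,Y) = U(X,Y) - U(Y,X) = 0$ by the symmetry of $U$ on $\mathfrak{m}$. Since every isometry preserves the Levi-Civita connection, $\nabla^g$ is itself $G$-invariant, and by uniqueness of the torsion-free metric connection we get $\nabla = \nabla^g$, so $\Uplambda$ is the Nomizu map of the Levi-Civita connection of $g$. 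The only genuine bookkeeping hazard is condition~(2) together with keeping the conventions for the partially degenerate form $B$ consistent; the remaining steps are short manipulations of the Jacobi identity and the invariance of $B$.
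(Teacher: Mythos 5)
Your proof is correct, but it cannot be compared against an argument in the paper for the simple reason that the paper gives none: this Proposition is quoted verbatim from Nomizu (Theorem~13.1 of the cited article) and is used as a black box. What you have written is essentially the standard textbook verification of Nomizu's result, and it holds up under scrutiny. The three checks you perform are exactly the right ones: skew-symmetry of $\Uplambda(X)$ with respect to $B$ (the bracket terms and the two $U$-contributions cancel in pairs, as you say), condition~(1) of Proposition~\ref{prelims:Nomizu_characterization} for $X\in\mathfrak{h}$ (where the $\Ad_H$-invariance of $B$ gives $U(X,Y)=\tfrac12[X,Y]$ and hence $\Uplambda(X)=\ad(X)\restr{\mathfrak{m}}=\sigma_*(X)$ -- your convention of extending $B$ by zero on $\mathfrak{h}$ is the natural reading of the formula and makes this computation clean), and condition~(2) by the change of variables $W\mapsto \Ad_H(h)W$ in the defining identity for $U$. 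The torsion computation then reduces to the symmetry of $U$ on $\mathfrak{m}\times\mathfrak{m}$, and invariance of the Levi-Civita connection under the isometric $G$-action plus uniqueness of the torsion-free metric connection closes the argument, with injectivity of the Nomizu correspondence identifying $\Uplambda$ as the Nomizu map of $\nabla^g$. The one ingredient you import without proof is the formula $T(X,Y)_{eH}=\Uplambda(X)Y-\Uplambda(Y)X-[X,Y]_{\mathfrak{m}}$ for the torsion of an invariant connection at the base point; this is itself part of Nomizu's theory (and standard, e.g.\ in Kobayashi--Nomizu), so citing it is legitimate, but if you wanted a fully self-contained derivation you would need to establish it from the correspondence of Proposition~\ref{prelims:Nomizu_characterization} as well. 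Compared with simply citing Nomizu as the paper does, your route buys a self-contained and checkable verification at the modest cost of that one auxiliary formula.
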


		The following proposition describes how the correspondence in Proposition~\ref{prelims:Nomizu_characterization} works in the particular situation we are interested in.

		\begin{Proposition}\label{prop:lifted_nomizu}
			Let $(G/H,g)$ be an $n$-dimensional Riemannian homogeneous space equipped with a $G$-invariant spin$^r$ structure $P$. Let $\Uplambda \colon \mathfrak{g} \to \mathfrak{so}(n)$ be the Nomizu map of the Levi-Civita connection of $g$, and let $\Uplambda' \colon \mathfrak{g} \to \mathfrak{so}(r)$ be the Nomizu map of an invariant connection $\theta$ on the associated bundle $\hat{P}$. Let \smash{$\widetilde{\Uplambda}$} be the spin lift of $\Uplambda$ to $\mathfrak{spin}(n)$ and let \smash{$\widetilde{\Uplambda'}$} be the spin lift of~$\Uplambda'$ to~$\mathfrak{spin}(r)$. Then, \smash{$\widetilde{\Uplambda} \otimes \bigl(\widetilde{\Uplambda'}\bigr)^{\otimes m}$} is the Nomizu map of the invariant connection $\nabla^{\theta}$ on the $m$-twisted spin$^r$ spinor bundle. Moreover, if $\psi \in (\Sigma_{n,r}^{m})_{\mathrm{inv}}$ and $\hat{X}$ is the fundamental vector field on~$G/H$ defined by $X \in \mathfrak{m}$, then
			\[
			\bigl( \nabla^{\theta}_{\hat{X}} \psi \bigr)_{eH} = \bigl( \widetilde{\Uplambda} \otimes \widetilde{\Uplambda'}^{\otimes m} \bigr) (X)\cdot \psi .
			\]
			In particular, an invariant $m$-twisted spin$^r$ spinor $\psi$ is $\theta$-parallel if, and only if, it satisfies the equation
\smash{$
			\forall X \in \mathfrak{m} \colon \bigl( \widetilde{\Uplambda} \otimes \widetilde{\Uplambda'}^{\otimes m} \bigr) (X)\cdot \psi = 0$}.
		\end{Proposition}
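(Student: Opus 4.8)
The plan is to show that the claimed tensor-product map satisfies the two conditions of Proposition~\ref{prelims:Nomizu_characterization} characterising Nomizu maps, and then to unwind the general correspondence between invariant connections and covariant derivatives of invariant sections at the base point $eH$.

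First I would recall that by Theorem~\ref{cor:inv_gen_spin} the spin$^r$ structure $P$ is of the form $G\times_\phi \Spin^r(n)$ for a lift $\phi\colon H\to\Spin^r(n)$ of $\sigma\times\varphi$, so that $\Sigma_{n,r}^m M = G\times_{\Delta_{n,r}^m\circ\phi}\Sigma_{n,r}^m$. An invariant connection on the auxiliary bundle $\hat P = G\times_{\varphi}\SO(r)$ corresponds via Proposition~\ref{prelims:Nomizu_characterization} to a map $\Uplambda'\colon\mathfrak g\to\mathfrak{so}(r)$ with $\Uplambda'|_{\mathfrak h}=\varphi_*$ and the $\Ad$-equivariance (2); the Levi-Civita connection corresponds to $\Uplambda\colon\mathfrak g\to\mathfrak{so}(n)$ with $\Uplambda|_{\mathfrak h}=\sigma_*$. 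The key observation is that the connection $\nabla^\theta$ on the $m$-twisted spinor bundle is, by definition, the one induced from the Levi-Civita connection on $FM$ and $\theta$ on $\hat P$ via the representation $\Delta_{n,r}^m = \Delta_n\otimes\Delta_r^{\otimes m}$; under the differential of this representation the connection $1$-forms add, and the differential of $\Delta_{n,r}^m$ on $\mathfrak{spin}(n)\oplus\mathfrak{spin}(r)$ decomposes as a sum of derivations acting on each tensor factor. Combining the spin lift $\widetilde\Uplambda = \rho\circ\Uplambda$ (acting on $\Sigma_n$) with $\widetilde{\Uplambda'} = \rho\circ\Uplambda'$ (acting on each $\Sigma_r$ factor), the total Nomizu map on $\Sigma_{n,r}^m$ is $X\mapsto \widetilde\Uplambda(X)\otimes\Id^{\otimes m} + \sum_{j=1}^m \Id\otimes\Id^{\otimes(j-1)}\otimes\widetilde{\Uplambda'}(X)\otimes\Id^{\otimes(m-j)}$, which is precisely the derivation action denoted $\widetilde\Uplambda\otimes(\widetilde{\Uplambda'})^{\otimes m}$. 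I would verify conditions (1) and (2): on $\mathfrak h$ this map equals $\widetilde{\sigma_*}\otimes(\widetilde{\varphi_*})^{\otimes m} = (\Delta_{n,r}^m\circ\phi)_*$ since $\phi$ is the spin lift of $\sigma\times\varphi$, giving (1); condition (2) follows from the $\Ad$-equivariance of $\Uplambda$ and $\Uplambda'$ together with the $\Spin$-equivariance of the lift $\rho$.

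For the second assertion, I would use the standard fact (see \cite{ANT,nomizu}) that for a homogeneous bundle $G\times_\rho W$ with invariant connection of Nomizu map $\Uplambda_W$, an invariant section corresponds to a fixed vector $w\in W$ and its covariant derivative at $eH$ in the direction $\hat X$, $X\in\mathfrak m$, is $(\nabla_{\hat X}s)_{eH} = \Uplambda_W(X)\cdot w$ (the term $\frac12[X,\cdot]$-type contributions from the reductive splitting being already incorporated into $\Uplambda$). Applying this with $W=\Sigma_{n,r}^m$, $\rho = \Delta_{n,r}^m\circ\phi$, $\Uplambda_W = \widetilde\Uplambda\otimes(\widetilde{\Uplambda'})^{\otimes m}$ and $w=\psi$ gives the displayed formula. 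The final "in particular" is then immediate: $\psi$ is $\theta$-parallel iff $(\nabla^\theta_{\hat X}\psi)_{eH}=0$ for all $X\in\mathfrak m$ — the value at $eH$ suffices by $G$-invariance of both $\psi$ and $\nabla^\theta$ — which by the formula is exactly the stated algebraic equation.

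The main obstacle I anticipate is purely bookkeeping rather than conceptual: one must be careful that the "spin lift" $\rho$ of a Lie algebra map into $\mathfrak{so}$ interacts correctly with tensor products (the derivation property of $\rho$ applied to $\Delta_n\otimes\Delta_r^{\otimes m}$), and that the normalisation $2e_i\wedge e_j\mapsto e_i\cdot e_j$ is consistent between the $\mathfrak{spin}(n)$ and $\mathfrak{spin}(r)$ factors so that no stray factors of $2$ appear. One should also check that the well-definedness of the descent of $\Delta_n\otimes\Delta_r^{\otimes m}$ to $\Spin^r(n)$ (valid precisely for $m$ odd, as noted after Definition~\ref{def:twistedbundle}) is compatible with the lift $\phi$, so that the Nomizu map indeed takes values in the Lie algebra of the structure group $\Spin^r(n)$ and not merely in $\mathfrak{spin}(n)\oplus\mathfrak{spin}(r)$; this is automatic since $\mathfrak{spin}^r(n)\cong\mathfrak{spin}(n)\oplus\mathfrak{spin}(r)$ at the Lie-algebra level.
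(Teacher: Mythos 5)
The paper states Proposition~\ref{prop:lifted_nomizu} \emph{without} proof---it is treated as a direct consequence of the general theory of invariant connections on homogeneous bundles recalled in Proposition~\ref{prelims:Nomizu_characterization} and its references \cite{ANT,nomizu}. Your reconstruction is correct and is exactly the argument the authors evidently had in mind: you verify that the derivation-type map $X\mapsto \widetilde\Uplambda(X)\otimes\Id^{\otimes m}+\sum_j\Id\otimes\cdots\otimes\widetilde{\Uplambda'}(X)\otimes\cdots\otimes\Id$ (which is what the shorthand $\widetilde\Uplambda\otimes(\widetilde{\Uplambda'})^{\otimes m}$ must mean for it to be a Lie-algebra map) satisfies conditions~(1) and~(2) of Proposition~\ref{prelims:Nomizu_characterization}---condition~(1) because on $\mathfrak h$ both $\Uplambda$ and $\Uplambda'$ restrict to $\sigma_*$ and $\varphi_*$ respectively, while $\phi$ is the lift of $\sigma\times\varphi$, and~(2) by combining the equivariance of $\Uplambda$, $\Uplambda'$ with the $\Ad$-equivariance of the spin lift $\rho$. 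You then invoke the standard identity $(\nabla_{\hat X}s)_{eH}=\Uplambda_W(X)\cdot w$ for invariant sections $s\leftrightarrow w$ of homogeneous bundles with invariant connection (this is correct: the directional-derivative term vanishes for a constant equivariant map, leaving only the Nomizu-map contribution), and the ``in particular'' follows from $G$-invariance. Your two precautionary remarks---about the normalisation of the isomorphism $\mathfrak{so}\cong\mathfrak{spin}$, and about the Lie-algebra-level identification $\mathfrak{spin}^r(n)\cong\mathfrak{spin}(n)\oplus\mathfrak{spin}(r)$ making the parity obstruction for $m$ invisible at the infinitesimal level---correctly flag the only genuine bookkeeping pitfalls. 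I see no gap.
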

		
		As we shall see later in the setting of spin$^{\C}$ structures, the second condition in Proposition~\ref{prelims:Nomizu_characterization} is quite restrictive. Indeed, the auxiliary bundles of invariant spin$^{\C}$ structures are principal bundles of the abelian group $\SO(2)$. Hence, the second condition becomes $\Uplambda\circ \Ad_H(h) = \Uplambda$ for all $h\in H$. This will force the kernel of $\Uplambda\rvert_{\mathfrak{m}}$ to be quite large in most of our cases. The following is a useful criterion.

		\begin{Lemma}\label{prelims:Nomizu_criterion}
			Let $G/H$ be a homogeneous space with a reductive decomposition $\mathfrak{g} = \mathfrak{h} \oplus \mathfrak{m}$, and let $\phi \colon H \to K$ be a Lie group homomorphism. Let $H_0\subseteq H$ be the kernel of $\Ad_K \circ \phi\colon H \to \GL(\mathfrak{k})$. If $X \in \text{span}_{\R}[ \mathfrak{h}_0 , \mathfrak{m} ]$, then $\Uplambda(X)=0$ for the Nomizu map $\Uplambda\colon\mathfrak{g}\to\mathfrak{k}$ associated to any invariant connection on $G\times_{\phi} K$.
		\end{Lemma}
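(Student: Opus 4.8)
The plan is to reduce everything to an infinitesimal form of the equivariance property~(2) in Proposition~\ref{prelims:Nomizu_characterization}. First I would differentiate that condition. Since $\mathfrak{m}$ is $\Ad_H$-invariant, the identity $\Uplambda \circ \Ad_H(h) = \Ad_K(\phi(h)) \circ \Uplambda$ holds as an identity of linear maps $\mathfrak{g} \to \mathfrak{k}$ for every $h \in H$. Evaluating at $h = \exp(tZ)$ with $Z \in \mathfrak{h}$ and differentiating at $t = 0$ yields
\[
\Uplambda([Z,Y]) = [\phi_*(Z),\Uplambda(Y)] \qquad \text{for all}\quad Z \in \mathfrak{h},\ Y \in \mathfrak{g}.
\]

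Next I would pin down $\mathfrak{h}_0$, the Lie algebra of $H_0$. The subgroup $H_0 = \ker(\Ad_K \circ \phi)$ is the kernel of a continuous homomorphism, hence closed in $H$ and therefore an embedded Lie subgroup; its Lie algebra is $\{Z \in \mathfrak{h} \mid \Ad_K(\phi(\exp tZ)) = \Id_{\mathfrak{k}}\ \text{for all}\ t \in \R\}$. Differentiating this relation at $t = 0$ gives $\ad_{\mathfrak{k}}(\phi_*(Z)) = 0$ for every $Z \in \mathfrak{h}_0$; in other words, $\phi_*(\mathfrak{h}_0)$ lies in the centre of $\mathfrak{k}$.

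Combining the two observations finishes the proof: for $Z \in \mathfrak{h}_0$ and $Y \in \mathfrak{m}$ we obtain $\Uplambda([Z,Y]) = [\phi_*(Z), \Uplambda(Y)] = 0$, since $\phi_*(Z)$ is central in $\mathfrak{k}$, and then linearity of $\Uplambda$ extends this to all of $\text{span}_{\R}[\mathfrak{h}_0, \mathfrak{m}]$. The argument is short, and the only points that require a moment of care are the passage from the group-theoretic description of $H_0$ to the Lie-algebra statement that $\phi_*(\mathfrak{h}_0)$ is central in $\mathfrak{k}$, and the observation that condition~(2) of Proposition~\ref{prelims:Nomizu_characterization} may be read off on all of $\mathfrak{g}$ (and not merely on $\mathfrak{m}$) thanks to the reductivity of $\mathfrak{g} = \mathfrak{h} \oplus \mathfrak{m}$.
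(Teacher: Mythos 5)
Your proof is correct and follows essentially the same approach as the paper's: both differentiate the equivariance condition~(2) of Proposition~\ref{prelims:Nomizu_characterization} along a direction in $\mathfrak{h}_0$. The paper first uses $\Ad_K(\phi(\gamma(t)))=\Id_{\mathfrak{k}}$ for a curve $\gamma$ in $H_0$ and then differentiates, whereas you differentiate the general identity to get $\Uplambda([Z,Y])=[\phi_*(Z),\Uplambda(Y)]$ and then invoke centrality of $\phi_*(\mathfrak{h}_0)$ in $\mathfrak{k}$; these are the same two steps in the opposite order.
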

		\begin{proof}
			By linearity of $\Uplambda$, it suffices to consider $X=[v,Y]$ for some $v \in \mathfrak{h}_0$ and $Y \in \mathfrak{m}$. Let~${\gamma\colon \R \to H_0}$ be a curve with $\gamma(0)=e_{G}$ and $\gamma'(0) = v$. By Proposition~\ref{prelims:Nomizu_characterization}, the Nomizu map~$\Uplambda$ of any invariant connection on $G\times_{\phi} K$ satisfies $\Uplambda\circ \Ad_H(\gamma(t)) = \Uplambda$, and hence
			\[0= \frac{{\rm d}}{{\rm d}t}\big\rvert_{t=0} \Uplambda(Y) = \frac{\rm d}{{\rm d}t}\big\rvert_{t=0} \Uplambda(\Ad_H(\gamma(t)) Y) = \Uplambda([v,Y]) = \Uplambda(X).\tag*{\qed}
 \] \renewcommand{\qed}{}
		\end{proof}

Finally, we examine the differential equations satisfied by invariant spin$^r$ spinors on symmetric spaces. The following proposition is analogous to the familiar fact in the spin setting that invariant spinors on symmetric spaces are $\nabla^g$-parallel, since the Levi-Civita and the Ambrose--Singer connections coincide.

		\begin{Proposition}\label{symmetricspace_differential_equation}
			Let $(M=G/H,g)$ be a Riemannian symmetric space admitting a $G$-invariant spin$^r$ structure $P$. Let $\mathfrak{g} = \mathfrak{h} \oplus \mathfrak{m}$ be a reductive decomposition such that ${[\mathfrak{m},\mathfrak{m}] \subseteq \mathfrak{h}}$. Let~$E$ be the natural vector bundle associated to the auxiliary bundle $\hat{P}$, and let $\nabla^E$ be the unique $G$-invariant connection on $E$ whose Nomizu map vanishes identically on $\mathfrak{m}$. Let $\nabla:= \nabla^g \otimes \bigl(\nabla^E\bigr)^{\otimes m}$ be the corresponding twisted connection on $\Sigma_{n,r}^m M$.
			
			If $\psi \in \Sigma_{n,r}^m$ is a $G$-invariant spin$^r$ spinor, then $\nabla \psi =0$.
		\end{Proposition}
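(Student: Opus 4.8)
The plan is to combine Proposition~\ref{prop:lifted_nomizu} with the defining property of the connection $\nabla^E$. By Proposition~\ref{prop:lifted_nomizu}, the twisted connection $\nabla = \nabla^g \otimes (\nabla^E)^{\otimes m}$ has Nomizu map $\widetilde{\Uplambda} \otimes \widetilde{\Uplambda'}^{\otimes m}$, where $\Uplambda$ is the Nomizu map of the Levi-Civita connection and $\Uplambda'$ is the Nomizu map of $\nabla^E$; moreover, for $X \in \mathfrak{m}$ and $\psi \in (\Sigma_{n,r}^m)_{\mathrm{inv}}$ one has $(\nabla_{\hat{X}}\psi)_{eH} = (\widetilde{\Uplambda} \otimes \widetilde{\Uplambda'}^{\otimes m})(X) \cdot \psi$. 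Since every tangent vector at $eH$ is $\hat{X}$ for some $X \in \mathfrak{m}$, and since $\psi$ being $G$-invariant means $\nabla\psi$ is determined by its value at $eH$ (it is again $G$-invariant, being the image of an invariant spinor under an invariant connection), it suffices to show $(\widetilde{\Uplambda} \otimes \widetilde{\Uplambda'}^{\otimes m})(X) \cdot \psi = 0$ for all $X \in \mathfrak{m}$.

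First I would observe that, by hypothesis, $\Uplambda'$ vanishes identically on $\mathfrak{m}$, so $\widetilde{\Uplambda'}(X) = 0$ for $X \in \mathfrak{m}$. Hence, expanding the tensor-product action on $\Sigma_n \otimes \Sigma_r^{\otimes m}$ via the Leibniz rule, every term in $(\widetilde{\Uplambda} \otimes \widetilde{\Uplambda'}^{\otimes m})(X)$ that differentiates one of the $\Sigma_r$ factors dies, leaving only the term $\widetilde{\Uplambda}(X) \otimes \Id^{\otimes m}$ acting on the $\Sigma_n$ factor. Next I would use the symmetric-space assumption $[\mathfrak{m},\mathfrak{m}] \subseteq \mathfrak{h}$: the formula for the Levi-Civita Nomizu map gives $\Uplambda(X)(Y) = \tfrac{1}{2}[X,Y]_{\mathfrak{m}} + U(X,Y)$, and for $X, Y \in \mathfrak{m}$ the bracket $[X,Y]$ lies in $\mathfrak{h}$, so $[X,Y]_{\mathfrak{m}} = 0$; the correction term $U(X,Y)$ is built from $[W,X]_{\mathfrak{m}}$ and $[W,Y]_{\mathfrak{m}}$ with $W \in \mathfrak{m}$, which likewise vanish. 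Thus $\Uplambda\rvert_{\mathfrak{m}} = 0$, hence $\widetilde{\Uplambda}\rvert_{\mathfrak{m}} = 0$, and therefore $(\widetilde{\Uplambda} \otimes \widetilde{\Uplambda'}^{\otimes m})(X) \cdot \psi = 0$ for all $X \in \mathfrak{m}$, giving $\nabla\psi = 0$.

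The only point requiring a little care — and the closest thing to an obstacle — is the reduction to checking $\nabla\psi$ at the single point $eH$: one must justify that $\nabla\psi$, viewed as a section of $T^*M \otimes \Sigma_{n,r}^m M$, is itself $G$-invariant, so that its vanishing at $eH$ forces its vanishing everywhere. This follows because $\nabla = \nabla^g \otimes (\nabla^E)^{\otimes m}$ is a $G$-invariant connection (being built from $G$-invariant connections on $G$-homogeneous bundles) and $\psi$ is a $G$-invariant section, so covariant differentiation commutes with the $G$-action; alternatively, one invokes Proposition~\ref{prop:lifted_nomizu} directly, which already packages this reduction into the stated pointwise formula. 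Either way no further computation is needed, and the proposition follows.
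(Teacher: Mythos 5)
Your proposal is correct and takes essentially the same approach as the paper: both observe that the Levi-Civita Nomizu map vanishes on $\mathfrak{m}$ because $[\mathfrak{m},\mathfrak{m}]\subseteq \mathfrak{h}$, combine this with the hypothesis that the Nomizu map of $\nabla^E$ vanishes on $\mathfrak{m}$, and conclude via Proposition~\ref{prop:lifted_nomizu}. The paper's proof is simply terser, leaving the computation of $\Uplambda^g\rvert_{\mathfrak{m}}$ and the invariance-based reduction to the point $eH$ implicit.
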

		\begin{proof}
			With respect to the reductive decomposition $\mathfrak{g} = \mathfrak{h} \oplus \mathfrak{m}$, the Nomizu map associated to the Levi-Civita connection vanishes identically on the reductive complement $\mathfrak{m}$, i.e., $\Uplambda^g\rvert_{\mathfrak{m}} \equiv 0$. The Nomizu map of $\nabla$ then vanishes identically on $\mathfrak{m}$, and the result follows.
		\end{proof}
		
		This result will be useful for several of the cases in our classification, where the limited number of low-dimensional representations of the isotropy groups will force the auxiliary bundles to be isomorphic to familiar tensor (sub)bundles.

		\section{Projective spaces}\label{section:projective}
		
		Oni\v{s}\v{c}ik \cite[p.\ 163]{oni} classified the compact, simple and simply connected Lie groups which act transitively on the projective spaces $\mathbb{CP}^n$, $\mathbb{HP}^n$ and $\mathbb{OP}^2$ -- see also \cite[p.\ 356]{ziller}. We exhibit them in Table~\ref{table:groups}.
		
		\begin{table}[h!]
			\centering
			\begin{tabular}{ c c c }
				\toprule
				Space & $G$ & $H$ \\
				\toprule
				$\mathbb{CP}^{n}$ & $\SU(n+1)$ & $\Special ( \U(1) \times \U(n) )$ \\
				$\mathbb{CP}^{2n+1}$ & $\Sp(n+1)$ & $\U(1) \times \Sp(n)$ \\
				\midrule
				$\mathbb{HP}^{n}$ & $\Sp(n+1)$ & $\Sp(1) \times \Sp(n)$ \\
				\midrule
				$\mathbb{OP}^{2}$ & $F_4$ & $\Spin(9)$ \\
				\bottomrule
			\end{tabular}
			\caption{Compact, simple and simply connected Lie groups $G$ acting transitively with isotropy $H$ on projective spaces -- see, e.g., \cite[p.~356]{ziller}. }
			\label{table:groups}
		\end{table}
		
		\subsection{Hermitian complex projective space}\label{section:her_cpn}
		
		In this section, we consider the complex projective space realised as the quotient
		\[
		\mathbb{CP}^n \cong \faktor{\SU(n+1)}{\Special ( \U(1) \times \U(n) )} ,
		\]
		where
		\begin{equation*}
			\Special (\U(1) \times \U(n) ) = \left\{
			\begin{pmatrix}
				z
				& \rvline & 0 \\
				\hline
				0 & \rvline &
				B
			\end{pmatrix}
			\in \Mat_{n+1} ( \mathbb{C} ) \mid z \in \U(1) , \, B \in \U(n), \, z \det(B) = 1
			\right\} .
		\end{equation*}
		In order to study $\SU(n+1)$-invariant spin$^r$ structures and spinors on this space, we need to establish some notation and properties of the Lie algebras involved. Let us denote by $\mathfrak{h}$ the Lie algebra of $H \coloneqq \Special (\U(1) \times \U(n) )$, and consider the copy of $\SU(n)$ included in $\SU(n+1)$ as the lower right-hand $n \times n$ block. Letting $\mathfrak{h}' \coloneqq \mathfrak{su}(n) \subseteq \mathfrak{su}(n+1)$, we have the decomposition
		\begin{equation*}
			\mathfrak{h} = \mathbb{R}\xi \oplus \mathfrak{h}' \qquad \text{(as Lie algebras), }
		\end{equation*}
		where \smash{$\xi \coloneqq {\rm i} \bigl( -n F_{1,1}^{(n+1)} + \sum_{l=2}^{n+1}F_{l,l}^{(n+1)}\bigr)$} and
		\begin{align*}
			&\mathfrak{h}' = \mathfrak{su}(n) = \vecspan_{\mathbb{R}} \bigl\{ {\rm i} F_{p,q}^{(n+1)}, E_{p,q}^{(n+1)}, {\rm i}\bigl( F_{r,r}^{(n+1)} - F_{r+1,r+1}^{(n+1)} \bigr)\bigr\}_{\begin{subarray}{l} 2 \leq p < q \leq n+1 \\ r = 2 , \dots, n \end{subarray}} .
		\end{align*}
		The isotropy subalgebra $\mathfrak{h} \subseteq \mathfrak{su}(n+1)$ has a reductive complement
		\begin{equation*}
			\mathfrak{m} \coloneqq ( \mathfrak{h})^{\perp_{B_0}} = \vecspan_{\mathbb{R}} \bigl\{ {\rm i} F_{1,p}^{(n+1)}, E_{1,p}^{(n+1)}\bigr\}_{p = 2, \dots, n+1} ,
		\end{equation*}
		and the adjoint representation of $\mathfrak{h}$ on $\mathfrak{m}$ is irreducible. Hence, by Theorem~\ref{thm:inv_metrics}, the $\SU(n+1)$-invariant metrics on $\mathbb{CP}^n$ correspond to the inner products on $\mathfrak{m}$ in the one-parameter family~${
			g_a \coloneqq a B_0 \restr{\mathfrak{m} \times \mathfrak{m}}}$, $ a >0 $,
		and a $g_a$-orthonormal basis of $\mathfrak{m}$ is given by
		\begin{equation*}
			\left\{ e_{2p-1} \coloneqq \frac{{\rm i}}{\sqrt{2a}} F_{1,p+1}^{(n+1)} , e_{2p} \coloneqq \frac{1}{\sqrt{2a}} E_{1,p+1}^{(n+1)} \right\}_{p=1, \dots, n} .
		\end{equation*}
		We take the orientation defined by the ordering $(e_1,e_2,\dots,e_{2n-1},e_{2n})$.

		\subsubsection[Invariant spin\^{}r structures]{Invariant spin$^{\boldsymbol{r}}$ structures}
		
		We are now ready to determine the $\SU(n+1)$-invariant spin type of $\mathbb{CP}^{n}$. By \cite[p.~327]{HS90}, it is clear that $\mathbb{CP}^{n}$ admits an $\SU(n+1)$-invariant spin structure if, and only if, $n$ is odd. Moreover, one has the following.

		\begin{Theorem}\label{thm:her_cpn_str}
			The $\SU(n+1)$-invariant spin$^{\mathbb{C}}$ structures on $\mathbb{CP}^n$ are given by
			\begin{equation*}
				\SU(n+1) \times_{\phi_s} \Spin^{\mathbb{C}}(2n) , \qquad s \in \mathbb{Z} \colon\ n \not\equiv s \text{ \rm mod } 2 ,
			\end{equation*}
			where $\phi_s$ is the unique lift of $\sigma \times \varphi_s$ to $\Spin^{\mathbb{C}}(2n)$, $\sigma \colon H \to \SO(2n)$ is the isotropy representation and $\varphi_s \colon H \to \SO(2) \cong \U(1)$ is given by
\[
\begin{pmatrix}
				z
				& \rvline & 0 \\
				\hline
				0 & \rvline &
				B
			\end{pmatrix} \mapsto \det(B)^s.
\]
 In particular, the $\SU(n+1)$-invariant spin type of $\mathbb{CP}^{n}$ is
			\begin{equation*}
				\Sigma(\mathbb{CP}^n,\SU(n+1)) =
				\begin{dcases}
					1, & n \text{ odd, }\\
					2, & n \text{ even. }
				\end{dcases}
			\end{equation*}
		\end{Theorem}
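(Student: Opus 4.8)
The plan is to use Theorem~\ref{cor:inv_gen_spin}, which reduces the classification of $\SU(n+1)$-invariant spin$^{\mathbb{C}}$ structures to finding the homomorphisms $\varphi \colon H \to \SO(2) \cong \U(1)$ such that $\sigma \times \varphi$ lifts along $\lambda_{2n}^2 \colon \Spin^{\mathbb{C}}(2n) \to \SO(2n)\times \SO(2)$, taken up to conjugation in $\SO(2)$ (which is trivial here since $\SO(2)$ is abelian). First I would compute all Lie group homomorphisms $H \to \U(1)$. Since $H = \Special(\U(1)\times\U(n))$ has $\mathfrak{h} = \mathbb{R}\xi \oplus \mathfrak{su}(n)$ with $\mathfrak{su}(n)$ semisimple, any such homomorphism is trivial on $\mathfrak{h}'$, hence factors through the determinant; one checks that the character group is generated by $\begin{pmatrix} z & 0 \\ 0 & B\end{pmatrix} \mapsto \det(B)$, giving precisely the family $\varphi_s$ for $s \in \mathbb{Z}$. (Here one uses $z\det(B)=1$, so $\det(B)^s$ and $z^{-s}$ are the same character.)

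Next I would determine for which $s$ the product $\sigma\times\varphi_s$ admits a lift to $\Spin^{\mathbb{C}}(2n)$. By Proposition~\ref{prop:fg}(2), since $2n \geq 3$, a homomorphism $f \colon H \to \SO(2n)\times\SO(2)$ lifts along $\varphi^{2,2n}$ if and only if $f_{\sharp}(\pi_1(H)) \subseteq \langle (1,1)\rangle \subseteq \mathbb{Z}_2\times\mathbb{Z} \cong \pi_1(\SO(2n)\times\SO(2))$. So I would compute $\pi_1(H)$ and the images $\sigma_{\sharp}$ and $(\varphi_s)_{\sharp}$ on it. The group $H$ deformation retracts onto its maximal torus structure; concretely $\pi_1(H) \cong \mathbb{Z}$ generated by a loop that maps under $\det$ (or equivalently under the $z$-coordinate) to a generator of $\pi_1(\U(1))$. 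On this generator, $(\varphi_s)_{\sharp}$ is multiplication by $s$ into $\mathbb{Z} = \pi_1(\SO(2))$, while $\sigma_{\sharp}$ lands in $\mathbb{Z}_2 = \pi_1(\SO(2n))$ and — this is the key numerical input — equals $n \bmod 2$, which is exactly the classical statement that $\mathbb{CP}^n$ is spin iff $n$ is odd (the reference \cite[p.~327]{HS90} quoted just before the theorem). The lifting condition $f_\sharp(\pi_1(H)) = \langle(1,1)\rangle$ then forces $s \equiv n+1 \pmod 2$, i.e.\ $n \not\equiv s \bmod 2$; uniqueness of the lift $\phi_s$ follows because $H$ is connected and $\varphi^{2,2n}$ is a covering, so the lift is unique once it exists (no additional $\pi_0$-ambiguity).

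Finally, the spin type statement is immediate from the above plus the definition: $\Spin^1 = \Spin$, so the $r=1$ case is handled by the cited fact that an invariant spin structure exists iff $n$ is odd; and the displayed classification of invariant spin$^{\mathbb{C}}$ structures shows the $r=2$ family is always non-empty (e.g.\ take $s=1$ when $n$ even, $s=0$ when $n$ odd), so $\Sigma(\mathbb{CP}^n,\SU(n+1)) = 1$ for $n$ odd and $= 2$ for $n$ even. The main obstacle I anticipate is the careful bookkeeping of the parity of $\sigma_\sharp$ on the generator of $\pi_1(H)$: one must pin down a generator of $\pi_1(H)$ explicitly (via the torus $\{(z, \operatorname{diag}(\bar z, 1,\dots,1))\}$ or similar), push it through the isotropy representation $\sigma$ on $\mathfrak{m} \cong \mathbb{C}^n$ (the standard representation of $\U(n)$ twisted by the $\U(1)$ factor), and verify that the resulting element of $\pi_1(\SO(2n)) = \mathbb{Z}_2$ is $n \bmod 2$ — matching it against the known second Stiefel--Whitney class computation — and likewise confirm $(\varphi_s)_\sharp$ is multiplication by $s$. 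Everything else is a formal consequence of Proposition~\ref{prop:fg} and Theorem~\ref{cor:inv_gen_spin}.
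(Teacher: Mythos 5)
Your plan follows the paper's proof step for step: reduce via Theorem~\ref{cor:inv_gen_spin}, classify characters $H\cong\U(n)\to\U(1)$ as $B\mapsto\det(B)^s$, and compute $(\sigma\times\varphi_s)_\sharp$ on a generator $\alpha$ of $\pi_1(H)\cong\mathbb{Z}$ to apply Proposition~\ref{prop:fg}(2). The one thing to fix is the anticipated value $\sigma_\sharp(\alpha)=n\bmod 2$: this is incompatible with the very fact you cite, since invariant spinness of $G/H$ means precisely $\sigma_\sharp(\pi_1(H))=0$, which under your value would hold iff $n$ is \emph{even}. The correct parity is $n+1\equiv n-1\pmod 2$. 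Indeed, the paper takes $\alpha(t)=\diag\bigl({\rm e}^{-2\pi{\rm i}t},1,\dots,1,{\rm e}^{2\pi{\rm i}t}\bigr)$ and computes $\sigma(\alpha(t))=\diag\bigl({\rm e}^{2\pi{\rm i}t},\dots,{\rm e}^{2\pi{\rm i}t},{\rm e}^{4\pi{\rm i}t}\bigr)\in\U(n)\subset\SO(2n)$ with $n-1$ entries ${\rm e}^{2\pi{\rm i}t}$, giving total winding $n+1$. With your stated value, the criterion $(\sigma\times\varphi_s)_\sharp(\alpha)\in\langle(1,1)\rangle$ would force $s\equiv n\pmod 2$, the opposite of the theorem; with $n+1\bmod 2$ it correctly gives $s\not\equiv n\pmod 2$. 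Your concluding sentence does jump to the right congruence, but it does not follow from the value you wrote down, so you should redo that bookkeeping. Aside from this parity slip, the rest (triviality of $\SO(2)$-conjugation since $\SO(2)$ is abelian, uniqueness of the lift from connectedness of $H$, the $r=1$ case via \cite{HS90}, non-emptiness of the $r=2$ family for every $n$) is correct and matches the paper's argument.
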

		\begin{proof}
			Note that $H \cong \U(n)$, and that every Lie group homomorphism $\U(n) \to \U(1)$ is of the form $B \mapsto \det(B)^s$, for some $s \in \mathbb{Z}$. The loop $\alpha(t) = \diag \bigl( {\rm e}^{- 2 \pi {\rm i} t} , 1 , \dots , 1 , {\rm e}^{2 \pi {\rm i} t} \bigr)$ generates~${\pi_1 ( H ) \cong \Z}$, and
			\[
			( \sigma \times \varphi_s )_{\sharp} (\alpha) = ( n-1 , s ) \in \pi_1 ( \SO(2n) ) \times \pi_1 ( \SO(2) ) .
			\]
			This can be seen as follows: the image of $\alpha (t)$ under the isotropy representation $\sigma$ is easily seen to be
			\begin{equation*}
				\sigma ( \alpha (t) ) = \diag \bigl( {\rm e}^{2 \pi {\rm i} t} , \dots , {\rm e}^{2 \pi {\rm i} t} , {\rm e}^{4 \pi {\rm i} t} \bigr) \in \U(n) \subseteq \SO(2n) ,
			\end{equation*}
			where ${\rm e}^{2 \pi {\rm i} t}$ appears $n-1$ times. This can be seen using the realisation of $\sigma$ as the action of $H$ on $\mathfrak{m}$ by matrix conjugation.
			
			Hence, by Proposition~\ref{prop:fg}, $\sigma \times \varphi_s \colon H \to \SO(2n) \times \U(1)$ lifts to $\Spin^{\mathbb{C}}(2n)$ if, and only if, $n \not\equiv s$ mod $2$. Finally, as $\U(1)$ is abelian, the representations $\varphi_s$ are pairwise non-equivalent. The result now follows from Theorem~\ref{cor:inv_gen_spin}.
		\end{proof}
		
		\subsubsection[Invariant spin\^{}r spinors]{Invariant spin$^{\boldsymbol{r}}$ spinors}\label{section:her_cpn:spinors}
		
		The classical spin case $r=1$ does not yield any non-trivial invariant spinors, as we show in the following theorem.
		\begin{Theorem} \label{thm:her_classical_spinors}
			For $n$ odd, there are no non-trivial $\SU(n+1)$-invariant spinors on $\mathbb{CP}^{n}$.
		\end{Theorem}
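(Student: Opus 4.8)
The plan is to show that no non-zero element of $\Sigma_{2n}$ is even fixed by the one-dimensional subalgebra $\R\xi\subseteq\mathfrak h$. For any $\SU(n+1)$-invariant spin structure on $\mathbb{CP}^n$ (which, as recalled above, exists precisely when $n$ is odd), an invariant spinor is by definition an element of $\Sigma_{2n}$ fixed by $H\cong\U(n)$; since $H$ is connected this is the same as an element annihilated by $\widetilde{\sigma_*}(\mathfrak h)$, where $\sigma_*\colon\mathfrak h\to\mathfrak{so}(2n)$ is the differential of the isotropy representation and $\widetilde{\sigma_*}=\rho\circ\sigma_*$ is its spin lift (note that for $r=1$ the lift $\phi$ satisfies $\phi_*=\widetilde{\sigma_*}$, regardless of which invariant spin structure is chosen). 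In particular such a spinor must be annihilated by $\widetilde{\sigma_*}(\xi)$, so it suffices to prove that $\widetilde{\sigma_*}(\xi)$ acts on $\Sigma_{2n}$ with trivial kernel.

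First I would compute $\sigma_*(\xi)=\ad(\xi)\restr{\mathfrak m}$ in the $g_a$-orthonormal basis $(e_1,\dots,e_{2n})$. Writing $\xi={\rm i}\,\diag(-n,1,\dots,1)$ and using the explicit generators $e_{2p-1}=\tfrac{{\rm i}}{\sqrt{2a}}F^{(n+1)}_{1,p+1}$, $e_{2p}=\tfrac1{\sqrt{2a}}E^{(n+1)}_{1,p+1}$ of $\mathfrak m$, a one-line bracket computation gives $[\xi,e_{2p-1}]=-(n+1)e_{2p}$ and $[\xi,e_{2p}]=(n+1)e_{2p-1}$, i.e.
\[
\sigma_*(\xi)=-(n+1)\sum_{p=1}^n e_{2p-1}\wedge e_{2p}\in\mathfrak{so}(2n)
\]
(conceptually, $\xi$ generates the circle acting on $\mathfrak m\cong\C^n$ by complex scalar multiplication, so $\sigma_*(\xi)$ is a non-zero multiple of the associated complex structure). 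Applying the spin lift $\rho$, which sends $e_i\wedge e_j\mapsto\tfrac12 e_i\cdot e_j$, then yields $\widetilde{\sigma_*}(\xi)=-\tfrac{n+1}{2}\sum_{p=1}^n e_{2p-1}\cdot e_{2p}\in\mathfrak{spin}(2n)$.

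Next I would evaluate this Clifford element on the orthonormal basis $\{y_{j_1,\dots,j_\ell}\}$ of $\Sigma_{2n}=\Lambda^\bullet L'$. Using the action formulas \eqref{eq:def_action}, the contraction identity $x_p\lrcorner(y_p\wedge\eta)=\eta-y_p\wedge(x_p\lrcorner\eta)$, and the facts that $x_p\lrcorner y_q=\delta_{pq}$ and $L$, $L'$ are isotropic, a short calculation shows that each $e_{2p-1}\cdot e_{2p}$ is diagonal in this basis: writing $y_S:=y_{j_1}\wedge\dots\wedge y_{j_\ell}$ for $S=\{j_1<\dots<j_\ell\}\subseteq\{1,\dots,n\}$, one finds $e_{2p-1}\cdot e_{2p}\cdot y_S={\rm i}\,y_S$ when $p\notin S$ and $e_{2p-1}\cdot e_{2p}\cdot y_S=-{\rm i}\,y_S$ when $p\in S$. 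Summing over $p$,
\[
\widetilde{\sigma_*}(\xi)\cdot y_S=\frac{{\rm i}(n+1)}{2}\,(2|S|-n)\,y_S .
\]
Hence $\ker\widetilde{\sigma_*}(\xi)$ is spanned by those $y_S$ with $|S|=n/2$; as $n$ is odd there are none, so $\widetilde{\sigma_*}(\xi)$ is injective on $\Sigma_{2n}$ and there are no non-zero invariant spinors.

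The only delicate part is the bookkeeping in the exterior-forms model of the third paragraph — carefully tracking the signs in the wedge/contraction identities and confirming that each $e_{2p-1}\cdot e_{2p}$ really is diagonal with the stated eigenvalues $\pm{\rm i}$. The remaining steps (the reduction to the $\xi$-action, the bracket computation giving $\sigma_*(\xi)$, and the final parity obstruction $|S|=n/2$) are immediate.
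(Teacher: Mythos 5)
Your proof is correct and follows essentially the same route as the paper: compute the bracket action of $\xi$, express $\widetilde{\sigma}_*(\xi)$ in $\mathfrak{spin}(2n)$, evaluate it on the basis $y_S$ of the exterior-forms model, and observe that the eigenvalue $\tfrac{{\rm i}(n+1)}{2}(2|S|-n)$ can only vanish if $n$ is even. Your expression $\widetilde{\sigma}_*(\xi) = -\tfrac{n+1}{2}\sum_p e_{2p-1}\cdot e_{2p}$ is the same as the paper's $\tfrac{n+1}{2}\sum_p e_{2p}\cdot e_{2p-1}$ after applying anticommutativity, so the two agree throughout.
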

		\begin{proof}
			We need the explicit expression of the action of $\xi \in \mathfrak{h}$ on $\mathfrak{m}$. Letting $\sigma \colon H \to \SO(2n)$ be the isotropy representation and $\widetilde{\sigma}$ its lift to $\Spin(2n)$, and, e.g., using the commutation relations in \cite[p.\ 9]{AHL}, one can readily see that, for each $p = 1, \dots, n$,
			\begin{equation*}
				\ad(\xi)\restr{\mathfrak{m}} (e_{2p}) = [\xi,e_{2p} ]_{\mathfrak{m}} = (n+1) e_{2p-1} , \qquad \ad(\xi)\restr{\mathfrak{m}} (e_{2p-1}) = [\xi,e_{2p-1} ]_{\mathfrak{m}} = - (n+1) e_{2p} .
			\end{equation*}
			Hence,
			\begin{equation*}
				\sigma_* (\xi) = \ad(\xi)\restr{\mathfrak{m}} = (n+1) \sum_{p=1}^{n} e_{2p}\wedge e_{2p-1} \in \mathfrak{so}(2n) ,
			\end{equation*}
			and the spin lift is given by
			\begin{equation*}
				\widetilde{\sigma}_*(\xi) = \frac{n+1}{2} \sum_{p=1}^{n} e_{2p} \cdot e_{2p-1} \in \mathfrak{spin}(2n) \subseteq \mathbb{C}\mathrm{l}(2n) .
			\end{equation*}
			A direct computation using \eqref{eq:def_action} shows that, for each $1 \leq k \leq n$ and $1 \leq j_1 < \dots < j_k \leq n$,
			\[
				\widetilde{\sigma}_*(\xi) \cdot ( y_{j_1} \wedge \cdots \wedge y_{j_k} ) = \frac{i(n+1)}{2}(2k-n) y_{j_1} \wedge \cdots \wedge y_{j_k} .
			\]
			From this we observe that, if $n$ is odd, there are no non-trivial invariant spinors.
		\end{proof}
		
		The fact that no non-trivial invariant spinors exist motivates the investigation of spin$^{\C}$ spinors.
		\begin{Theorem}\label{thm:inv_gen_cpn}
			For $n,s \in \mathbb{N}$ with $n \not\equiv s$ {\rm mod} $2$, the space of $\SU(n+1)$-invariant $1$-twisted spin$^{\mathbb{C}}$ spinors on $\mathbb{CP}^n$ associated to the spin$^{\mathbb{C}}$ structure $\SU(n+1) \times_{\phi_s} \Spin^{\mathbb{C}}(2n)$ is given by
			\begin{equation*}
				\bigl(\Sigma_{2n,2}^{1}\bigr)_{\mathrm{inv}} =
				\begin{dcases}
					\vecspan_{\mathbb{C}} \{ 1 \otimes \hat{1} , (y_1 \wedge \cdots \wedge y_n) \otimes \hat{y}_1 \}, & s = n+1 , \\
					\vecspan_{\mathbb{C}} \{ (y_1 \wedge \cdots \wedge y_n) \otimes \hat{1} , 1 \otimes \hat{y}_1 \}, & s = -(n+1) , \\
					0 ,& \text{otherwise} .
				\end{dcases}
			\end{equation*}
			In particular, the $\SU(n+1)$-invariant spinor type of $\mathbb{CP}^n$ is
$
			\sigma (\mathbb{CP}^n, \SU(n+1)) = 2 $.
		\end{Theorem}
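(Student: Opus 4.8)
The plan is to reduce the problem to a finite-dimensional representation-theoretic computation and then to diagonalise. Since $H \cong \U(n)$ is connected, $\bigl(\Sigma_{2n,2}^{1}\bigr)_{\mathrm{inv}}$ is the subspace of $\Sigma_{2n,2}^1 = \Sigma_{2n} \otimes \Sigma_2$ annihilated by the differential action of $\mathfrak{h}$ through $(\phi_s)_*$, whose two components are the spin lift $\widetilde{\sigma}_*$ of the isotropy representation (acting on the $\Sigma_{2n}$ factor) and the spin lift $\widetilde{(\varphi_s)_*}$ of the differential of $\varphi_s$ (acting on the $\Sigma_2$ factor), so that $X \in \mathfrak h$ acts on a decomposable tensor $u \otimes v$ by the Leibniz rule. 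Decompose $\mathfrak h = \R\xi \oplus \mathfrak h'$ into the centre $\R\xi$ and the derived subalgebra $\mathfrak h' = \mathfrak{su}(n)$. Since $\varphi_s$ factors through $\det$, which is trivial on $\SU(n)$, the subalgebra $\mathfrak h'$ acts only on the $\Sigma_{2n}$ factor, so
\[
\bigl(\Sigma_{2n} \otimes \Sigma_2\bigr)^{\mathfrak h'} = (\Sigma_{2n})^{\mathfrak{su}(n)} \otimes \Sigma_2 ,
\]
and it remains to identify $(\Sigma_{2n})^{\mathfrak{su}(n)}$ and then to extract the $\xi$-invariant part of this space ($\xi$ preserves it since it is central in $\mathfrak h$).

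The first step -- and the main one -- is to show that $(\Sigma_{2n})^{\mathfrak{su}(n)} = \vecspan_{\mathbb{C}}\{1,\, y_1 \wedge \cdots \wedge y_n\}$, a two-dimensional space. Conceptually, for the inclusion $\U(n) \subseteq \SO(2n)$ the spin module restricts to $\Lambda^{\bullet}\mathbb{C}^n$ up to a $\det^{\pm 1/2}$-twist (see, e.g., \cite{LM}), and restricting further to $\SU(n)$ removes the twist, so $\Sigma_{2n}|_{\mathfrak{su}(n)} \cong \bigoplus_{k=0}^{n} \Lambda^k \mathbb{C}^n$ with $\Lambda^k \mathbb{C}^n$ irreducible and trivial exactly for $k \in \{0,n\}$; in the exterior-forms model the two trivial summands are $\mathbb{C}\cdot 1$ and $\mathbb{C}\cdot (y_1\wedge\cdots\wedge y_n)$, using that $L'$ plays the role of $\mathbb{C}^n$ for $\mathfrak{su}(n)$. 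More in the spirit of the computations in this paper, one can instead compute the spin lifts $\widetilde{\sigma}_*(X)$ of the spanning set $\bigl\{{\rm i}F^{(n+1)}_{p,q},\, E^{(n+1)}_{p,q},\, {\rm i}\bigl(F^{(n+1)}_{r,r} - F^{(n+1)}_{r+1,r+1}\bigr)\bigr\}$ of $\mathfrak{su}(n)$ using the commutation relations of \cite{AHL} and \eqref{eq:def_action}, and verify directly that $1$ and $y_1\wedge\cdots\wedge y_n$ span their common kernel. Writing $\Sigma_2 = \vecspan_{\mathbb{C}}\{\hat{1}, \hat{y}_1\}$ in the $r=2$ exterior-forms model, the invariant spinors then lie in the four-dimensional space $\vecspan_{\mathbb{C}}\{1,\, y_1\wedge\cdots\wedge y_n\} \otimes \vecspan_{\mathbb{C}}\{\hat{1}, \hat{y}_1\}$.

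The second step is pure eigenvalue bookkeeping, since $\xi$ acts diagonally on these four monomials. From the computation in the proof of Theorem~\ref{thm:her_classical_spinors}, $\widetilde{\sigma}_*(\xi)$ acts on $1$ by $-\frac{{\rm i}}{2}n(n+1)$ and on $y_1\wedge\cdots\wedge y_n$ by $+\frac{{\rm i}}{2}n(n+1)$. On the $\Sigma_2$ factor, the $\U(n)$-block of $\exp(t\xi)$ is ${\rm e}^{{\rm i}t}I_n$, so $\varphi_s(\exp(t\xi)) = {\rm e}^{{\rm i}nst}$; hence the differential of $\varphi_s$ sends $\xi$ to $ns\,\hat{e}_1 \wedge \hat{e}_2 \in \mathfrak{so}(2)$, whose spin lift $\frac{ns}{2}\,\hat{e}_1\cdot\hat{e}_2$ acts, via \eqref{eq:def_action}, on $\hat{1}$ by $+\frac{{\rm i}}{2}ns$ and on $\hat{y}_1$ by $-\frac{{\rm i}}{2}ns$. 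By the Leibniz rule, $\xi$ scales $1\otimes\hat{1}$, $1\otimes\hat{y}_1$, $(y_1\wedge\cdots\wedge y_n)\otimes\hat{1}$ and $(y_1\wedge\cdots\wedge y_n)\otimes\hat{y}_1$ by $\frac{{\rm i}}{2}n(s-(n+1))$, $-\frac{{\rm i}}{2}n(s+(n+1))$, $\frac{{\rm i}}{2}n(s+(n+1))$ and $\frac{{\rm i}}{2}n((n+1)-s)$ respectively, and the three cases in the statement are read off directly. For the spinor type: $r=1$ is impossible because $\mathbb{CP}^n$ carries no $\SU(n+1)$-invariant spin structure when $n$ is even (Theorem~\ref{thm:her_cpn_str}) and no non-trivial invariant spinors when $n$ is odd (Theorem~\ref{thm:her_classical_spinors}); and $r=2$ is realised by the structure $\SU(n+1)\times_{\phi_{n+1}}\Spin^{\mathbb{C}}(2n)$ -- which exists since $s=n+1$ has the opposite parity to $n$ -- together with the non-zero invariant spinor $1\otimes\hat{1}$. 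Hence $\sigma(\mathbb{CP}^n,\SU(n+1)) = 2$.

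The main obstacle is the first step: showing that the $\mathfrak{su}(n)$-invariants inside $\Sigma_{2n}$ form precisely a two-dimensional space. Once this is established, everything else is short diagonal bookkeeping. A minor but necessary point is to fix all sign and normalisation conventions (the identification $\SO(2)\cong\U(1)$, the spin lift $\rho$, and the Clifford action \eqref{eq:def_action}) consistently, so that the two non-zero cases attach to the tensor factors exactly as stated; with the conventions of this paper they come out as listed.
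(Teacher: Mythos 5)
Your proposal is correct and takes essentially the same route as the paper: decompose $\mathfrak{h}=\R\xi\oplus\mathfrak{su}(n)$, identify the $\mathfrak{su}(n)$-invariants in $\Sigma_{2n}$ as $\vecspan_{\C}\{1,\,y_1\wedge\cdots\wedge y_n\}$, and then read off the kernel of $(\phi_s)_*(\xi)$ by eigenvalue bookkeeping, and your eigenvalues agree exactly with the paper's. The only cosmetic difference is that the paper obtains the $\mathfrak{su}(n)$-invariant subspace by citing \cite[Theorem~3.7]{AHL}, whereas you spell out the underlying fact (the decomposition of $\Sigma_{2n}\restr{\mathfrak{su}(n)}$ as $\bigoplus_{k}\Lambda^k\C^n$) or offer the direct computation as an alternative.
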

		
		\begin{proof}
			Recall that $\mathfrak{h} = \mathbb{R}\xi \oplus \mathfrak{h}'$ as Lie algebras, and note that, for $\psi \in \Sigma_{2n,2}^{1}$,
			\[
			( \forall X \in \mathfrak{h}' \colon ( \phi_s )_* (X) \cdot \psi = 0 ) \iff \psi \in \vecspan_{\mathbb{C}} \{1,y_1 \wedge \cdots \wedge y_n \} \otimes \Sigma_2 ,
			\]
			by \cite[Theorem~3.7]{AHL} and the definition of $\varphi_s$. Moreover,
			\[
			(\phi_s)_* (\xi) = \Bigg( \frac{n+1}{2} \sum_{p=1}^{n} e_{2p} \cdot e_{2p-1} , \frac{sn}{2} \hat{e}_1 \cdot \hat{e}_2 \Bigg) \in \mathfrak{spin}(2n) \oplus \mathfrak{spin}(2) \cong \mathfrak{spin}^{\C}(2n) .
			\]
			Finally, an easy calculation shows that, for $0 \leq k \leq n$ and $1 \leq j_1 < \dots < j_k \leq n$,
			\begin{align*}
				&(\phi_s)_* (\xi) \cdot \bigl( ( y_{j_1} \wedge \cdots \wedge y_{j_k} ) \otimes \hat{1} \bigr) = \frac{{\rm i}}{2} ( (n+1)(2k-n) + sn ) \bigl( ( y_{j_1} \wedge \cdots \wedge y_{j_k} ) \otimes \hat{1} \bigr) , \\ &(\phi_s)_* (\xi) \cdot ( ( y_{j_1} \wedge \cdots \wedge y_{j_k} ) \otimes \hat{y}_1 ) = \frac{{\rm i}}{2} ( (n+1)(2k-n) - sn ) ( ( y_{j_1} \wedge \cdots \wedge y_{j_k} ) \otimes \hat{y}_1 ) .
			\end{align*}
			From this it is straightforward to conclude the result.
		\end{proof}

		\subsubsection[Special spin\^{}r spinors]{Special spin$^{\boldsymbol{r}}$ spinors}
		
		The aim now is to show that the $\SU(n+1)$-invariant spin$^{\mathbb{C}}$ spinors on $\mathbb{CP}^n$ found in Theorem~\ref{thm:inv_gen_cpn} are pure and parallel with respect to a suitable connection on the auxiliary bundle.
		
		\begin{Proposition}\label{lemma:her_cpn_pure_lemma}
			For $s = n+1$ $($resp.\ $s = -(n+1))$, the $\SU(n+1)$-invariant spin$^{\C}$ spinors~${1 \otimes \hat{1}}$ and $(y_1 \wedge \cdots \wedge y_n) \otimes \hat{y}_1$ $\big($resp.\ $(y_1 \wedge \cdots \wedge y_n) \otimes \hat{1}$ and $1 \otimes \hat{y}_1\big)$ on $\mathbb{CP}^{n}$ are pure. Moreover, they are parallel with respect to the invariant connection on the corresponding auxiliary bundle determined by the Nomizu map $\Uplambda \rvert_{\mathfrak{m}} = 0$.
		\end{Proposition}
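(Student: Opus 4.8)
The strategy is to handle the two claims---parallelism and purity---separately, and the parallelism claim is essentially free. Recall that $\mathbb{CP}^n\cong\SU(n+1)/\Special(\U(1)\times\U(n))$ is a Riemannian symmetric space, and that the reductive complement $\mathfrak{m}=(\mathfrak{h})^{\perp_{B_0}}$ fixed in Section~\ref{section:her_cpn} coincides with the $(-1)$-eigenspace of the Cartan involution, so that $[\mathfrak{m},\mathfrak{m}]\subseteq\mathfrak{h}$ (on the simple Lie algebra $\mathfrak{su}(n+1)$ the $\Ad$-invariant form $B_0$ is a multiple of the Killing form, hence the two notions of complement agree). The auxiliary bundle is the principal $\SO(2)$-bundle of the spin$^{\mathbb{C}}$ structure, and the invariant connection on it with Nomizu map vanishing on $\mathfrak{m}$ is precisely the connection $\nabla^E$ appearing in Proposition~\ref{symmetricspace_differential_equation}; that proposition then yields $\nabla^{\theta}\psi=0$ for every $\SU(n+1)$-invariant $1$-twisted spin$^{\mathbb{C}}$ spinor $\psi$. (Alternatively, one may invoke Proposition~\ref{prop:lifted_nomizu} directly: the Levi--Civita Nomizu map and the chosen auxiliary Nomizu map both vanish identically on $\mathfrak{m}$, hence so does the spin lift of their tensor product, and the parallelism condition $\bigl(\widetilde{\Uplambda}\otimes\widetilde{\Uplambda'}\bigr)(X)\cdot\psi=0$ for $X\in\mathfrak{m}$ holds trivially.)

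For purity, note first that since $r=2$ the only requirement is $\bigl(\hat{\eta}^{\psi}_{12}\bigr)^2=-\Id_{\mathbb{R}^{2n}}$, the second condition in the definition being imposed only for $r\geq 3$; observe also that each of the four spinors in question is a unit vector, so the normalisation implicit in the definition of purity is satisfied. The key point is that for $\psi=\psi_1\otimes\psi_2\in\Sigma_{2n}\otimes\Sigma_2$ the Clifford action of $(X\wedge Y)\cdot(\hat{e}_1\cdot\hat{e}_2)$ splits as $(X\wedge Y)$ acting on $\Sigma_{2n}$ tensored with $\hat{e}_1\cdot\hat{e}_2$ acting on $\Sigma_2$, and the invariant Hermitian product factors accordingly, so that
\[
\eta^{\psi}_{12}(X,Y)=\Re\bigl(\langle(X\wedge Y)\cdot\psi_1,\psi_1\rangle\cdot\langle(\hat{e}_1\cdot\hat{e}_2)\cdot\psi_2,\psi_2\rangle\bigr).
\]
A short computation with \eqref{eq:def_action} in dimension $2$ shows that $\hat{e}_1\cdot\hat{e}_2$ acts on $\Sigma_2=\vecspan_{\mathbb{C}}\{\hat{1},\hat{y}_1\}$ by $+{\rm i}$ on $\hat{1}$ and by $-{\rm i}$ on $\hat{y}_1$, so the second factor above equals $\pm{\rm i}$; being purely imaginary it kills the contribution of the symmetric term $\langle X,Y\rangle$ of $X\wedge Y$, and it remains to compute $\langle X\cdot Y\cdot\psi_1,\psi_1\rangle$ for $\psi_1$ the vacuum $1$ and for $\psi_1=y_1\wedge\cdots\wedge y_n$ the top form.

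Evaluating on the $g_a$-orthonormal basis $(e_1,\dots,e_{2n})$ of $\mathfrak{m}$ by means of \eqref{eq:def_action}, one checks that $\langle e_a\cdot e_b\cdot\psi_1,\psi_1\rangle$ is real whenever $\{a,b\}\neq\{2p-1,2p\}$ and equals $\pm{\rm i}$ when $\{a,b\}=\{2p-1,2p\}$, the overall sign depending on whether $\psi_1$ is the vacuum or the top form (these give opposite signs, since on the top form the roles of $y_j\wedge(\cdot)$ and $x_j\lrcorner(\cdot)$ are exchanged). Combining this with the $\pm{\rm i}$ from the $\Sigma_2$-factor, in each of the four listed spinors one obtains $\eta^{\psi}_{12}=\varepsilon\sum_{p=1}^{n}e^{2p-1}\wedge e^{2p}$, with $\varepsilon=-1$ for $s=n+1$ and $\varepsilon=+1$ for $s=-(n+1)$; in particular the two spinors belonging to the same structure induce the \emph{same} $2$-form, the sign contributions of the $\Sigma_{2n}$- and $\Sigma_2$-factors conspiring to agree. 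Hence $\hat{\eta}^{\psi}_{12}=\varepsilon J$, where $J$ is the invariant almost complex structure with $Je_{2p-1}=e_{2p}$, and therefore $\bigl(\hat{\eta}^{\psi}_{12}\bigr)^2=J^2=-\Id_{\mathbb{R}^{2n}}$, which is purity. The only genuine labour is the sign bookkeeping in these Clifford computations; there is no conceptual obstacle, and the outcome $\hat{\eta}^{\psi}_{12}=\pm J$ is exactly what one expects in view of the K\"ahler entry in Table~\ref{table:summary} and Theorem~\ref{thm:herrera}(4).
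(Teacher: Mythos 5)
Your proof is correct and follows essentially the same route as the paper: parallelism is obtained for free from the symmetric-space structure (Nomizu map vanishing on $\mathfrak{m}$), and purity is verified by a direct Clifford computation of $\eta^{\psi}_{12}$ on the orthonormal basis of $\mathfrak{m}$ using \eqref{eq:def_action}, showing $\hat{\eta}^{\psi}_{12}=\pm J$. The only difference is organisational: the paper carries out the computation explicitly for the single spinor $(y_1\wedge\cdots\wedge y_n)\otimes\hat{1}$ and declares the other three analogous, whereas you treat all four uniformly by factoring the Hermitian product and the Clifford action over the tensor decomposition $\Sigma_{2n}\otimes\Sigma_2$ and then observing that the purely imaginary $\Sigma_2$-factor kills the symmetric term $\langle X,Y\rangle$; your sign bookkeeping (vacuum vs.\ top form giving opposite $\pm{\rm i}$, compensated by the $\hat{1}$ vs.\ $\hat{y}_1$ factor) is accurate and does expose why the two spinors attached to the same value of $s$ induce the same $2$-form, a point the paper leaves implicit.
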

		\begin{proof}
			We will only show the calculations for the spinor $\psi = (y_1 \wedge \cdots \wedge y_n) \otimes \hat{1}$, as the other three are analogous. Since $r=2<3$, we only need to show that \smash{$\bigl(\hat{\eta}^{\psi}_{12}\bigr)^2 = - \Id$}. Indeed, a~straightforward calculation shows that
			\begin{gather*}
				\eta^{\psi}_{12} (e_{2p}, e_{2q}) = \Re \langle e_{2p} \cdot e_{2q} \cdot \hat{e}_1 \cdot \hat{e}_2 \cdot \psi , \psi \rangle + \delta_{p,q} \Re \langle \hat{e}_1 \cdot \hat{e}_2 \cdot \psi , \psi \rangle \\
			\phantom{\eta^{\psi}_{12} (e_{2p}, e_{2q}) }{}	 = \Re \langle {\rm i} e_{2p} \cdot e_{2q} \cdot \psi , \psi \rangle + \delta_{p,q}\Re \langle -{\rm i} \psi , \psi \rangle = 0, \\
				\eta^{\psi}_{12} (e_{2p-1}, e_{2q-1}) = 0, \\
				\eta^{\psi}_{12} (e_{2p}, e_{2q-1}) = \Re \langle e_{2p} \cdot e_{2q-1} \cdot \hat{e}_1 \cdot \hat{e}_2 \cdot \psi , \psi \rangle = \Re \langle {\rm i} e_{2p} \cdot e_{2q-1} \cdot \psi , \psi \rangle = - \delta_{p,q}.
			\end{gather*}
			Hence, $\hat{\eta}^{\psi}_{12} (e_{2p}) = - e_{2p-1}$ and $\hat{\eta}^{\psi}_{12} (e_{2p-1}) = e_{2p}$.
			
			The last assertion of the proposition follows by noting that, as $\C\mathbb{P}^n \cong \SU(n+1)/\Special(\U(1)\times \U(n))$ is a symmetric space, the Levi-Civita connection coincides with the Ambrose--Singer connection, whose Nomizu map satisfies $\Uplambda^g\rvert_{\mathfrak{m}} \equiv 0$.
		\end{proof}
		
		In light of the Ricci decomposition in \cite[Theorem~3.1]{EH16}, the existence of parallel pure spin$^{\C}$ spinors encodes a very well-known fact -- see, e.g., \cite{ziller}.
		
		\begin{Theorem}\label{cor:cpn_conclusion}
			The $\SU(n+1)$-invariant metrics $g_a$ on $\mathbb{CP}^n$ are K\"{a}hler--Einstein.
		\end{Theorem}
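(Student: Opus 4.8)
The plan is to deduce both properties from the parallel pure spin$^{\mathbb{C}}$ spinor produced in Proposition~\ref{lemma:her_cpn_pure_lemma}, by plugging it into the structural results collected in Theorem~\ref{thm:herrera}. Fix $n$ and take $s = n+1$, which is admissible for every $n$ since $n \not\equiv n+1$ mod $2$; let $\psi$ denote one of the corresponding invariant $\theta$-parallel pure spin$^{\mathbb{C}}$ spinors given by Proposition~\ref{lemma:her_cpn_pure_lemma}, say $\psi = (y_1 \wedge \cdots \wedge y_n)\otimes\hat{1}$, where $\theta$ is the invariant connection on the auxiliary bundle whose Nomizu map vanishes on $\mathfrak{m}$. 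Since $r = 2$ and $\psi$ is a pure, $\theta$-parallel spinor, Theorem~\ref{thm:herrera}(4) immediately yields that every $g_a$ is K\"{a}hler, so it only remains to establish the Einstein condition.

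For the Einstein property I would invoke the Ricci decomposition of Theorem~\ref{thm:herrera}(1). Since $r = 2$, the sum over pairs $k < l$ reduces to the single term $(k,l) = (1,2)$, so that
\[
\Ric = \frac{1}{\abs{\psi}^2}\,\hat{\Theta}_{12}\circ\hat{\eta}_{12}^{\psi}.
\]
The endomorphism $\hat{\eta}_{12}^{\psi}$ is already identified in the proof of Proposition~\ref{lemma:her_cpn_pure_lemma}: it acts by $e_{2p-1}\mapsto e_{2p}$ and $e_{2p}\mapsto -e_{2p-1}$, i.e.\ it is the $\SU(n+1)$-invariant complex structure $J$ of $\mathbb{CP}^n$, and in particular $\bigl(\hat{\eta}_{12}^{\psi}\bigr)^2 = -\Id$. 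It therefore suffices to show that $\hat{\Theta}_{12}$ is a real multiple of $J$. Now $\hat{\Theta}_{12}$ is the skew-symmetric endomorphism of $\mathfrak{m}$ associated via $g_a$ to the curvature $2$-form $\Theta_{12}$ of the $G$-invariant connection $\theta$ on the rank-$2$ auxiliary bundle, hence it is $\Ad_H$-equivariant. The isotropy representation of $H$ on $\mathfrak{m}$ is irreducible and of complex type -- the latter witnessed by the invariant $J$ -- so by Schur's lemma $\End_H(\mathfrak{m}) = \mathbb{R}\,\Id \oplus \mathbb{R}\,J$; skew-symmetry then forces the $\Id$-component to vanish, whence $\hat{\Theta}_{12} = c\,J$ for some $c \in \mathbb{R}$. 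Substituting back gives
\[
\Ric = \frac{c}{\abs{\psi}^2}\,J^2 = -\frac{c}{\abs{\psi}^2}\,\Id,
\]
so $g_a$ is Einstein, and combined with the previous paragraph this shows every $g_a$ is K\"{a}hler--Einstein.

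The only substantive step is the last identification of $\hat{\Theta}_{12}$, and the Schur-lemma argument is the most economical route since it avoids computing any brackets. If one additionally wants the Einstein constant explicitly (and, in particular, that it is positive), one should instead evaluate $\hat{\Theta}_{12}$ via the Nomizu curvature formula, which for the canonical connection gives $\Omega(X,Y) = -(\varphi_s)_*\bigl([X,Y]_{\mathfrak{h}}\bigr)$ for $X,Y \in \mathfrak{m}$; since $(\varphi_s)_*$ kills $\mathfrak{su}(n) \subseteq \mathfrak{h}$, only the $\mathbb{R}\xi$-component of the bracket contributes, and a short computation then pins down $c$. Of course the Einstein property is also a classical consequence of $\mathbb{CP}^n$ being an irreducible symmetric space, but the intended point here is to exhibit it as being encoded by the parallel pure spin$^{\mathbb{C}}$ spinor of Proposition~\ref{lemma:her_cpn_pure_lemma}.
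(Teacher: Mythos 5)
Your proposal is correct and proves the statement, but for the Einstein part you take a genuinely different and more conceptual route than the paper. Both arguments begin identically: Theorem~\ref{thm:herrera}(4) gives K\"ahler, and Theorem~\ref{thm:herrera}(1) gives the Ricci decomposition $\Ric = \frac{1}{|\psi|^2}\hat{\Theta}_{12}\circ\hat{\eta}_{12}^{\psi}$. The paper then computes $\hat{\Theta}_{12}$ explicitly via the Nomizu curvature formula $\Omega(X,Y) = -\Uplambda([X,Y])$, evaluates the relevant brackets of basis elements of $\mathfrak{m}$, and reads off $\hat{\Theta}_{12} = \frac{s}{a}\sum_p e_{2p-1}\wedge e_{2p}$, which pins down the Einstein constant as $\frac{n+1}{a}$. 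You instead observe that $\hat{\Theta}_{12}$ is $\Ad_H$-equivariant (because $\Omega$ is the curvature of a $G$-invariant $\SO(2)$-connection and $\Ad_{\SO(2)}$ is trivial since $\SO(2)$ is abelian), and invoke Schur's lemma on the irreducible complex-type isotropy representation to conclude that $\End_H(\mathfrak{m}) = \mathbb{R}\,\Id \oplus \mathbb{R}\,J$; skew-symmetry of $\hat{\Theta}_{12}$ then kills the $\Id$-component, so $\hat{\Theta}_{12}$ is a real multiple of $J = \hat{\eta}_{12}^{\psi}$ and the Ricci tensor is a multiple of the identity. This is an economical and structurally cleaner argument that avoids all bracket computations. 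The trade-off, which you correctly flag, is that it does not produce the explicit Einstein constant, and in particular does not by itself show positivity; the paper uses the explicit value $\frac{n+1}{a}$ in the subsequent remark to identify the Fubini--Study metric as $g_{1/2}$. One small slip to fix: you declare $s=n+1$ but then set $\psi = (y_1\wedge\cdots\wedge y_n)\otimes\hat{1}$, which per Proposition~\ref{lemma:her_cpn_pure_lemma} is a spinor for $s=-(n+1)$, not $s=n+1$; indeed the explicit formula for $\hat{\eta}_{12}^{\psi}$ you quote is the one computed in that proposition for the $s=-(n+1)$ spinor. The mismatch is harmless since all four spinors listed there are pure and parallel, but the choice of $s$ should match the spinor actually used.
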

		\begin{proof}
			Take $s=-(n+1)$. Consider the spin$^{\C}$ structure defined by $\varphi_s$, and endow its auxiliary bundle with the connection described in Proposition~\ref{lemma:her_cpn_pure_lemma}. We have seen that this spin$^{\C}$ structure carries a non-zero parallel pure spin$^{\C}$ spinor $\psi = (y_1 \wedge \cdots \wedge y_n) \otimes \hat{1} $. This implies that the metric is K\"{a}hler \cite[Corollary~4.10]{ES19} with respect to the invariant complex structure defined by~$\hat{\eta}^{\psi}_{12}$. Now, by Theorem~\ref{thm:herrera}\,(1), the Ricci tensor decomposes as
\[
			\Ric = \frac{1}{\abs{\psi}^2} \hat{\Theta}_{1 2} \circ \hat{\eta}_{1 2}^{\psi} ,
\]
			where $\hat{\Theta}_{1 2}$ is the endomorphism associated to the $2$-form on $\mathfrak{m}$
			$
			\Theta_{1 2} (X,Y) \coloneqq \langle \Omega(X,Y) ( \hat{e}_1 ) , \hat{e}_2 \rangle $, $ X,Y \in \mathfrak{m}$,
			where~$\Omega$ is the curvature $2$-form of the connection on the auxiliary bundle. Recall \cite{ANT} that, if $\Uplambda$ is the Nomizu map of the connection on the auxiliary bundle, then
			\[
			\forall X,Y \in \mathfrak{m} \colon\ \Omega(X,Y) = [ \Uplambda(X), \Uplambda(Y) ]_{\mathfrak{so}(2)} - \Uplambda( [X,Y] ) = - \Uplambda( [X,Y] ) .
			\]
			It is now easy to see that, for all $1 \leq p, q \leq n$,
			\[
			\Omega( e_{2p-1}, e_{2q} ) = \delta_{p,q} \frac{s}{a} \hat{e}_1 \wedge \hat{e}_2 , \qquad \Omega( e_{2p-1}, e_{2q-1} ) = \Omega( e_{2p}, e_{2q} ) = 0 .
			\]
			Hence,
			\[
			\hat{\Theta}_{1 2} = \frac{s}{a} \sum_{p=1}^n e_{2p-1} \wedge e_{2p} ,
			\]
			and finally, using the expression of $\hat{\eta}^{\psi}_{12}$ obtained in the proof of Proposition~\ref{lemma:her_cpn_pure_lemma}, we obtain
			\begin{equation}\label{eq:ricci_cpn}
				\Ric = \frac{1}{\abs{\psi}^2} \hat{\Theta}_{1 2} \circ \hat{\eta}_{1 2}^{\psi} = \frac{n+1}{a} \Id .
			\end{equation}
This proved the theorem.
		\end{proof}

		\begin{Remark}
			Recall that the Fubini--Study metric $g_{\rm FS}$ on $\mathbb{CP}^n$ is $\SU(n+1)$-invariant, and that its Ricci constant is $2(n+1)$. From equation \eqref{eq:ricci_cpn}, we can deduce that $g_{\rm FS}=g_{1/2}$.
		\end{Remark}
		
		\subsection{Symplectic complex projective space}
		
		Consider, for $n \geq 1$, the homogeneous realisation of odd-dimensional complex projective space
		\[
		\mathbb{CP}^{2n+1} \cong \faktor{\Sp(n+1)}{ \U(1) \times \Sp(n)} ,
		\]
where $H \coloneqq \U(1) \times \Sp(n)$ is realised as a subgroup of $\Sp(n+1)$ by the upper left-hand $1 \times 1$ block for $\U(1)$ and the lower right-hand $n \times n$ block for $\Sp(n)$. Denote by $\mathfrak{h}$ the Lie algebra of~$H$ and $\mathfrak{h}' \coloneqq \mathfrak{sp}(n) \subseteq \mathfrak{sp}(n+1)$. Then,
	$
			\mathfrak{h} = \mathbb{R}\xi_1 \oplus \mathfrak{h}' $ (as Lie algebras),
		where \smash{$\xi_1 \coloneqq {\rm i} F_{1,1}^{(n+1)}$} and
			\begin{align*}
\mathfrak{h}' &= \mathfrak{sp}(n) \\
&= \vecspan_{\mathbb{R}} \bigl\{ {\rm i} F_{p,p}^{(n+1)}, {\rm j} F_{p,p}^{(n+1)}, {\rm k} F_{p,p}^{(n+1)}, {\rm i} F_{r,s}^{(n+1)}, {\rm j} F_{r,s}^{(n+1)}, {\rm k} F_{r,s}^{(n+1)}, E_{r,s}^{(n+1)} \bigr\}_{\begin{subarray}{l} 2 \leq r < s \leq n+1 \\ p = 2 , \dots, n+1 \end{subarray}} .
\end{align*}
		The Lie subalgebra $\mathfrak{h} \subseteq \mathfrak{sp}(n+1)$ has a reductive complement $\mathfrak{m} \coloneqq ( \mathfrak{h})^{\perp_{B_0}} = \mathcal{V} \oplus \mathcal{H}$, where
		\begin{align*}
			&\mathcal{V} \coloneqq \vecspan_{\mathbb{R}} \bigl\{ \xi_2 \coloneqq -{\rm k} F_{1,1}^{(n+1)}, \xi_3 \coloneqq {\rm j} F_{1,1}^{(n+1)} \bigr\} , \\
			&\mathcal{H} \coloneqq \vecspan_{\mathbb{R}} \bigl\{e_{4p} \coloneqq {\rm j} F_{1,p+1}^{(n+1)}, e_{4p+1} \coloneqq {\rm k} F_{1,p+1}^{(n+1)}, e_{4p+2} \coloneqq {\rm i} F_{1,p+1}^{(n+1)}, e_{4p+3} \coloneqq E_{1,p+1}^{(n+1)}\bigr\}_{p = 1, \dots, n} ,
		\end{align*}
		and this is the decomposition of $\mathfrak{m}$ into irreducible subrepresentations\footnote{Note that this decomposition into $\mathcal{V}$ and $\mathcal{H}$ corresponds to the vertical and horizontal distributions of the generalised Hopf fibration $S^2 \hookrightarrow \mathbb{CP}^{2n+1} \to \mathbb{HP}^n$.} of the adjoint representation of $\mathfrak{h}$. We have, therefore, by Theorem~\ref{thm:inv_metrics}, a two-parameter family of invariant metrics
		\begin{equation*}
			g_{a,t} \coloneqq a B_0\restr{\mathcal{H} \times \mathcal{H}} + 2at B_0\restr{\mathcal{V} \times \mathcal{V}} , \qquad a,t>0 ,
		\end{equation*}
		and a $g_{a,t}$-orthonormal basis of $\mathfrak{m}$ is given by
		\[
		\left\{ \xi_2^{a,t} \coloneqq \frac{1}{\sqrt{2ta}} \xi_2 , \xi_3^{a,t} \coloneqq \frac{1}{\sqrt{2ta}} \xi_3 , e_{4p + \varepsilon}^{a,t} \coloneqq \frac{1}{\sqrt{2a}} e_{4p + \varepsilon} \right\}_{\begin{subarray}{l} \varepsilon = 0,1,2,3 \\ p = 1 , \dots, n \end{subarray}} .
		\]
		We take the orientation defined by the ordering \smash{$\bigl(\xi^{a,t}_2, \xi^{a,t}_3, e^{a,t}_4, \dots, e^{a,t}_{4n+3}\bigr)$}.

		\subsubsection[Invariant spin\^{}r structures]{Invariant spin$^{\boldsymbol{r}}$ structures}\label{inv_spinr_str_SpnU1}
		
		We begin by determining the $\Sp(n+1)$-invariant spin type of $\mathbb{CP}^{2n+1}$. By \cite[p.\ 327]{HS90}, it is clear that $\mathbb{CP}^{2n+1}$ admits a unique spin structure, and this structure is $\Sp(n+1)$-invariant. Using the algebraic characterisation in Theorem~\ref{cor:inv_gen_spin}, we can explicitly obtain all $\Sp(n+1)$-invariant spin$^{\C}$ structures on $\mathbb{CP}^{2n+1}$:
		
		\begin{Theorem}\label{thm_inv_gen_spin_structures_SpnU1}
			The $\Sp(n+1)$-invariant spin$^{\mathbb{C}}$ structures on $\mathbb{CP}^{2n+1}$ are given by
			\begin{equation*}
				\Sp(n+1) \times_{\phi_s} \Spin^{\mathbb{C}}(4n+2) , \qquad s \in 2\Z ,
			\end{equation*}
			where $\phi_s$ is the unique lift of $\sigma \times \varphi_s$ to $\Spin^{\mathbb{C}}(4n+2)$, $\sigma \colon H \to \SO(4n+2)$ is the isotropy representation and $\varphi_s \colon H \to \SO(2) \cong \U(1)$ is defined by $(z,A) \mapsto z^s$.
		\end{Theorem}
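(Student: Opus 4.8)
The plan is to follow the strategy of the proof of Theorem~\ref{thm:her_cpn_str}, combining the bijection of Theorem~\ref{cor:inv_gen_spin} with the topological data of Proposition~\ref{prop:fg}. First I would classify the homomorphisms $\varphi\colon H = \U(1)\times\Sp(n)\to\SO(2)\cong\U(1)$. Since $\Sp(n)$ is semisimple, it has trivial abelianisation, so every such $\varphi$ annihilates the $\Sp(n)$-factor and hence is of the form $\varphi_s\colon(z,A)\mapsto z^{s}$ for a unique $s\in\mathbb{Z}$. As $\U(1)$ is abelian there is no identification to make modulo conjugation by $\SO(2)$, so by Theorem~\ref{cor:inv_gen_spin} the task reduces to deciding precisely those $s$ for which $\sigma\times\varphi_s\colon H\to\SO(4n+2)\times\SO(2)$ admits a lift to $\Spin^{\mathbb{C}}(4n+2)$.

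Next I would invoke Proposition~\ref{prop:fg}(2) — valid here since $4n+2\geq 6$ — which identifies $\varphi^{2,4n+2}\colon\Spin^{\mathbb{C}}(4n+2)\to\SO(4n+2)\times\SO(2)$ as the two-sheeted covering whose $\pi_1$-image is $\langle(1,1)\rangle\subseteq\mathbb{Z}_2\times\mathbb{Z}\cong\pi_1(\SO(4n+2))\times\pi_1(\SO(2))$. Since $H$ is connected, the lift exists if and only if $(\sigma\times\varphi_s)_{\sharp}(\pi_1(H))\subseteq\langle(1,1)\rangle$. Because $\Sp(n)$ is simply connected, $\pi_1(H)\cong\pi_1(\U(1))\cong\mathbb{Z}$ is generated by the loop $\beta(t)=\bigl(\mathrm{e}^{2\pi\mathrm{i}t},I\bigr)$, and $(\varphi_s)_{\sharp}(\beta)=s\in\pi_1(\SO(2))\cong\mathbb{Z}$; so everything comes down to identifying $\sigma_{\sharp}(\beta)\in\pi_1(\SO(4n+2))\cong\mathbb{Z}_2$.

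Here the quickest route is to recall, as noted just before the theorem, that $\mathbb{CP}^{2n+1}$ carries an $\Sp(n+1)$-invariant spin structure; by Theorem~\ref{cor:inv_gen_spin} with $r=1$ this says exactly that $\sigma$ lifts to $\Spin(4n+2)$, i.e.\ $\sigma_{\sharp}$ kills $\pi_1(H)$, so $\sigma_{\sharp}(\beta)=0$. (Alternatively one can compute this directly from the conjugation realisation of $\sigma$ on $\mathfrak{m}=\mathcal{V}\oplus\mathcal{H}$: conjugation by $\diag(\mathrm{e}^{2\pi\mathrm{i}t},1,\dots,1)$ acts on each quaternionic $4$-plane of $\mathcal{H}$ as left multiplication by $\mathrm{e}^{2\pi\mathrm{i}t}$ on $\mathbb{H}\cong\mathbb{R}^4$, contributing rotation number $2$ per summand for a total of $2n$, and on the $2$-plane $\mathcal{V}$ as conjugation on the $(1,1)$-entry, contributing rotation number $\pm 2$; the total $2n\pm 2$ is even.) Combining, $(\sigma\times\varphi_s)_{\sharp}(\beta)=(0,s)$, and since the elements of $\langle(1,1)\rangle\subseteq\mathbb{Z}_2\times\mathbb{Z}$ are exactly the pairs $(k\bmod 2,\,k)$, this lies in $\langle(1,1)\rangle$ if and only if $s\in2\mathbb{Z}$. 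Together with the bijection of Theorem~\ref{cor:inv_gen_spin} and the fact that the $\varphi_s$ are pairwise distinct (hence non-conjugate), this yields the stated list of invariant spin$^{\mathbb{C}}$ structures.

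I expect the argument to be essentially routine; the only point requiring care is the bookkeeping that produces $(\sigma\times\varphi_s)_{\sharp}(\beta)=(0,s)$ — in particular fixing the identification $\SO(2)\cong\U(1)$ so that $(\varphi_s)_{\sharp}(\beta)=s$ rather than some multiple, and correctly reading off $\sigma_{\sharp}(\beta)=0$ (which, reassuringly, is already forced by the known $\Sp(n+1)$-invariance of the spin structure on $\mathbb{CP}^{2n+1}$). One then just needs the elementary observation about the shape of the subgroup $\langle(1,1)\rangle$, which is exactly where the parity condition $s\in2\mathbb{Z}$ comes from.
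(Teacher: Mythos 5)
Your proposal is correct and follows essentially the same route as the paper: classify the homomorphisms $\varphi_s\colon H\to\U(1)$ using semisimplicity of $\Sp(n)$, then apply Proposition~\ref{prop:fg} via Theorem~\ref{cor:inv_gen_spin} to extract the parity condition $s\in2\Z$. The paper simply defers the computation of $\sigma_\sharp$ to the argument of Theorem~\ref{thm:her_cpn_str}, whereas you fill in the detail — both your shortcut via the known $\Sp(n+1)$-invariant spin structure and your direct rotation-number count on $\mathcal{V}\oplus\mathcal{H}$ are sound.
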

		
		\begin{proof}
			This follows from Theorem~\ref{cor:inv_gen_spin}, together with the fact that $\Sp(n)$ is simple and that all Lie group homomorphisms $\U(1) \to \U(1)$ are of the form $z \mapsto z^s$, for some $s \in \mathbb{Z}$. Using Proposition~\ref{prop:fg} as in the proof of Theorem~\ref{thm:her_cpn_str}, one sees that $\sigma \times \varphi_s$ lifts to $\Spin^{\mathbb{C}}(4n+2)$ if, and only if, $s$ is even. As $\U(1)$ is abelian, the representations $\varphi_s$ are pairwise non-equivalent. Hence, these spin$^{\mathbb{C}}$ structures are pairwise non-$\Sp(n+1)$-equivariantly equivalent.
		\end{proof}
		
		\subsubsection[Invariant spin\^{}r spinors]{Invariant spin$^{\boldsymbol{r}}$ spinors}
		
		First, we classify the $\Sp(n+1)$-invariant spinors for the unique spin structure of $\mathbb{CP}^{2n+1}$.
		\begin{Theorem}\label{thm:symp_cpn_spinors}
			The space $\Sigma_{\mathrm{inv}}$ of $\Sp(n+1)$-invariant spinors on $\mathbb{CP}^{2n+1}$ is given by
			\begin{equation*}
				\Sigma_{\text{inv}} =
				\begin{dcases}
					\vecspan_{\mathbb{C}} \bigl\{\psi_{+} \coloneqq \omega^{(n+1)/2} , \psi_{-} \coloneqq y_1 \wedge \omega^{(n-1)/2} \bigr\}, & n \text{ odd, }\\
					0 ,& n \text{ even, }
				\end{dcases}
			\end{equation*}
			where $\omega \coloneqq \sum_{p=1}^{n} y_{2p} \wedge y_{2p+1}$.
		\end{Theorem}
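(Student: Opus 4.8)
The strategy is to use the Lie algebra splitting $\mathfrak{h} = \mathbb{R}\xi_1 \oplus \mathfrak{h}'$ with $\mathfrak{h}' = \mathfrak{sp}(n)$ and to locate the invariant spinors in two stages. Since $H = \U(1)\times\Sp(n)$ is connected, $\Sigma_{\mathrm{inv}}$ is the subspace of $\Sigma_{4n+2}$ annihilated by the spin lift $\widetilde{\sigma}_*$ of the isotropy representation; I would first determine the subspace annihilated by all of $\mathfrak{h}'$, and then impose the single remaining equation coming from $\xi_1 = \mathrm{i}F_{1,1}^{(n+1)}$.

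For the first stage, observe that $\mathfrak{h}' = \mathfrak{sp}(n)$ is supported on the lower right $n\times n$ block, hence commutes with $\mathcal{V}$ and acts on $\mathcal{H}$ as the standard representation of $\mathfrak{sp}(n)$ on $\mathbb{H}^n$; in the chosen orthonormal basis this means $\mathfrak{h}'\hookrightarrow\mathfrak{su}(2n)\subseteq\mathfrak{so}(4n)\subseteq\mathfrak{so}(4n+2)$, acting trivially on the $y_1$ direction and on the span $L'_{\mathcal H}$ of $y_2,\dots,y_{2n+1}$ as the standard $\mathfrak{sp}(n)$-module $\mathbb{C}^{2n}$. By the exterior forms description of the spin representation, the $\mathfrak{h}'$-action on $\Sigma_{4n+2}\cong\Lambda^{\bullet}(\mathbb{C}y_1)\otimes\Lambda^{\bullet}(L'_{\mathcal H})$ is (trivial)$\,\otimes\,$(the natural action on $\Lambda^{\bullet}(\mathbb{C}^{2n})$), up to a determinant twist which is trivial on $\mathfrak{su}(2n)$ (cf.\ \cite{AHL}). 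Since the $\mathfrak{sp}(n)$-invariants in $\Lambda^{\bullet}(\mathbb{C}^{2n})$ are exactly the powers of the invariant symplectic form (classical invariant theory / the Lefschetz decomposition), and that form is here $\omega = \sum_{p=1}^n y_{2p}\wedge y_{2p+1}$, this stage yields that the $\mathfrak{h}'$-invariant spinors form the $2(n+1)$-dimensional space $\vecspan_{\mathbb{C}}\{\omega^l,\, y_1\wedge\omega^l : 0\le l\le n\}$.

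For the second stage, I would compute $\ad(\xi_1)\restr{\mathfrak{m}}$ from the quaternionic commutation relations: on $\mathcal{V}$ one gets $[\xi_1,\xi_2] = 2\xi_3$ and $[\xi_1,\xi_3] = -2\xi_2$, while on $\mathcal{H}$ the endomorphism $\ad(\xi_1)$ is block diagonal, rotating each of the coordinate planes spanned by $\{e_{4p},e_{4p+1}\}$ and $\{e_{4p+2},e_{4p+3}\}$; crucially, in the orthonormal basis these are exactly the planes that pair up to $y_1,y_2,\dots,y_{2n+1}$, so $\sigma_*(\xi_1)$ is a sum of elementary rotations of the form $c\,(e_{2j-1}\wedge e_{2j})$. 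Using \eqref{eq:def_action} one checks that the spin lift of $e_{2j-1}\wedge e_{2j}$ acts on a monomial $y_{j_1}\wedge\cdots\wedge y_{j_k}$ by $+\tfrac{\mathrm{i}}{2}$ if $j\notin\{j_1,\dots,j_k\}$ and $-\tfrac{\mathrm{i}}{2}$ otherwise, and assembling the contributions gives
\[
\widetilde{\sigma}_*(\xi_1)\cdot(y_{j_1}\wedge\cdots\wedge y_{j_k}) = \mathrm{i}\,(n+1-k-\varepsilon)\,(y_{j_1}\wedge\cdots\wedge y_{j_k}),
\]
where $\varepsilon = 1$ if $1\in\{j_1,\dots,j_k\}$ and $\varepsilon = 0$ otherwise. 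Imposing $\widetilde{\sigma}_*(\xi_1)\cdot\psi = 0$ on the spanning spinors from the first stage: $\omega^l$ has $k = 2l$, $\varepsilon = 0$, so it is annihilated iff $2l = n+1$, and $y_1\wedge\omega^l$ has $k = 2l+1$, $\varepsilon = 1$, so it is annihilated iff $2l+2 = n+1$. Each equation admits a solution with $0\le l\le n$ exactly when $n$ is odd, namely $l = (n+1)/2$ giving $\psi_+ = \omega^{(n+1)/2}$ and $l = (n-1)/2$ giving $\psi_- = y_1\wedge\omega^{(n-1)/2}$; for $n$ even neither is solvable, so $\Sigma_{\mathrm{inv}} = 0$, as claimed.

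The step I expect to be the main obstacle is the computation of $\ad(\xi_1)\restr{\mathfrak{m}}$ and its Clifford lift: keeping track of the signs and normalisation constants in passing from the quaternionic matrix commutators to $\ad(\xi_1)$ in the orthonormal basis, and---above all---verifying that the rotation planes of $\ad(\xi_1)$ coincide with the coordinate planes underlying the $y_j$, so that $\widetilde{\sigma}_*(\xi_1)$ acts diagonally on $\omega^l$ and $y_1\wedge\omega^l$ with the eigenvalue above. By comparison, the first stage is essentially a citation---the exterior forms model of the spin representation combined with the $\mathfrak{sp}(n)$-invariant theory of exterior algebras---and the final intersection is elementary arithmetic.
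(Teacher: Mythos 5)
Your proof is correct and follows essentially the same two-stage strategy as the paper: first isolate the $\mathfrak{sp}(n)$-invariant subspace $\vecspan_{\mathbb{C}}\{\omega^k,\,y_1\wedge\omega^k\}_{k=0}^n$ of $\Sigma_{4n+2}$, then impose the $\xi_1$-annihilation condition and read off when the eigenvalues $\mathrm{i}(n+1-2k)$ and $\mathrm{i}(n-1-2k)$ vanish. The only difference is cosmetic: the paper cites \cite[Theorem~4.11]{AHL} for the first stage, whereas you reconstruct it inline via the exterior-forms model together with $\mathfrak{sp}(n)$-invariant theory in $\Lambda^{\bullet}(\mathbb{C}^{2n})$; your stage-two eigenvalue formula $\mathrm{i}(n+1-k-\varepsilon)$ correctly encodes the coefficient-$2$ rotation on the $\mathcal{V}$-plane and coefficient-$1$ rotations on the $\mathcal{H}$-planes, matching the paper's explicit $\widetilde{\sigma}_*(\xi_1)$.
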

		\begin{proof}
			By \cite[Theorem~4.11]{AHL}, the space of invariant spinors is quite restricted:
			\[
			\Sigma_{\mathrm{inv}} \subseteq \vecspan_{\mathbb{C}}\bigl\{ \omega^k , y_1 \wedge \omega^k \bigr\}_{k=0, \dots , n } .
			\]
			We only need to determine which of these are annihilated by $\xi_1$. A computation analogous to the one in the proof of Theorem~\ref{thm:her_classical_spinors} shows that, if $\widetilde{\sigma}$ is the lift to $\Spin(4n+2)$ of the isotropy representation $\sigma \colon H \to \SO(4n+2)$,
			\[ \widetilde{\sigma}_*(\xi_1) = \xi^{a,t}_2 \cdot \xi^{a,t}_3 + \frac{1}{2} \sum_{p=1}^{n} \bigl( e^{a,t}_{4p} \cdot e^{a,t}_{4p+1} + e^{a,t}_{4p+2} \cdot e^{a,t}_{4p+3} \bigr) . \]
			In particular,
			\[ \widetilde{\sigma}_*(\xi_1) \cdot \omega^k = {\rm i} (n+1-2k) \omega^k , \qquad \widetilde{\sigma}_* (\xi_1) \cdot \bigl( y_1 \wedge \omega^k \bigr) = {\rm i} (n-1-2k)\bigl( y_1 \wedge \omega^k \bigr) , \]
			and the result follows.
		\end{proof}

		We now turn to the study of spin$^{\mathbb{C}}$ spinors. Using the same argument as in the Hermitian case (see Theorem~\ref{thm:inv_gen_cpn}), one obtains.

		\begin{Theorem} \label{thm:inv_gen_cpn2}
			For $n \in \N$ and $s=2s' \in 2\Z$, the space \smash{$\bigl(\Sigma_{4n+2,2}^{1}\bigr)_{\mathrm{inv}}$} of $\Sp(n+1)$-invariant $1$-twisted spin$^{\mathbb{C}}$ spinors on $\mathbb{CP}^{2n+1}$ associated to the spin$^{\mathbb{C}}$ structure $\Sp(n+1) \times_{\phi_s} \Spin^{\mathbb{C}}(4n+2)$ is given by
			\begin{equation*}
				\bigl(\Sigma_{4n+2,2}^{1}\bigr)_{\mathrm{inv}} =
				\begin{dcases}
					\vecspan_{\mathbb{C}} \bigl\{ \omega^{(n+1+s')/2} , y_1 \wedge \omega^{(n-1+s')/2} \bigr\} \otimes \hat{1} \oplus \\
					\qquad \oplus \vecspan_{\mathbb{C}}\bigl\{ \omega^{(n+1-s')/2} , y_1 \wedge \omega^{(n-1-s')/2} \bigr\} \otimes \hat{y_1}, & n \not\equiv s' \mod 2 , \\
					0, & \text{otherwise, }
				\end{dcases}
			\end{equation*}
			where negative powers of $\omega$ are defined to be zero. In particular, the $\Sp(n+1)$-invariant spinor type of $\mathbb{CP}^{2n+1}$ satisfies
			\begin{equation*}
				\sigma\bigl( \mathbb{CP}^{2n+1} , \Sp(n+1) \bigr) =
				\begin{dcases}
					1 , & n\  \text{odd}, \\
					2 ,& n\  \text{even}.
				\end{dcases}
			\end{equation*}
		\end{Theorem}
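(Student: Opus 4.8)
The plan is to mirror the proof of Theorem~\ref{thm:inv_gen_cpn}, working with the splitting $\mathfrak{h} = \mathbb{R}\xi_1 \oplus \mathfrak{h}'$, $\mathfrak{h}' = \mathfrak{sp}(n)$. Since $H = \U(1) \times \Sp(n)$ is connected, a spinor $\psi \in \Sigma_{4n+2,2}^{1} = \Sigma_{4n+2} \otimes \Sigma_2$ is $\Sp(n+1)$-invariant precisely when $(\phi_s)_*(X)\cdot\psi = 0$ for all $X \in \mathfrak{h}$, so I would first impose invariance under $\mathfrak{h}'$ and then under the remaining generator $\xi_1$. Because $\varphi_s$ is trivial on the $\Sp(n)$-factor, $(\phi_s)_*|_{\mathfrak{sp}(n)}$ acts trivially on $\Sigma_2$ and as the spin lift of the isotropy representation on $\Sigma_{4n+2}$; hence the $\mathfrak{h}'$-invariant subspace of $\Sigma_{4n+2}\otimes\Sigma_2$ is $W \otimes \Sigma_2$, where $W \subseteq \Sigma_{4n+2}$ is the $\mathfrak{sp}(n)$-invariant subspace. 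By \cite[Theorem~4.11]{AHL} -- as already used in the proof of Theorem~\ref{thm:symp_cpn_spinors} -- one has $W = \vecspan_{\mathbb{C}}\{\omega^k, y_1\wedge\omega^k\}_{k=0,\dots,n}$, and writing $\Sigma_2 = \vecspan_{\mathbb{C}}\{\hat{1},\hat{y}_1\}$ it remains to compute the kernel of $(\phi_s)_*(\xi_1)$ on $W\otimes\Sigma_2$.

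Next I would decompose $(\phi_s)_*(\xi_1) = \bigl(\widetilde{\sigma}_*(\xi_1),\, \tfrac{s}{2}\hat{e}_1\cdot\hat{e}_2\bigr) \in \mathfrak{spin}(4n+2)\oplus\mathfrak{spin}(2)$, where $\widetilde{\sigma}_*(\xi_1)$ is exactly the element already computed in the proof of Theorem~\ref{thm:symp_cpn_spinors} -- so that $\widetilde{\sigma}_*(\xi_1)\cdot\omega^k = \mathrm{i}(n+1-2k)\omega^k$ and $\widetilde{\sigma}_*(\xi_1)\cdot(y_1\wedge\omega^k) = \mathrm{i}(n-1-2k)(y_1\wedge\omega^k)$ -- and $\tfrac{s}{2}\hat{e}_1\cdot\hat{e}_2$ is the spin lift of $(\varphi_s)_*(\xi_1)$. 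Using \eqref{eq:def_action} on the one-dimensional model of $\Sigma_2$ gives $\hat{e}_1\cdot\hat{e}_2\cdot\hat{1} = \mathrm{i}\hat{1}$ and $\hat{e}_1\cdot\hat{e}_2\cdot\hat{y}_1 = -\mathrm{i}\hat{y}_1$, so with $s = 2s'$ the tensor-product action yields
\[
(\phi_s)_*(\xi_1)\cdot(\omega^k\otimes\hat{1}) = \mathrm{i}(n+1-2k+s')\,\omega^k\otimes\hat{1},
\]
and analogously eigenvalue $\mathrm{i}(n-1-2k+s')$ on $(y_1\wedge\omega^k)\otimes\hat{1}$, eigenvalue $\mathrm{i}(n+1-2k-s')$ on $\omega^k\otimes\hat{y}_1$, and eigenvalue $\mathrm{i}(n-1-2k-s')$ on $(y_1\wedge\omega^k)\otimes\hat{y}_1$.

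Finally, a non-zero invariant spinor exists exactly when one of these integers vanishes for a value of $k$ making the corresponding basis vector non-zero, i.e.\ $0 \leq k \leq n$ (note $\omega^{n+1}=0$, since $\omega$ involves only $y_2,\dots,y_{2n+1}$); solving the four equations gives the exponents $\tfrac{n\pm1\pm s'}{2}$, which are integers exactly when $n\not\equiv s'\bmod 2$, and with the stated convention that out-of-range powers of $\omega$ vanish this produces the claimed description of $\bigl(\Sigma_{4n+2,2}^{1}\bigr)_{\mathrm{inv}}$; when $n\equiv s'\bmod 2$ no candidate exponent is an integer, so the space is zero. For the spinor type: when $n$ is odd, Theorem~\ref{thm:symp_cpn_spinors} shows the ($r=1$) spin structure already carries invariant spinors, so $\sigma(\mathbb{CP}^{2n+1},\Sp(n+1))=1$; when $n$ is even, $r=1$ gives nothing, but taking $s'=1$ (so $s=2\in 2\Z$, admissible by Theorem~\ref{thm_inv_gen_spin_structures_SpnU1}) makes $\omega^{(n+2)/2}\otimes\hat{1}$ a non-zero invariant spin$^{\mathbb{C}}$ spinor since $(n+2)/2 \leq n$ for $n\geq 2$, so $\sigma=2$. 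The essentially new input beyond Theorem~\ref{thm:symp_cpn_spinors} is only the two-dimensional $\mathfrak{spin}(2)$-twist computation and the bookkeeping of which of the four exponent families lie in $[0,n]$ for a given $s'$; I do not expect any genuine obstacle here.
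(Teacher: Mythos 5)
Your proposal is correct and follows exactly the route the paper intends: the paper gives no separate proof for this theorem, only noting that ``Using the same argument as in the Hermitian case (see Theorem~\ref{thm:inv_gen_cpn}), one obtains'' the result, and you have supplied precisely that argument, reusing the $\mathfrak{sp}(n)$-invariance computation from Theorem~\ref{thm:symp_cpn_spinors} and the eigenvalue bookkeeping with the $\mathfrak{spin}(2)$-twist $s'\,\hat{e}_1\cdot\hat{e}_2$. All the signs and eigenvalues check out against the Hermitian-case template, and the deduction of $\sigma(\mathbb{CP}^{2n+1},\Sp(n+1))$ via Theorem~\ref{thm:symp_cpn_spinors} for $n$ odd and the explicit nonzero spinor $\omega^{(n+2)/2}\otimes\hat{1}$ at $s'=1$ for $n$ even is exactly right.
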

		\begin{Remark}
			The spin$^{\C}$ structure corresponding to $s=0$ is the one induced by the usual spin structure. Indeed, taking $s=0$ in Theorem~\ref{thm:inv_gen_cpn2}, one recovers the spinors in Theorem~\ref{thm:symp_cpn_spinors} tensored with $\Sigma_2$.
		\end{Remark}
		
		\subsubsection[Special spin\^{}r spinors]{Special spin$^{\boldsymbol{r}}$ spinors}
		
		In order to differentiate these spinors, one can see, using the formulas for the Nomizu map from~\cite[p.\ 141]{BFGK}, that the spin lift $\widetilde{\Uplambda}^{a,t}$ of the Nomizu map of the Levi-Civita connection of $g_{a,t}$ is given by
		\begin{gather}
				\widetilde{\Uplambda}^{a,t} \bigl( \xi_2^{a,t} \bigr) = \frac{1-t}{2\sqrt{2at}} \sum_{p=1}^n \bigl( e_{4p}^{a,t} \cdot e_{4p+2}^{a,t} - e_{4p+1}^{a,t} \cdot e_{4p+3}^{a,t} \bigr) ,\nonumber \\
				\widetilde{\Uplambda}^{a,t} \bigl( \xi_3^{a,t} \bigr) = \frac{1-t}{2\sqrt{2at}} \sum_{p=1}^n \bigl( e_{4p}^{a,t} \cdot e_{4p+3}^{a,t} + e_{4p+1}^{a,t} \cdot e_{4p+2}^{a,t} \bigr) ,\nonumber \\
				\widetilde{\Uplambda}^{a,t} \bigl( e_{4p}^{a,t} \bigr) = \frac{1}{2}\sqrt{\frac{t}{2a}} \bigl( - \xi_2^{a,t} \cdot e_{4p+2}^{a,t} - \xi_3^{a,t} \cdot e_{4p+3}^{a,t} \bigr) , \nonumber\\
				\widetilde{\Uplambda}^{a,t} \bigl( e_{4p+1}^{a,t} \bigr) = \frac{1}{2}\sqrt{\frac{t}{2a}} \bigl( \xi_2^{a,t} \cdot e_{4p+3}^{a,t} - \xi_3^{a,t} \cdot e_{4p+2}^{a,t} \bigr) ,\nonumber\\
				\widetilde{\Uplambda}^{a,t} \bigl( e_{4p+2}^{a,t} \bigr) = \frac{1}{2}\sqrt{\frac{t}{2a}} \bigl( \xi_2^{a,t} \cdot e_{4p}^{a,t} + \xi_3^{a,t} \cdot e_{4p+1}^{a,t} \bigr) , \nonumber\\
			\widetilde{\Uplambda}^{a,t} \bigl( e_{4p+3}^{a,t} \bigr) = \frac{1}{2}\sqrt{\frac{t}{2a}} \bigl( - \xi_2^{a,t} \cdot e_{4p+1}^{a,t} + \xi_3^{a,t} \cdot e_{4p}^{a,t} \bigr) .\label{eq:symp_nomizu}
		\end{gather}
		
		Baum et al.\ proved in \cite[p.\ 146]{BFGK} that $\mathbb{CP}^3$ admits non-trivial $\Sp(2)$-invariant generalised Killing spinors, given by $\psi_{+} \pm {\rm i} \psi_{-}$. For the Fubini--Study metric, the two eigenvalues of these generalised Killing spinors coincide, yielding real Killing spinors which are related to the nearly K\"{a}hler geometry of $\mathbb{CP}^3$. We now show that this does not occur in higher dimensions.
		\begin{Theorem}
			The spaces $\mathbb{CP}^{2n+1}$ admit non-trivial $\Sp(n+1)$-invariant generalised Killing spinors if, and only if, $n=1$.
		\end{Theorem}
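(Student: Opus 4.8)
The plan is to treat separately the parity of $n$. If $n$ is even, Theorem~\ref{thm:symp_cpn_spinors} already shows that $\mathbb{CP}^{2n+1}$ carries no non-trivial $\Sp(n+1)$-invariant spinors whatsoever, so the statement is vacuous; and if $n=1$, the required non-trivial invariant generalised Killing spinors are exactly the ones $\psi_+ \pm {\rm i}\psi_-$ exhibited by Baum et al.\ and recalled above. Thus the content of the theorem is that for $n \geq 3$ odd, none of the metrics $g_{a,t}$ on $\mathbb{CP}^{2n+1}$ admits a non-trivial $\Sp(n+1)$-invariant generalised Killing spinor, and this is what I would prove.

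Fix $n \geq 3$ odd and suppose $\psi$ is such a spinor, with $\nabla^{g_{a,t}}_{X}\psi = A(X)\cdot\psi$ for a symmetric endomorphism field $A$. By Theorem~\ref{thm:symp_cpn_spinors} we may write $\psi = \alpha\psi_+ + \beta\psi_-$ with $\alpha,\beta\in\mathbb{C}$. First I would simplify $A$: because $\psi$ and $\nabla^{g_{a,t}}$ are $\Sp(n+1)$-invariant and the isotropy group $H = \U(1)\times\Sp(n)$ is compact and connected, averaging $A_{eH}$ over $H$ yields an $\Ad_H$-invariant symmetric endomorphism of $\mathfrak{m}$ still satisfying the Killing equation at $eH$ (here one uses that $\psi$ is $\mathfrak{h}$-annihilated and that the Levi-Civita Nomizu map is $\Ad_H$-equivariant). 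Since $\mathfrak{m} = \mathcal{V}\oplus\mathcal{H}$ is a direct sum of non-isomorphic irreducibles, an invariant endomorphism is block-diagonal by Schur's lemma, and a self-adjoint invariant endomorphism of an irreducible summand has invariant eigenspaces, hence is scalar; therefore $A_{eH} = \mu\,\pr_{\mathcal{V}} + \nu\,\pr_{\mathcal{H}}$ for some $\mu,\nu\in\mathbb{R}$. By the description of the spinorial Levi-Civita connection on a homogeneous space (cf.\ Proposition~\ref{prop:lifted_nomizu}) together with the Nomizu formulas~\eqref{eq:symp_nomizu}, the generalised Killing equation restricted to the horizontal space becomes $\widetilde{\Uplambda}^{a,t}(X)\cdot\psi = \nu\,X\cdot\psi$ for all $X\in\mathcal{H}$, and I claim that just one of these equations, namely for $X = e_{4p}^{a,t}$ with $p$ fixed, forces $\psi = 0$.

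To establish the claim, I would rewrite the Clifford products appearing in~\eqref{eq:symp_nomizu} in terms of wedge and contraction against the $y_j$, $x_j$ of~\eqref{eq:def_action}, obtaining $\widetilde{\Uplambda}^{a,t}(e_{4p}^{a,t})\cdot\eta = 2d\bigl(x_1\lrcorner(y_{2p+1}\wedge\eta) + y_1\wedge(x_{2p+1}\lrcorner\eta)\bigr)$ with $d = \frac{1}{2}\sqrt{t/(2a)} > 0$, while $e_{4p}^{a,t}\cdot\eta = {\rm i}\bigl(x_{2p}\lrcorner\eta + y_{2p}\wedge\eta\bigr)$. Evaluating these on $\psi_{\pm}$ using the identities $x_{2p}\lrcorner\omega^{k} = k\,y_{2p+1}\wedge\omega^{k-1}$ and $x_{2p+1}\lrcorner\omega^{k} = -k\,y_{2p}\wedge\omega^{k-1}$, one sees that both sides of $\widetilde{\Uplambda}^{a,t}(e_{4p}^{a,t})\cdot\psi = \nu\,e_{4p}^{a,t}\cdot\psi$ are linear combinations of the four exterior monomials $y_1\wedge y_{2p}\wedge\omega^{(n-1)/2}$, $y_{2p+1}\wedge\omega^{(n-1)/2}$, $y_{2p}\wedge\omega^{(n+1)/2}$, $y_1\wedge y_{2p+1}\wedge\omega^{(n-3)/2}$; the left-hand side involves only the first two, whereas on the right-hand side the third and fourth occur with coefficients $\nu\alpha{\rm i}$ and $-\frac{n-1}{2}\nu\beta{\rm i}$. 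The decisive observation is that for $n \geq 3$ odd these last two monomials are nonzero -- in contrast with $n = 1$, where $y_{2p}\wedge\omega^{(n+1)/2} = y_{2p}\wedge\omega = 0$ and $\omega^{(n-3)/2} = 0$, which is precisely why Friedrich's $\mathbb{CP}^3$ construction exists but does not persist. Hence comparison of coefficients gives $\nu\alpha = \nu\beta = 0$; if $\nu \neq 0$ this yields $\alpha = \beta = 0$ at once, and if $\nu = 0$ the coefficients of the first two monomials give $\alpha\,d(n+1) = 0$ and $2\beta d = 0$, again forcing $\alpha = \beta = 0$. Either way $\psi = 0$, a contradiction.

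The only genuinely laborious step is the third one: keeping track of exactly which exterior monomials arise on each side of the equation and confirming that the two ``extra'' monomials (of form-degree $n+3$ and $n-1$) survive exactly for $n \geq 3$. Conceptually this is the statement that the horizontal part of the invariant generalised Killing system is over-determined in complex dimensions above three, which is the obstruction to extending Friedrich's construction.
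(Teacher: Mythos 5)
Your proof is correct, and it takes a genuinely slightly different route from the paper's. The paper does not perform the averaging/Schur reduction on $A$; instead, it lets $A(e_{4p}^{a,t})$ be an \emph{arbitrary} element $X = \mu_2\xi_2^{a,t}+\mu_3\xi_3^{a,t}+\sum_p(\mu_{4p}e_{4p}^{a,t}+\cdots)$ of $\mathfrak{m}$, expands $X\cdot(\alpha\psi_++\beta\psi_-)$ in full, and equates coefficients against $\widetilde{\Uplambda}^{a,t}(e_{4p}^{a,t})\cdot(\alpha\psi_++\beta\psi_-)$ to conclude $\alpha=\beta=0$ when $n\geq 3$. By contrast, you first invoke an averaging argument (valid: since $\widetilde{\Uplambda}$ is $\Ad_H$-equivariant and $\psi$ is $H$-fixed, one verifies $\bigl(\sigma(h)^{-1}A(\sigma(h)X)\bigr)\cdot\psi = A(X)\cdot\psi$ for all $h\in H$, so the $H$-average $\bar A$ of $A$ still satisfies the Killing equation, remains symmetric because $\sigma(h)$ is orthogonal, and is $\Ad_H$-equivariant) followed by Schur's lemma to pin $A_{eH}$ down to $\mu\,\pr_{\mathcal{V}}+\nu\,\pr_{\mathcal{H}}$, after which the coefficient comparison involves only the four monomials $y_1\wedge y_{2p}\wedge\omega^{(n-1)/2}$, $y_{2p+1}\wedge\omega^{(n-1)/2}$, $y_{2p}\wedge\omega^{(n+1)/2}$, $y_1\wedge y_{2p+1}\wedge\omega^{(n-3)/2}$ rather than the larger list the paper handles. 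The heart of the argument --- that the last two monomials, of degrees $n+2$ and $n-1$, are non-zero exactly when $n\geq 3$ but vanish for $n=1$, which is why Friedrich's $\mathbb{CP}^3$ example does not propagate --- is identical in both treatments, and your explicit expressions for $\widetilde{\Uplambda}^{a,t}(e_{4p}^{a,t})\cdot\eta$ and $e_{4p}^{a,t}\cdot\eta$ and the contraction identities $x_{2p}\lrcorner\omega^k = k\,y_{2p+1}\wedge\omega^{k-1}$, $x_{2p+1}\lrcorner\omega^k = -k\,y_{2p}\wedge\omega^{k-1}$ check out against the paper's conventions. What your route buys is a cleaner final coefficient comparison at the cost of the extra averaging lemma; what the paper's route buys is directness, since it never needs to know that a $G$-invariant $A$ can be assumed.
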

		\begin{proof}
			First, we recall from Theorem~\ref{thm:symp_cpn_spinors} that there are no invariant spinors when $n$ is even, so it remains only to consider the case where $n$ is odd. Let $n$ be odd, and suppose that $n \geq 3$ so that $\omega^{(n \pm 3)/2} \neq 0$. Using the above formulas \eqref{eq:symp_nomizu} for the Nomizu map, we get, for $\alpha, \beta \in \C$,
			\[
			\widetilde{\Uplambda}^{a,t} \bigl( e_{4p}^{a,t} \bigr) \cdot ( \alpha \psi_+ + \beta \psi_- ) = -\sqrt{\frac{t}{2a}} \left\{ \alpha \frac{n+1}{2} y_1 \wedge y_{2p} + \beta y_{2p+1} \right\} \wedge \omega^{(n-1)/2} .
			\]
			Writing a general element $X \in \mathfrak{m}$ as a (real) linear combination
			\[ X = \mu_2 \xi_2^{a,t} + \mu_3 \xi_3^{a,t} + \sum_{p=1}^n \bigl( \mu_{4p} e_{4p}^{a,t} + \mu_{4p+1} e_{4p+1}^{a,t} + \mu_{4p+2} e_{4p+2}^{a,t} + \mu_{4p+3} e_{4p+3}^{a,t} \bigr) \]
			of the basis vectors, we find that the Clifford product with an arbitrary invariant spinor is given~by
			\begin{gather*}
				X \cdot ( \alpha \psi_+ + \beta \psi_- ) \\
				\qquad= \alpha \Bigg\{ ({\rm i} \mu_2 + \mu_3) y_1 + \sum_{p=1}^n[ ({\rm i} \mu_{4p} + \mu_{4p+1}) y_{2p} + ({\rm i} \mu_{4p+2}+\mu_{4p+3}) y_{2p+1} ]\Bigg\} \wedge \omega^{(n+1)/2} \\
				\phantom{\qquad=}{}+ \alpha \frac{n+1}{2} \Bigg\{ \sum_{p=1}^n[({\rm i} \mu_{4p} - \mu_{4p+1}) y_{2p+1} + (-{\rm i} \mu_{4p+2}+\mu_{4p+3}) y_{2p} ] \Bigg\} \wedge \omega^{(n-1)/2} \\
				\phantom{\qquad=}{}+ \beta \{ {\rm i} \mu_2 - \mu_3 \} \omega^{(n-1)/2} \\
				\phantom{\qquad=}{}+ \beta y_1 \wedge \Bigg\{ \sum_{p=1}^n[(-{\rm i} \mu_{4p} - \mu_{4p+1}) y_{2p} + (-{\rm i} \mu_{4p+2}-\mu_{4p+3}) y_{2p+1} ] \Bigg\} \wedge \omega^{(n-1)/2} \\
				\phantom{\qquad=}{}+ \beta \frac{n-1}{2} \Bigg\{ \sum_{p=1}^n[(-{\rm i} \mu_{4p} + \mu_{4p+1}) + ({\rm i} \mu_{4p+2}-\mu_{4p+3}) ] \Bigg\} y_1 \wedge \omega^{(n-3)/2} .
			\end{gather*}
			Hence, by equating coefficients in
			\begin{equation*}
				\widetilde{\Uplambda}^{a,t} \bigl( e_{4p}^{a,t} \bigr) \cdot ( \alpha \psi_+ + \beta \psi_- ) = X \cdot ( \alpha \psi_+ + \beta \psi_- ) ,
			\end{equation*}
			one easily concludes (using crucially that $n \geq 3$) that the only possibility is $\alpha = \beta = 0$, and the result then follows from the preceding discussion about the case $n=1$.
		\end{proof}
		
		We now turn to the study of the $\Sp(n+1)$-invariant spin$^{\mathbb{C}}$ spinors on $\mathbb{CP}^{2n+1}$ found in Theorem~\ref{thm:inv_gen_cpn2}. The aim is to show that, when $s' = s/2 = \pm n \pm 1$ and $t=1$, there is a pure spinor which is parallel with respect to a suitable connection on the auxiliary bundle. This encodes the fact that, for $t=1$, the metric $g_{a,t}$ is K\"{a}hler.
		
		\begin{Lemma}\label{lemma:cpn_pure_2}
			Let $k \in \mathbb{N}$ and $\psi \in \bigl\{ \omega^k \otimes \hat{1}, \bigl(y_1 \wedge \omega^k\bigr) \otimes \hat{1} , \omega^k \otimes \hat{y}_1 , \bigl(y_1 \wedge \omega^k\bigr) \otimes \hat{y}_1 \bigr\}$. Then, a~scalar multiple of $\psi$ is pure if, and only if, $k=0$ or $k=n$.
		\end{Lemma}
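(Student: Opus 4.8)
The key observation is that, since $r=2$, the purity condition on $\psi$ reduces to the single equation $\bigl(\hat{\eta}_{12}^{\psi}\bigr)^2 = -\Id_{\R^{4n+2}}$, and since $\eta_{12}^{c\psi} = |c|^2\,\eta_{12}^{\psi}$ for every scalar $c$, a scalar multiple of $\psi$ is pure if and only if $\bigl(\hat{\eta}_{12}^{\psi}\bigr)^2$ is a strictly negative multiple of the identity. So the plan is to compute the skew-symmetric endomorphism $\hat{\eta}_{12}^{\psi}$ explicitly and to determine for which $k$ its square is $-\mu\,\Id$ with $\mu>0$.

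First I would reduce everything to a computation inside $\Sigma_{4n+2}$. Writing $\psi = \phi\otimes\chi$ with $\phi\in\{\omega^k,\, y_1\wedge\omega^k\}$ and $\chi\in\{\hat{1},\hat{y}_1\}$, one reads off from \eqref{eq:def_action} (applied with $r=2$) that $\hat{e}_1\cdot\hat{e}_2$ acts on $\Sigma_2$ as multiplication by ${\rm i}$ on $\hat{1}$ and by $-{\rm i}$ on $\hat{y}_1$. Hence, for distinct elements $X,Y$ of the $g_{a,t}$-orthonormal basis $(f_i) \coloneqq \bigl(\xi_2^{a,t},\xi_3^{a,t},e_4^{a,t},\dots,e_{4n+3}^{a,t}\bigr)$ of $\mathfrak{m}$, using $X\wedge Y = X\cdot Y$, one gets $\eta_{12}^{\psi}(X,Y) = \mp\,\Im\langle X\cdot Y\cdot\phi,\phi\rangle$, the overall sign depending only on $\chi$ and being irrelevant for the square of $\hat{\eta}_{12}^{\psi}$.

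The core step is an algebraic computation in the exterior-form model, using $y_j = \frac{1}{\sqrt{2}}(f_{2j-1}+{\rm i}f_{2j})$ and the rules $y_j\cdot = {\rm i}\sqrt{2}\,(y_j\wedge)$, $x_j\cdot = {\rm i}\sqrt{2}\,(x_j\lrcorner)$. One shows that $\langle f_{2j-1}\cdot f_{2j}\cdot\phi,\phi\rangle = {\rm i}\bigl(|\phi|^2 - 2\,|x_j\lrcorner\phi|^2\bigr)$ for every $j$, while $\langle f_a\cdot f_b\cdot\phi,\phi\rangle = 0$ whenever $\{a,b\}\neq\{2j-1,2j\}$ for all $j$; the latter holds because $\phi$ is of pure degree (so the degree-changing pieces of $f_a\cdot f_b\cdot\phi$ pair trivially with $\phi$) and because, for $j\neq l$, the forms $x_j\lrcorner\phi$ and $x_l\lrcorner\phi$ are mutually orthogonal, differing in which $y$'s they contain. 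Consequently $\hat{\eta}_{12}^{\psi}$ is block-diagonal for the orthogonal splitting $\mathfrak{m} = \bigoplus_{j=1}^{2n+1}\Span\{f_{2j-1},f_{2j}\}$, the $j$-th block being a rotation with scalar $a_j \coloneqq 2\,|x_j\lrcorner\phi|^2 - |\phi|^2$, so that $\bigl(\hat{\eta}_{12}^{\psi}\bigr)^2 = -a_j^2\,\Id$ there. Thus a scalar multiple of $\psi$ is pure precisely when all the $a_j$ have the same nonzero absolute value.

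It remains to evaluate the $a_j$, using the identity $|\omega^l|^2 = (l!)^2\binom{n}{l}$ (the $n$ summands of $\omega$ being mutually ``disjoint'' wedges) together with the derivation identities $x_1\lrcorner\omega^k = 0$, $x_{2p}\lrcorner\omega^k = k\,y_{2p+1}\wedge\omega_p^{k-1}$ and $x_{2p+1}\lrcorner\omega^k = -k\,y_{2p}\wedge\omega_p^{k-1}$, where $\omega_p \coloneqq \sum_{q\neq p}y_{2q}\wedge y_{2q+1}$, and noting $\bigl|y_{2p\pm1}\wedge\omega_p^{k-1}\bigr|^2 = |\omega_p^{k-1}|^2 = \bigl((k-1)!\bigr)^2\binom{n-1}{k-1}$. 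A short case check over the four choices of $(\phi,\chi)$ then gives $|a_1| = (k!)^2\binom{n}{k}$ for the ``vertical'' block $j=1$ and $|a_j| = (k!)^2\bigl|\binom{n-1}{k-1}-\binom{n-1}{k}\bigr|$ for the ``horizontal'' blocks $j\geq2$. By Pascal's rule, $|a_1| = |a_j|$ is equivalent to $\binom{n-1}{k-1}+\binom{n-1}{k} = \bigl|\binom{n-1}{k-1}-\binom{n-1}{k}\bigr|$, i.e.\ to $\binom{n-1}{k-1}\binom{n-1}{k} = 0$, which for $0\leq k\leq n$ holds if and only if $k=0$ or $k=n$; and in those cases $|a_j| = (k!)^2>0$ for all $j$, so a scalar multiple of $\psi$ is genuinely pure. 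The main obstacle I anticipate is the bookkeeping in the core step --- pinning down the block structure of $\hat{\eta}_{12}^{\psi}$, and in particular verifying that every off-pair component $\eta_{12}^{\psi}(f_a,f_b)$ vanishes, which requires carefully tracking which wedge/contraction terms survive the Hermitian pairing and the $y$-content orthogonality of the $x_j\lrcorner\phi$; the contraction formulae for $\omega^k$ in the final step are routine but need care with the derivation property and the passage from $\omega$ to $\omega_p$.
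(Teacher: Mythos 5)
Your proof is correct and follows essentially the same strategy as the paper's: both compute the block structure of $\hat{\eta}^{\psi}_{12}$ with respect to the orthonormal basis pairing $(f_{2j-1},f_{2j})$, evaluate the diagonal rotation coefficients via contraction identities for $\omega^k$ and the norm $|\omega^\ell|^2=(\ell!)^2\binom{n}{\ell}$, and reduce to a binomial identity settled by Pascal's rule. Your version is slightly more systematic in deriving the uniform scalar $a_j = 2\,|x_j\lrcorner\phi|^2-|\phi|^2$ and justifying the vanishing of the cross-block entries via orthogonality of the $x_j\lrcorner\phi$, whereas the paper checks the relevant basis pairs one at a time, but the computations and conclusion coincide.
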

		\begin{proof}
			We will only prove it for $\psi = \omega^k \otimes \hat{1}$, as the other cases are analogous. Since $r=2<3$, we only need to show that \smash{$\bigl( \hat{\eta}_{12}^{\psi} \bigr)^2 = - \Id$}. Indeed, for all $1 \leq p , q \leq n$ and $\varepsilon \in \{0,1,2,3\}$, one calculates:
			\begin{gather*}
				\eta^{\psi}_{12} \bigl(\xi^{a,t}_{2}, \xi^{a,t}_{3}\bigr) = \Re \big\langle \xi^{a,t}_{2} \cdot \xi^{a,t}_{3} \cdot \hat{e}_1 \cdot \hat{e}_2 \cdot \psi , \psi \big\rangle = \Re \big\langle {\rm i} \xi^{a,t}_{2} \cdot \xi^{a,t}_{3} \cdot \psi , \psi\big\rangle \\
				 \phantom{\eta^{\psi}_{12} \bigl(\xi^{a,t}_{2}, \xi^{a,t}_{3}\bigr) }{} = - \Re \big\langle \omega^k \otimes \hat{1} , \omega^k \otimes \hat{1}\big\rangle = -(k!)^2 \binom{n}{k} , \\
				\eta^{\psi}_{12} \bigl(e^{a,t}_{4p}, e^{a,t}_{4q+1}\bigr) = \Re \big\langle e^{a,t}_{4p} \cdot e^{a,t}_{4q+1} \cdot \hat{e}_1 \cdot \hat{e}_2 \cdot \psi , \psi \big\rangle = \Re \big\langle {\rm i} \cdot e^{a,t}_{4p} \cdot e^{a,t}_{4q+1} \cdot \omega^k , \omega^k \big\rangle \\
				 \phantom{\eta^{\psi}_{12} \bigl(e^{a,t}_{4p}, e^{a,t}_{4q+1}\bigr) }{} = - \delta_{p,q} \big\langle\omega^k - 2 k y_{2p} \wedge y_{2p+1} \wedge \omega^{k-1}, \omega^k \big\rangle = -\delta_{p,q}(k!)^2 \left[ \binom{n}{k} - 2 \binom{n-1}{k-1} \right] , \\
				\eta^{\psi}_{12} \bigl(e^{a,t}_{4p+2}, e^{a,t}_{4q+3}\bigr) = -\delta_{p,q}(k!)^2 \left[ \binom{n}{k} - 2 \binom{n-1}{k-1} \right] , \\
				\eta^{\psi}_{12} \bigl(\xi^{a,t}_{2}, e^{a,t}_{4p + \varepsilon}\bigr) = \eta^{\psi}_{12} \bigl(\xi^{a,t}_{3}, e^{a,t}_{4p + \varepsilon}\bigr) = \eta^{\psi}_{12} \bigl(e^{a,t}_{4p}, e^{a,t}_{4q+2}\bigr) = \eta^{\psi}_{12} \bigl(e^{a,t}_{4p}, e^{a,t}_{4q+3}\bigr)\\
\phantom{\eta^{\psi}_{12} \bigl(\xi^{a,t}_{2}, e^{a,t}_{4p + \varepsilon}\bigr)}{} = \eta^{\psi}_{12} \bigl(e^{a,t}_{4p+1}, e^{a,t}_{4q+3}\bigr) = 0 ,
			\end{gather*}
			where $\binom{n-1}{k-1}$ is understood to be $0$ if $k = 0$. Altogether, we have
			\[
			\hat{\eta}^{\psi}_{12} = -(k!)^2 \Bigg[ \binom{n}{k} \xi^{a,t}_2 \wedge \xi^{a,t}_3 + \left[ \binom{n}{k} - 2 \binom{n-1}{k-1} \right] \sum_{p=1}^{n} \bigl( e^{a,t}_{4p} \wedge e^{a,t}_{4p+1} + e^{a,t}_{4p+2} \wedge e^{a,t}_{4p+3}\bigr) \Bigg] ,
			\]
			which is easily seen to square to a multiple of $- \Id$ if, and only if, $k=0$ or $k=n$.
		\end{proof}
		
		The preceding lemma, together with Theorem~\ref{thm:inv_gen_cpn2} describing the invariant spin$^{\C}$ spinors, implies that the $\Sp(n+1)$-invariant spin$^{\mathbb{C}}$ structure corresponding to $s = 2s' \in 2\Z$ admits invariant pure spin$^{\mathbb{C}}$ spinors if, and only if, $s' \in \{n+1,-n-1,n-1,-n+1\}$, which are given in each case by
		\begin{gather*}
			\bigl\{ ( y_1 \wedge \omega^n ) \otimes \hat{1}, 1 \otimes \hat{y}_1 \bigr\},\qquad s' = n+1, \qquad
			\bigl\{ 1 \otimes \hat{1}, (y_1 \wedge \omega^n) \otimes \hat{y}_1 \bigr\},\qquad s' = -n-1, \\
			\bigl\{ \omega^n \otimes \hat{1}, y_1 \otimes \hat{y}_1 \bigr\},\qquad s' = n-1, \qquad
			\bigl\{ y_1 \otimes \hat{1}, \omega^n \otimes \hat{y}_1 \bigr\} ,\qquad s' = -n+1.
		\end{gather*}
		
		In order to differentiate these spinors, one needs to fix a connection on the auxiliary bundle. Applying the criterion in Lemma~\ref{prelims:Nomizu_criterion}, one sees that the only $\Sp(n+1)$-invariant connection on the auxiliary bundle is the one with Nomizu map $\Uplambda \rvert_{\mathfrak{m}} = 0$. This connection, together with the Levi-Civita connection of the metric $g_{a,t}$ (with Nomizu map $\Uplambda^{a,t}$), induces a connection $\nabla^{a,t}$ on the corresponding spin$^{\mathbb{C}}$ spinor bundle. The following lemma is a straightforward calculation using the expression of the spin lift of the Nomizu map \eqref{eq:symp_nomizu}:
		
		\begin{Lemma}\label{lemma:cp2n+1_parallel}
			The invariant pure spin$^{\mathbb{C}}$ spinor $1 \otimes \hat{1}$ is $\nabla^{a,t}$-parallel if, and only if, $t=1$.
		\end{Lemma}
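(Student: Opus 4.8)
The plan is to invoke Proposition~\ref{prop:lifted_nomizu}. Here $r=2$ and $m=1$, and the connection $\nabla^{a,t}$ on the $m$-twisted spin$^{\mathbb C}$ bundle is built from the Levi-Civita connection of $g_{a,t}$ and the invariant connection on the auxiliary bundle whose Nomizu map satisfies $\Uplambda'\rvert_{\mathfrak{m}}=0$; hence its spin lift $\widetilde{\Uplambda'}$ also vanishes on $\mathfrak{m}$, and for $X\in\mathfrak{m}$ the Nomizu map of $\nabla^{a,t}$ acts on $\psi=1\otimes\hat{1}$ by $X\mapsto\bigl(\widetilde{\Uplambda}^{a,t}(X)\cdot 1\bigr)\otimes\hat{1}$. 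Consequently $\psi$ is $\nabla^{a,t}$-parallel if and only if $\widetilde{\Uplambda}^{a,t}(X)\cdot 1=0$ in $\Sigma_{4n+2}$ for every $X$ in the $g_{a,t}$-orthonormal basis $\bigl(\xi_2^{a,t},\xi_3^{a,t},e_4^{a,t},\dots,e_{4n+3}^{a,t}\bigr)$ of $\mathfrak{m}$, which I would identify with the standard basis $(e_1,\dots,e_{4n+2})$ of $\mathbb{R}^{4n+2}$ underlying the exterior forms model of $\Sigma_{4n+2}$ (so $\xi_2^{a,t}=e_1$, $\xi_3^{a,t}=e_2$, and $e_{4p+\varepsilon}^{a,t}=e_{4p+\varepsilon-1}$ for $\varepsilon\in\{0,1,2,3\}$, $p=1,\dots,n$).

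The computation then proceeds basis vector by basis vector, substituting the explicit formulas \eqref{eq:symp_nomizu} for $\widetilde{\Uplambda}^{a,t}$ and using the Clifford action \eqref{eq:def_action} on the vacuum $1\in\Lambda^{\bullet}L'$, for which $e_{2j-1}\cdot 1={\rm i}\,y_j$ and $e_{2j}\cdot 1=y_j$. For each horizontal basis vector $e_{4p+\varepsilon}^{a,t}$, formula \eqref{eq:symp_nomizu} expresses $\widetilde{\Uplambda}^{a,t}(e_{4p+\varepsilon}^{a,t})$ as a sum of two terms $\pm\xi_i^{a,t}\cdot e_{4p+\varepsilon'}^{a,t}$; applying each term to $1$ produces, up to a sign and a power of ${\rm i}$, the same multiple of $y_1\wedge y_j$ with $j\in\{2p,2p+1\}$, and the two contributions cancel, so $\widetilde{\Uplambda}^{a,t}(e_{4p+\varepsilon}^{a,t})\cdot 1=0$ for every $t$. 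It remains to treat the two vertical vectors $\xi_2^{a,t},\xi_3^{a,t}$, whose images under $\widetilde{\Uplambda}^{a,t}$ carry the overall scalar $\tfrac{1-t}{2\sqrt{2at}}$; a short calculation of the same type gives
\[
\widetilde{\Uplambda}^{a,t}\bigl(\xi_2^{a,t}\bigr)\cdot 1=\frac{t-1}{\sqrt{2at}}\,\omega,\qquad
\widetilde{\Uplambda}^{a,t}\bigl(\xi_3^{a,t}\bigr)\cdot 1=\frac{{\rm i}(1-t)}{\sqrt{2at}}\,\omega,
\]
where $\omega=\sum_{p=1}^{n}y_{2p}\wedge y_{2p+1}\neq 0$.

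Combining these observations proves the lemma: if $\psi$ is $\nabla^{a,t}$-parallel then in particular $\widetilde{\Uplambda}^{a,t}(\xi_2^{a,t})\cdot 1=0$, which forces $t=1$; conversely, if $t=1$ then the vertical contributions vanish because the scalar $1-t$ is zero, the horizontal contributions vanish by the cancellation noted above, and hence $\widetilde{\Uplambda}^{a,1}(X)\cdot\psi=0$ for all $X\in\mathfrak{m}$, i.e., $\psi$ is $\nabla^{a,1}$-parallel. There is no genuine obstacle here — the argument is a direct computation — and the only step demanding care is the bookkeeping: correctly matching the orthonormal basis of $\mathfrak{m}$ with the model basis of $\mathbb{R}^{4n+2}$, and tracking the signs and factors of ${\rm i}$ produced by \eqref{eq:def_action} when composing two Clifford multiplications on the vacuum.
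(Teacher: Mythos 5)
Your proof is correct, and it fills in precisely the ``straightforward calculation using the expression of the spin lift of the Nomizu map~\eqref{eq:symp_nomizu}'' that the paper declares but does not carry out: identify the $g_{a,t}$-orthonormal basis of $\mathfrak{m}$ with the standard basis of $\R^{4n+2}$ in the stated order, note that the auxiliary part of the Nomizu map vanishes on $\mathfrak{m}$ so only the Levi-Civita factor acts, and then check $\widetilde{\Uplambda}^{a,t}(X)\cdot 1=0$ basis vector by basis vector. All your sign bookkeeping checks out — the horizontal terms cancel in pairs for every $t$, while the vertical terms produce the nonzero multiples of $\omega$ you record, vanishing exactly when $t=1$.
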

		
		These spin$^{\C}$ spinors encode some well-known geometric information of $\mathbb{CP}^{2n+1}$ -- see, e.g.,~\cite{ziller}.
		
		\begin{Theorem}
			The metric $g_{a,1}$ on $\mathbb{CP}^{2n+1}$ is K\"{a}hler--Einstein, for all $a > 0$.
		\end{Theorem}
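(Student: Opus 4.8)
The plan is to argue exactly as in the Hermitian case (Theorem~\ref{cor:cpn_conclusion}), this time using the invariant pure spin$^{\mathbb{C}}$ spinor $\psi = 1 \otimes \hat{1}$. First I would fix $s' = -(n+1)$, so that, by the discussion following Lemma~\ref{lemma:cpn_pure_2}, the $\Sp(n+1)$-invariant spin$^{\mathbb{C}}$ structure $\Sp(n+1) \times_{\phi_{-2(n+1)}} \Spin^{\mathbb{C}}(4n+2)$ carries the invariant pure spinor $\psi$. I would then endow its auxiliary bundle with the unique invariant connection, namely the one with Nomizu map $\Uplambda\rvert_{\mathfrak{m}} = 0$ (the unique invariant one, as observed above via Lemma~\ref{prelims:Nomizu_criterion}), and let $\nabla^{a,1}$ be the induced twisted connection on the spin$^{\mathbb{C}}$ spinor bundle of $g_{a,1}$. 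By Lemma~\ref{lemma:cp2n+1_parallel}, $\psi$ is $\nabla^{a,1}$-parallel. Since $r = 2$ and $\psi$ is a non-zero $\nabla^{a,1}$-parallel pure spin$^{\mathbb{C}}$ spinor, Theorem~\ref{thm:herrera}\,(4) shows that $g_{a,1}$ is K\"ahler, with respect to the invariant complex structure defined by $\hat{\eta}_{12}^{\psi}$.

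It then remains to show that $g_{a,1}$ is Einstein. Since $r = 2$ and $\abs{\psi}^2 = 1$, Theorem~\ref{thm:herrera}\,(1) gives $\Ric = \hat{\Theta}_{12} \circ \hat{\eta}_{12}^{\psi}$. The endomorphism $\hat{\eta}_{12}^{\psi}$ is the $k = 0$ case of the formula established in the proof of Lemma~\ref{lemma:cpn_pure_2}, i.e.\ $\hat{\eta}_{12}^{\psi} = -\bigl( \xi_2^{a,1}\wedge\xi_3^{a,1} + \sum_{p=1}^{n} ( e_{4p}^{a,1}\wedge e_{4p+1}^{a,1} + e_{4p+2}^{a,1}\wedge e_{4p+3}^{a,1} ) \bigr)$. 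For $\hat{\Theta}_{12}$ I would use that the curvature of the auxiliary connection is $\Omega(X,Y) = -\Uplambda([X,Y])$ for $X,Y\in\mathfrak{m}$ (since $\Uplambda\rvert_{\mathfrak{m}}=0$), together with the fact that $(\varphi_s)_*$ annihilates $\mathfrak{h}' = \mathfrak{sp}(n)$, so that only the $\mathbb{R}\xi_1$-component of $[X,Y]_{\mathfrak{h}}$ contributes. Computing these brackets in $\mathfrak{sp}(n+1)$ among the orthonormal basis vectors $\{\xi_2^{a,1},\xi_3^{a,1},e_{4p+\varepsilon}^{a,1}\}$ by means of the quaternionic matrix identities of \cite{AHL} yields $\Theta_{12}$, and hence $\hat{\Theta}_{12}$, as a nonzero constant times $\frac{1}{a}\bigl( \xi_2^{a,1}\wedge\xi_3^{a,1} + \sum_{p=1}^{n} ( e_{4p}^{a,1}\wedge e_{4p+1}^{a,1} + e_{4p+2}^{a,1}\wedge e_{4p+3}^{a,1} ) \bigr)$; this is precisely the point at which $t=1$ enters, since then the vertical and horizontal basis vectors carry the same normalisation factor $(2a)^{-1/2}$ and the two blocks of $\hat{\Theta}_{12}$ acquire matching coefficients. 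Composing, $\hat{\Theta}_{12}\circ\hat{\eta}_{12}^{\psi} = \frac{c}{a}\Id$ for an explicit constant $c>0$ (the sign being positive because $s = -2(n+1) < 0$, exactly as in Theorem~\ref{cor:cpn_conclusion}), so $g_{a,1}$ is Einstein, hence K\"ahler--Einstein.

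The main obstacle is the explicit evaluation of $\Omega$, that is, extracting the $\xi_1$-components of the brackets $[X,Y]$ with $X,Y$ ranging over a basis of $\mathcal{V}\oplus\mathcal{H}$; this is the quaternionic analogue of the bracket computation carried out in the proof of Theorem~\ref{cor:cpn_conclusion}, and once it is done the final composition $\hat{\Theta}_{12}\circ\hat{\eta}_{12}^{\psi}$ is immediate. One can in fact sidestep computing $c$: once K\"ahlerness has been obtained spinorially, the Einstein property for all $a$ follows from that for a single value by homothety invariance of the Einstein condition together with $g_{a,1} = a\,g_{1,1}$; but the direct computation above also pins down the Einstein constant.
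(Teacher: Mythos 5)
Your argument is, step for step, the paper's own proof: choose $s=-2(n+1)$, invoke Lemmas~\ref{lemma:cpn_pure_2} and~\ref{lemma:cp2n+1_parallel} to obtain the $\nabla^{a,1}$-parallel pure spinor $\psi=1\otimes\hat{1}$, deduce K\"ahler via Theorem~\ref{thm:herrera}(4), and then use the Ricci decomposition from Theorem~\ref{thm:herrera}(1), computing $\hat{\eta}_{12}^{\psi}$ from the $k=0$ case of Lemma~\ref{lemma:cpn_pure_2} and $\hat{\Theta}_{12}$ from $\Omega(X,Y)=-\Uplambda([X,Y])$ with the only contribution coming from the $\xi_1$-component. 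You leave the Einstein constant as an unspecified $c/a$ while the paper carries the bracket computation through to get $c=2(n+1)$, but your structural claim that the two endomorphisms are proportional is exactly what the paper verifies; the closing homothety remark, while correct, is redundant, because establishing the Einstein property for even a single $a$ already requires the same spinorial Ricci computation, which is uniform in $a$ anyway.
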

		\begin{proof}
			Let $s = -2(n+1)$, and consider the spin$^{\C}$ structure on $\mathbb{CP}^{2n+1}$ determined by $\varphi_s$. By Lemmas \ref{lemma:cpn_pure_2} and \ref{lemma:cp2n+1_parallel}, this structure carries a $\nabla^{a,1}$ parallel pure spin$^{\mathbb{C}}$ spinor $\psi=1 \otimes \hat{1}$. Hence, by \cite[Corollary~4.10]{ES19}, the metric $g_{a,1}$ is K\"{a}hler, for all $a > 0$.
			
			Let us now see that these metrics are also Einstein. By Theorem~\ref{thm:herrera}\,(1), the Ricci tensor decomposes as
			\begin{equation}\label{eq:ricci_cpn_2}
				\Ric = \frac{1}{\abs{\psi}^2} \hat{\Theta}_{1 2} \circ \hat{\eta}_{1 2}^{\psi} .
			\end{equation}
			
			By calculations similar to those in the proof of Theorem~\ref{cor:cpn_conclusion}, one finds that, for $1 \leq p,q \leq n$, $0 \leq \varepsilon \leq 3$ and $2 \leq l \leq 3$,
			\begin{align*}
				&\Omega\bigl(\xi^{a,1}_2,\xi^{a,1}_3 \bigr) = - \Uplambda \bigl( \bigl[ \xi^{a,1}_2 , \xi^{a,1}_3 \bigr] \bigr) = -\frac{1}{a} \Uplambda ( \xi_1 ) = -\frac{1}{a} \bigl( \varphi_{-2(n+1)} \bigr)_* ( \xi_1 ) = \frac{2(n+1)}{a} \hat{e}_1 \wedge \hat{e}_2 , \\
				&\Omega\bigl(e^{a,1}_{4p},e^{a,1}_{4q+1} \bigr) = \Omega\bigl(e^{a,1}_{4p+2},e^{a,1}_{4q+3} \bigr) = \delta_{p,q} \frac{2(n+1)}{a} \hat{e}_1 \wedge \hat{e}_2 , \\
				&\Omega\bigl(e^{a,1}_{4p},e^{a,1}_{4q+2} \bigr) = \Omega\bigl(e^{a,1}_{4p},e^{a,1}_{4q+3} \bigr) = \Omega\bigl(e^{a,1}_{4p+1},e^{a,1}_{4q+2} \bigr) = \Omega\bigl(e^{a,1}_{4p+1},e^{a,1}_{4q+3} \bigr) = \Omega\bigl(\xi^{a,1}_l,e^{a,1}_{4p + \varepsilon} \bigr) = 0 .
			\end{align*}
			Hence, using the definition of $\hat{\Theta}_{12}$ in terms of $\Omega$ and taking $k=0$ in the proof of Lemma~\ref{lemma:cpn_pure_2},
			\begin{align*}
				&\hat{\Theta}_{1 2} = \frac{2(n+1)}{a} \Bigg( \xi^{a,1}_{2} \wedge \xi^{a,1}_{3} + \sum_{p=1}^n \bigl( {\rm e}^{a,1}_{4p} \wedge {\rm e}^{a,1}_{4p+1} + {\rm e}^{a,1}_{4p+2} \wedge {\rm e}^{a,1}_{4p+3} \bigr) \Bigg) , \\
				&\hat{\eta}_{1 2}^{\psi} = - \Bigg( \xi^{a,1}_{2} \wedge \xi^{a,1}_{3} + \sum_{p=1}^n \bigl( {\rm e}^{a,1}_{4p} \wedge {\rm e}^{a,1}_{4p+1} + {\rm e}^{a,1}_{4p+2} \wedge {\rm e}^{a,1}_{4p+3} \bigr) \Bigg) .
			\end{align*}
			Finally, substituting everything into equation \eqref{eq:ricci_cpn_2}, we get
			$
			\Ric = \frac{2(n+1)}{a} \Id$,
			which completes the proof.
		\end{proof}
		
		\subsection{Quaternionic projective space}
		
		Consider the homogeneous realisation of quaternionic projective space given by
		\[
		\mathbb{HP}^{n} \cong \faktor{\Sp(n+1)}{ \Sp(1) \times \Sp(n) } ,
		\]
		where $H \coloneqq \Sp(1) \times \Sp(n)$ is realised as a subgroup of $\Sp(n+1)$ by the upper left-hand $1 \times 1$ block for $\Sp(1)$ and the lower right-hand $n \times n$ block for $\Sp(n)$. Denote by $\mathfrak{h}$ the Lie algebra of~$H$ and $\mathfrak{h}' \coloneqq \mathfrak{sp}(n) \subseteq \mathfrak{h}$. Then,
$
			\mathfrak{h} = \mathfrak{sp}(1) \oplus \mathfrak{h}' $ (as Lie algebras),
		and explicit bases are given by
		\begin{gather*}
			\mathfrak{sp}(1) = \vecspan_{\mathbb{R}}\bigl\{ \xi_1 \coloneqq {\rm i} F_{1,1}^{(n+1)}, \xi_2 \coloneqq -{\rm k} F_{1,1}^{(n+1)}, \xi_3 \coloneqq {\rm j} F_{1,1}^{(n+1)} \bigr\} ,\\
			\mathfrak{h}' = \mathfrak{sp}(n) \\
\phantom{\mathfrak{h}' }{}= \vecspan_{\mathbb{R}} \bigl\{ {\rm i} F_{p,p}^{(n+1)}, {\rm j} F_{p,p}^{(n+1)}, {\rm k} F_{p,p}^{(n+1)}, {\rm i} F_{r,s}^{(n+1)}, {\rm j} F_{r,s}^{(n+1)}, {\rm k} F_{r,s}^{(n+1)}, E_{r,s}^{(n+1)} \bigr\}_{\begin{subarray}{l} 2 \leq r < s \leq n+1 \\ p = 2 , \dots, n+1 \end{subarray}} .
		\end{gather*}
		The isotropy subalgebra $\mathfrak{h} \subseteq \mathfrak{sp}(n+1)$ has a reductive complement
		\begin{equation*}
			\mathfrak{m} \coloneqq \vecspan_{\mathbb{R}} \bigl\{e_{4p} \coloneqq {\rm j} F_{1,p+1}^{(n+1)}, e_{4p+1} \coloneqq {\rm k} F_{1,p+1}^{(n+1)}, e_{4p+2} \coloneqq {\rm i} F_{1,p+1}^{(n+1)}, e_{4p+3} \coloneqq E_{1,p+1}^{(n+1)}\bigr\}_{p = 1, \dots, n} ,
		\end{equation*}
		and the adjoint representation of $\mathfrak{h}$ on $\mathfrak{m}$ is irreducible. Therefore, by Theorem~\ref{thm:inv_metrics}, the invariant metrics come in a one-parameter family
		$
		g_a \coloneqq a B_0 \rvert_{\mathfrak{m} \times \mathfrak{m}}$, $ a>0$,
		and one easily verifies that the above basis of $\mathfrak{m}$ rescaled by $1/\sqrt{2a}$ is $g_a$-orthonormal. Without virtually any loss of generality, in order to simplify the notation we will only consider $g:= g_{1/2}$. We take the orientation defined by the ordering $(e_4,e_5,\dots,e_{4n+3})$.
		
		\subsubsection[Invariant spin\^{}r structures]{Invariant spin$^{\boldsymbol{r}}$ structures}\label{section:hpn_structures}
		
		As $\mathbb{HP}^1$ is just the sphere $S^4$, we will suppose throughout this section that $n > 1$. By Theorem~\ref{cor:inv_gen_spin}, in order to understand $\Sp(n+1)$-invariant spin$^r$ structures on $\mathbb{HP}^n$, we need to find all Lie group homomorphisms $ \varphi \colon H \to \SO(r)$ such that $\sigma \times \varphi$ lifts to $\Spin^r(4n)$. Since $H$ is simply-connected, any such homomorphism lifts. Note also that, for $r=2$, using simplicity of~$\Sp(1)$ and $\Sp(n)$, the only Lie group homomorphism $\Sp(1) \times \Sp(n) \to \SO(2)$ is the trivial one. The corresponding $\Sp(n+1)$-invariant spin$^{\mathbb{C}}$ structure on $\mathbb{HP}^n$ is naturally induced by its unique spin structure. The first interesting case is $r=3$, which corresponds to spin$^{\mathbb{H}}$ structures. In order to classify them, we need to describe all homomorphisms $\Sp(1) \to \SO(3)$:
		
		\begin{Proposition} \label{prop:reps2}
			Up to conjugation by elements of $\SO(3)$ there are exactly two Lie group homomorphisms $\Spin(3) \to \SO(3)$, namely the trivial homomorphism and the double covering~$\lambda_3$.
		\end{Proposition}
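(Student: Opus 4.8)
The plan is to exploit that $\Spin(3)\cong S^3$ is simply connected, so that Lie group homomorphisms $f\colon\Spin(3)\to\SO(3)$ are in bijection with Lie algebra homomorphisms $\mathfrak{spin}(3)\to\mathfrak{so}(3)$ via $f\mapsto df$. Under the canonical identification $\mathfrak{spin}(3)\cong\mathfrak{so}(3)$ (the one for which $d\lambda_3=\Id$), we are reduced to classifying Lie algebra endomorphisms of $\mathfrak{so}(3)$. Since $\mathfrak{so}(3)$ is simple, such an endomorphism is either $0$ --- in which case $f$ is the trivial homomorphism --- or injective, and hence, comparing dimensions, a Lie algebra automorphism of $\mathfrak{so}(3)$.

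The heart of the argument is then the standard fact that every automorphism of $\mathfrak{so}(3)\cong\mathfrak{su}(2)$ is inner: this Lie algebra is of type $A_1$, whose Dynkin diagram has no nontrivial symmetry, so $\mathrm{Aut}(\mathfrak{so}(3))$ is connected and equals its group of inner automorphisms; and since $\SO(3)$ has trivial centre, $\Ad\colon\SO(3)\to\mathrm{Aut}(\mathfrak{so}(3))$ identifies the inner automorphisms with $\{\Ad_g : g\in\SO(3)\}$. (If one prefers a hands-on argument: $\SO(3)$ has a unique $3$-dimensional irreducible real representation, so any automorphism $\theta$ of $\SO(3)$ satisfies $\theta(x)=AxA^{-1}$ for some $A$ in the normaliser of $\SO(3)$ in $\GL(3,\R)$, which modulo positive scalars is $\Ort(3)=\SO(3)\cdot\{\pm\Id\}$, and $\pm\Id$ acts trivially by conjugation.) Granting this, if $df$ is an automorphism then $df=\Ad_g$ for some $g\in\SO(3)$. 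The map $x\mapsto gxg^{-1}$ on $\SO(3)$ composed with $\lambda_3$ is then a homomorphism $\Spin(3)\to\SO(3)$ whose differential is also $\Ad_g$ (since $d\lambda_3=\Id$); because two homomorphisms out of the simply connected group $\Spin(3)$ with the same differential coincide, we conclude $f(u)=g\,\lambda_3(u)\,g^{-1}$ for all $u\in\Spin(3)$. Hence every nontrivial $f$ is conjugate to $\lambda_3$ by an element of $\SO(3)$.

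Finally I would observe that the trivial homomorphism and $\lambda_3$ are not $\SO(3)$-conjugate, as their images --- $\{\Id\}$ and all of $\SO(3)$ respectively --- are different; so there are exactly two conjugacy classes. The only non-formal input is the description of $\mathrm{Aut}(\mathfrak{so}(3))$ as its group of inner automorphisms, and that is the one point I would make sure to state or cite carefully; the rest is a routine use of the correspondence between a simply connected Lie group and its Lie algebra. One can alternatively run the proof through $\ker f$: the closed normal subgroups of $\Spin(3)$ are $\{1\}$, the centre $\{\pm1\}$, and $\Spin(3)$; these give respectively an impossible isomorphism $S^3\cong\RP^3$, a map factoring through an automorphism of $\SO(3)$ (hence $\SO(3)$-conjugate to $\lambda_3$ as above), and the trivial homomorphism.
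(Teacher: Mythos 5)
Your proof is correct, and it takes a genuinely different route from the paper's. The paper works at the group level: it shows that a non-trivial homomorphism $\varphi\colon\Sp(1)\to\SO(3)$ must be surjective (using simplicity of $\mathfrak{so}(3)$ and compactness of $\Sp(1)$), invokes the uniqueness of the irreducible $3$-dimensional real representation of $\Sp(1)$ to get conjugacy of any two such homomorphisms inside $\GL(3,\R)$, and then runs an explicit $TT^t$ computation (together with Schur's lemma) to show that the conjugating matrix can be rescaled to lie in $\SO(3)$. You instead pass immediately to the Lie algebra, note that a non-zero Lie algebra homomorphism $\mathfrak{so}(3)\to\mathfrak{so}(3)$ must be an automorphism (simplicity plus dimension count), and invoke the structure-theoretic fact that $\operatorname{Aut}(\mathfrak{so}(3))$ consists only of inner automorphisms (trivial outer automorphism group for type $A_1$), together with triviality of the centre of $\SO(3)$, to realise every automorphism as $\Ad_g$ with $g\in\SO(3)$; exponentiation then finishes the job. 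The paper's argument is more hands-on and self-contained (only needing the classification of low-dimensional real irreps of $\Sp(1)$), while yours is shorter and more conceptual, at the cost of appealing to the structure theory of the automorphism group of a compact simple Lie algebra. Your parenthetical ``hands-on'' alternative (normaliser of $\SO(3)$ inside $\GL(3,\R)$ modulo scalars is $\Ort(3)$) is essentially the same trick the paper uses to pass from $\GL(3,\R)$-conjugacy to $\SO(3)$-conjugacy, so the two approaches converge there. Both reach the same conclusion with no gaps.
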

		
		\begin{proof}
			Let $\varphi \colon \Sp(1) \to \SO(3)$ be a non-trivial homomorphism, and recall that $\Sp(1) \cong \Spin(3)$. As the Lie algebra $\mathfrak{so}(3)$ is simple, the only non-trivial normal subgroups of $\Sp(1)$ are discrete. Hence, $\varphi$ has discrete kernel. As $\Sp(1)$ is compact, the image of $\varphi$ is a closed subgroup of $\SO(3)$. By the first isomorphism theorem for Lie groups, the image of $\varphi$ is a $3$-dimensional Lie subgroup of $\SO(3)$, and hence it is open in $\SO(3)$. As $\SO(3)$ is connected, $\varphi$ must be surjective.
			
			In particular, the representation of $\Sp(1)$ on $\mathbb{R}^3$ induced by $\varphi$ must be irreducible, since otherwise the image of $\varphi$ would be contained inside a subgroup isomorphic to $\{1\}\times \SO(2)$. There is only one real irreducible $3$-dimensional representation of $\Sp(1)$ up to isomorphism~\cite[Proposition~11]{irreps} (namely, the standard spin double-cover $\varphi_0$), hence $\varphi$ is conjugate to $\varphi_0$ inside~$\GL(3,\mathbb{R})$. It follows that any two non-trivial homomorphisms $\varphi_1,\varphi_2\colon \Sp(1)\to \SO(3)$ are conjugate to each other inside $\GL(3,\mathbb{R})$, and it remains only to show that they are conjugate inside~$\SO(3)$.
			
			Fix $T \in \GL(3,\mathbb{R})$ such that, for all $A \in \Sp(1)$, we have $T^{-1} \varphi_1(A) T =\varphi_2(A)$. We claim that there exists $\hat{T} \in \SO(3)$ such that $\hat{T}^{-1} \varphi_1 \hat{T} = T^{-1} \varphi_1 T$. Indeed, let $B \coloneqq T^{-1} \varphi_1(A) T =\varphi_2(A)\in \SO(3)$. Then,
			\[
			T T^t = \varphi_1(A)^{-1} T B T^t = \varphi_1(A)^{-1} T B B^t T^t \bigl( \varphi_1(A)^{-1} \bigr)^t = \varphi_1(A)^{-1} T T^t \varphi_1(A) .
			\]
			As $\varphi_1$ is surjective, $TT^t$ commutes with all elements of $\SO(3)$, hence it is a scalar multiple of the identity. The result then follows by taking \smash{$\hat{T} = \det(T)^{-1/3} T \in \SO(3)$}.
		\end{proof}
		
		This allows us to classify invariant spin$^\mathbb{H}$ structures on quaternionic projective spaces.
		
		\begin{Theorem}
			For $n > 1$, the $\Sp(n+1)$-invariant spin$^{\mathbb{H}}$ structures on $\mathbb{HP}^{n}$ are given by
			\begin{equation*}
				\Sp(n+1) \times_{\phi_i} \Spin^{\mathbb{H}}(4n) , \qquad i = 0,1 ,
			\end{equation*}
			where $\sigma \colon H \to \SO(4n)$ is the isotropy representation, $\varphi_0$ is the trivial homomorphism $\Sp(1) \times \Sp(n) \to \SO(3)$, $\varphi_1 (x,y) = \lambda_3 (x)$ and $\phi_i$ is the unique lift of $\sigma \times \varphi_i$ to $\Spin^{\mathbb{H}}(4n)$.
		\end{Theorem}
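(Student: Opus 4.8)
The plan is to invoke the correspondence of Theorem~\ref{cor:inv_gen_spin} with $r=3$. Since $H=\Sp(1)\times\Sp(n)$ is simply connected, for any Lie group homomorphism $\varphi\colon H\to\SO(3)$ the product $\sigma\times\varphi\colon H\to\SO(4n)\times\SO(3)$ lifts (uniquely, as a homomorphism) through the double cover $\varphi^{3,4n}$ to $\Spin^{\mathbb{H}}(4n)$, so by Theorem~\ref{cor:inv_gen_spin} the $\Sp(n+1)$-invariant spin$^{\mathbb{H}}$ structures on $\mathbb{HP}^n$ are in bijection with homomorphisms $\varphi\colon\Sp(1)\times\Sp(n)\to\SO(3)$ modulo conjugation by $\SO(3)$. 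It therefore suffices to classify these homomorphisms up to $\SO(3)$-conjugation.

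First I would show that the $\Sp(n)$-factor acts trivially, using $n>1$. The restrictions of $\varphi$ to $\Sp(1)\times\{1\}$ and to $\{1\}\times\Sp(n)$ have commuting images in $\SO(3)$, so it is enough to analyse $\psi:=\varphi|_{\{1\}\times\Sp(n)}\colon\Sp(n)\to\SO(3)$. Its image is a compact connected subgroup of $\SO(3)$, hence of dimension $0$, $1$ or $3$. The case of dimension $1$ is impossible: the image would be a torus $\SO(2)$, and since $\Sp(n)$ is perfect any homomorphism onto an abelian group is trivial. The case of dimension $3$ is impossible: then $\ker\psi$ would be a normal subgroup of $\Sp(n)$ of positive dimension $\dim\Sp(n)-3>0$ (here $n>1$ is used), whereas the proper normal subgroups of the simple group $\Sp(n)$ are finite and central. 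Hence $\psi$ is trivial and $\varphi$ factors through the projection $H\to\Sp(1)$.

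It then remains to classify homomorphisms $\Sp(1)\cong\Spin(3)\to\SO(3)$ up to $\SO(3)$-conjugation, which is exactly the content of Proposition~\ref{prop:reps2}: the only such homomorphisms are the trivial one and the double cover $\lambda_3$. Pulling these back along $H\to\Sp(1)$ gives precisely $\varphi_0$ (trivial) and $\varphi_1(x,y)=\lambda_3(x)$, and these are not $\SO(3)$-conjugate since their images are $\{1\}$ and all of $\SO(3)$ respectively. Applying Theorem~\ref{cor:inv_gen_spin} once more produces exactly the two invariant spin$^{\mathbb{H}}$ structures $\Sp(n+1)\times_{\phi_i}\Spin^{\mathbb{H}}(4n)$, $i=0,1$, where $\phi_i$ is the (unique homomorphic) lift of $\sigma\times\varphi_i$.

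The main obstacle is the triviality of the $\Sp(n)$-factor: one has to be sure that no nontrivial low-dimensional representation of $\Sp(n)$ can intervene. This is precisely where the hypothesis $n>1$ is essential — for $n=1$ one has $\Sp(1)=\Spin(3)$, which does surject onto $\SO(3)$ — and the argument rests on the structure theory of the compact simple group $\Sp(n)$ (perfectness excludes a one-dimensional image, simplicity excludes a three-dimensional one). Everything else is a formal consequence of Theorem~\ref{cor:inv_gen_spin} and Proposition~\ref{prop:reps2}.
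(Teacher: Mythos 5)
Your proof is correct and follows essentially the same route as the paper: invoke Theorem~\ref{cor:inv_gen_spin}, note that $H$ is simply connected so every $\sigma\times\varphi$ lifts, reduce to classifying $\varphi\colon H\to\SO(3)$ up to $\SO(3)$-conjugation, and apply Proposition~\ref{prop:reps2}. The paper states the theorem without writing out these details; your contribution of spelling out why the $\Sp(n)$-factor must act trivially (perfectness rules out a one-dimensional image, simplicity plus $\dim\Sp(n)>3$ rules out a surjection) makes explicit a step the paper only gestures at.
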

		
		The invariant spin$^{\mathbb{H}}$ structure corresponding to $\varphi_0$ is simply the one induced by the unique spin structure, so for the rest of our discussion of $\mathbb{HP}^n$ we fix the spin$^{\mathbb{H}}$ structure corresponding to $\varphi_1$.
		
		\begin{Remark} \label{rem:hpn_aux}
			Observe that the auxiliary vector bundle of the spin$^{\mathbb{H}}$ structure corresponding to~$\varphi_1$ is $\Sp(n+1)$-equivariantly isomorphic to the rank-$3$ vector subbundle of $\End(T \mathbb{HP}^n)$ induced by the standard quaternionic K\"{a}hler structure on $\mathbb{HP}^n$.
		\end{Remark}
		
		\subsubsection[Invariant spin\^{}r spinors]{Invariant spin$^{\boldsymbol{r}}$ spinors}

		To begin, it is easy to see that this homogeneous realisation carries no invariant spinors: as the homogeneous realisation of $\mathbb{HP}^n$ that we are considering is that of a symmetric space, invariant spinors are parallel, and we know that $\mathbb{HP}^n$ cannot have any non-trivial parallel spinor, since it is not Ricci-flat. However, we shall see in the next proposition that there are always non-trivial invariant spin$^{\mathbb{H}}$ spinors, for sufficient twistings of the spinor bundle, when $n$ is odd.

		\begin{Proposition} \label{HPn_twisting_type} The $\Sp(n+1)$-invariant spinor type of $\mathbb{HP}^n$ $(n>1)$ is
			\[ \sigma(\mathbb{HP}^n,\Sp(n+1)) = \begin{cases}
				3, & n \ \text{odd}, \\
				>3 ,& n \ \text{even}.
			\end{cases}
			\]
			Furthermore, for $n$ odd, the number of twistings $m \geq 0$ of the spinor bundle which realises this is $m=n$.
		\end{Proposition}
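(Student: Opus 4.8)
The plan is to reduce the whole statement to the representation theory of $\mathfrak{h}=\mathfrak{sp}(1)\oplus\mathfrak{sp}(n)$.

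\emph{Step 1 (reduction to the structure $\phi_{1}$).} For $r=1$ there is only the unique invariant spin structure, and for $r=2$ the only homomorphism $\Sp(1)\times\Sp(n)\to\SO(2)$ is trivial, so the unique invariant spin$^{2}$ structure is the one induced by the spin structure; the same holds for the spin$^{\mathbb{H}}$ structure $\Sp(n+1)\times_{\phi_{0}}\Spin^{\mathbb{H}}(4n)$. In each of these cases $\mathfrak{h}$ acts trivially on the twisting factor $\Sigma_{r}^{\otimes m}$, so $(\Sigma_{4n,r}^{m})_{\mathrm{inv}}$ is a direct sum of copies of $(\Sigma_{4n})_{\mathrm{inv}}$, which vanishes because on this symmetric realisation invariant spinors are parallel (Proposition~\ref{symmetricspace_differential_equation}) while $\mathbb{HP}^{n}$ is not Ricci-flat. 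Hence $\sigma(\mathbb{HP}^{n},\Sp(n+1))\ge 3$, and — since by the preceding theorem $\phi_{0}$ and $\phi_{1}$ are the only invariant spin$^{\mathbb{H}}$ structures — everything now hinges on $\phi_{1}$.

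\emph{Step 2 (branching of the spinor module).} Because $\Spin(3)\cong\Sp(1)$ and $\varphi_{1}(x,y)=\lambda_{3}(x)$, the algebra $\mathfrak{h}$ acts on $\Sigma_{4n,3}^{m}=\Sigma_{4n}\otimes\Sigma_{3}^{\otimes m}$ by the spin lift of the isotropy representation on the $\Sigma_{4n}$-factor and by the standard representation $E\cong\C^{2}$ of $\Sp(1)$ (with $\mathfrak{sp}(n)$ acting trivially) on each of the $m$ copies of $\Sigma_{3}\cong E$. In the exterior-forms model $\Sigma_{4n}\cong\Lambda^{\bullet}\C^{2n}$, with $\mathfrak{sp}(n)$ acting by the natural representation on $\C^{2n}$ and no determinant twist since $\Sp(n)\subseteq\mathrm{SL}(2n,\C)$; consequently $(\Sigma_{4n})^{\mathfrak{sp}(n)}$ is spanned by the powers $\omega^{0},\dots,\omega^{n}$ of the invariant $2$-form $\omega=\sum_{p}y_{2p}\wedge y_{2p+1}$ (as in Theorem~\ref{thm:symp_cpn_spinors}), hence has dimension $n+1$. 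Computing the spin lifts $\widetilde{\sigma}_{*}(\xi_{1}),\widetilde{\sigma}_{*}(\xi_{2}),\widetilde{\sigma}_{*}(\xi_{3})$ from the Clifford relations — just as in the $\mathbb{CP}^{2n+1}$ case — one checks that $\mathfrak{sp}(1)$ acts on $\vecspan_{\C}\{\omega^{j}\}$ as the Lefschetz $\mathfrak{sl}(2)$-triple generated by $\omega\wedge(\,\cdot\,)$ and its adjoint, up to a scalar shift; this is the irreducible $\Sp(1)$-module of dimension $n+1$, so $(\Sigma_{4n})^{\mathfrak{sp}(n)}\cong\Sym^{n}(E)$ as $\mathfrak{sp}(1)$-representations.

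\emph{Step 3 (conclusion).} Combining the two factors gives $(\Sigma_{4n,3}^{m})_{\mathrm{inv}}\cong\bigl(\Sym^{n}(E)\otimes E^{\otimes m}\bigr)^{\Sp(1)}$, whose dimension, by self-duality of $\SU(2)$-representations, equals the multiplicity of $\Sym^{n}(E)$ inside $E^{\otimes m}$; by the Clebsch--Gordan rule this is positive exactly when $m\ge n$ and $m-n$ is even. Thus, for $n$ odd the smallest odd $m$ admitting a non-zero invariant $m$-twisted spin$^{\mathbb{H}}$ spinor is $m=n$, so $\sigma(\mathbb{HP}^{n},\Sp(n+1))=3$ with $n$ twistings; for $n$ even no odd $m$ works at $r=3$, so $\sigma(\mathbb{HP}^{n},\Sp(n+1))>3$. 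The only non-formal point is the identification in Step~2: the $\mathfrak{sp}(n)$-action on $\Sigma_{4n}\cong\Lambda^{\bullet}\C^{2n}$ is straightforward, but the generators $\xi_{2},\xi_{3}$ of $\mathfrak{sp}(1)$ do not preserve the maximal isotropic subspace used to build the spinor module, so seeing that their spin lifts act on the invariant powers $\omega^{j}$ as the Lefschetz raising and lowering operators (up to constants) requires the explicit Clifford commutation relations; the remainder is routine $\SU(2)$ representation theory.
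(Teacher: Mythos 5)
Your proposal is correct and follows essentially the same route as the paper: rule out $r\le 2$ and the trivial $\phi_0$ by the symmetric-space parallelism argument, identify $(\Sigma_{4n})^{\mathfrak{sp}(n)}\cong V_n$ as an $\mathfrak{sp}(1)^{\C}\cong\mathfrak{sl}(2,\C)$-module, and then apply Clebsch--Gordan to tensor powers of $\Sigma_3\cong V_1$. The only difference is cosmetic: where you identify the $\mathfrak{sp}(1)$-action on $\Span_{\C}\{\omega^j\}$ with the Lefschetz $\mathfrak{sl}(2)$-triple, the paper instead quotes the explicit formulas for $\widetilde{\ad}(\xi_i)\cdot\omega^k$ from an earlier reference and reads off the highest weight $n$ directly; both yield $\mathcal{S}\simeq V_n \cong \Sym^n(E)$ and the same Clebsch--Gordan count.
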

		\begin{proof}
			The preceding discussion shows that there are no invariant spinors. As noted above, the only invariant spin$^{\C}$ structure is the one coming from the spin structure, and it is clear that there are also no invariant spin$^{\C}$ spinors in this case (since $H$ acts trivially on $\Sigma_2$ and hence each $\Sigma_{4n,2}^{m}$ is equivalent as $H$-modules to a direct sum of copies of $\Sigma_{4n}$). This shows that~${\sigma(\mathbb{HP}^n, \Sp(n+1))\geq 3}$. Denote by $V_{t}:=V(t\omega_1)$ the irreducible representation of $\mathfrak{sp}(2,\C)\cong \mathfrak{sl}(2,\C)$ with highest weight $t\omega_1$ (and dimension $t+1$). Arguing as in \cite[Section~4.1.6]{AHL}, we consider the structure of \smash{$\mathcal{S}:=(\Sigma_{4n}\rvert_{\mathfrak{h}^{\C}})^{\mathfrak{sp}(2n,\C)} = \text{span}_{\C}\{ \omega^k\}_{k=0}^n$} (\smash{$\omega:=\sum_{j=1}^n y_{2j}\wedge y_{2j+1} $}) as a module for~${\mathfrak{sp}(2,\C)\subset \mathfrak{h}^{\C}}$ (here we adopt the usual convention $\mathfrak{sp}(k)^{\C} \cong \mathfrak{sp}(2k,\C)$). The action of~$\mathfrak{sp}(2,\C)$ on $\mathcal{S}$ follows from \cite[Lemma~5.13]{jordansasakian} and is given explicitly by
			\begin{align}
				&\widetilde{\ad}(\xi_1)\restr{\mathfrak{m}} \cdot \omega^k = {\rm i}(n-2k) \omega^k ,\label{CSA_action_quaternionic_case} \\
				&\widetilde{\ad}(\xi_2)\restr{\mathfrak{m}} \cdot \omega^k = k (n-k+1) \omega^{k-1} - \omega^{k+1} , \nonumber\\
				&\widetilde{\ad}(\xi_3)\restr{\mathfrak{m}} \cdot \omega^k = {\rm i} \bigl( \omega^{k+1} + k (n-k+1) \omega^{k-1} \bigr) .\label{xi3_action_quaternionic_case}
			\end{align}
			The standard basis element for the Cartan subalgebra of $\mathfrak{sp}(2,\C)\cong \mathfrak{sl}(2,\C)$ is $-{\rm i} \xi_1\sim \text{diag}[1,-1]$, and by \eqref{CSA_action_quaternionic_case} we see that the action of this element on $\mathcal{S}$ has highest eigenvalue $n$. In particular, $\mathcal{S}$ contains a copy of $V_n$, and by reason of dimension we have $\mathcal{S}\simeq V_n$ as $\mathfrak{sp}(2,\C)$-modules. Note that this representation is self-dual (see, e.g., \cite{Samelson}). Thus, it suffices to show that the smallest odd tensor power of $\Sigma_3$ which contains a copy of $V_n$ is $\Sigma_3^{\otimes n}$. Recalling the well-known decomposition
			\begin{align}\label{sl2_reps_tensor_decomp}
				V_s\otimes V_t \simeq V_{s+t}\oplus V_{s+t-2}\oplus \dots \oplus V_{s-t}, \qquad s\geq t
			\end{align}
			of $\mathfrak{sl}(2,\C)$-representations (see, e.g., \cite[Sections~11.1 and 11.2] {FultonHarris}), the result for the case where~$n$ is odd follows by repeatedly using \eqref{sl2_reps_tensor_decomp} to decompose tensor powers of $\Sigma_3 \simeq V_1$. For the case where $n$ is even, one sees from \eqref{sl2_reps_tensor_decomp} that the decompositions of odd tensor powers of $\Sigma_3\simeq V_1$ into irreducible representations contain only factors of the form $V_t$ with $t$ odd, and in particular cannot contain a copy of $V_n$.
		\end{proof}

		\begin{Remark}
			The difference in behaviour between the even and odd cases in the preceding proposition occurs as something of a technicality rather than a manifestation of any significant geometric difference; indeed, the argument presented in the odd case also produces representation-theoretic invariants in the even case if we allow even twistings of the spinor bundle. The reason to exclude even twistings is that the twisted spinor module would then fail to be well defined as a representation of $\Spin^r(n)$, since $[-1,-1]$ wouldn't act by the identity map.\ In order to obtain a notion of spinor bundles with even numbers of twistings, one needs to consider instead the alternative structure groups $\Spin(n) \times \Spin(r)$ described in \cite[Remark~2.3]{ES19}.
		\end{Remark}
		
		In the next proposition we describe explicitly the invariant $n$-twisted spin$^{\mathbb{H}}$ spinors which realise the equality $\sigma(\mathbb{HP}^n,\Sp(n+1))=3$ for $n$ odd. Recall that, as noted in Remark~\ref{rem:hpn_aux}, the auxiliary vector bundle $E$ of the spin$^\mathbb{H}$ structure corresponding to $\varphi_1$ can be seen as a~subbundle of the endomorphism bundle $\End(T \mathbb{HP}^n)$. In particular, $E$ inherits a natural connection $\nabla^E$ from the Levi-Civita connection $\nabla^{g}$ on $T \mathbb{HP}^n$, and the former induces a connection~${\nabla^{g,E}:=\nabla^g\otimes\bigl(\nabla^E\bigr)^{\otimes m}}$ on the spinor bundle $\Sigma_{4n,3}^m \mathbb{HP}^n$ for any odd $m \geq 1$. Recall that~${\{\xi_1,\xi_2,\xi_3\}}$ is a basis of $\mathfrak{sp}(1) \subset \mathfrak{sp}(1) \oplus \mathfrak{sp}(n) = \mathfrak{h}$, and denote by $ ( \Phi_1 \coloneqq \ad(\xi_1) \restr{\mathfrak{m}} , \allowbreak \Phi_2 \coloneqq \ad(\xi_2) \restr{\mathfrak{m}} , \Phi_3 \coloneqq \ad(\xi_3) \restr{\mathfrak{m}} )$ the standard basis of the invariant rank-$3$ subspace of $\End(\mathfrak{m})$ corresponding to $E$. The action of $\xi_i$ on this subspace is given by
		\begin{align*}
			\Phi_i&\mapsto 0, \qquad \Phi_j\mapsto 2\Phi_k, \qquad \Phi_k\mapsto -2\Phi_j ,
		\end{align*}
		where $(i,j,k)$ is an even permutation of $(1,2,3)$,
		and the spin lift of this representation acts on~${\Sigma_3\cong \C^2}$ by the standard basis matrices for $\mathfrak{su}(2)$:
		\begin{align*}
			\rho(\xi_1):= \begin{pmatrix}
				{\rm i} & 0\\ 0 & -{\rm i}
			\end{pmatrix} , \qquad \rho(\xi_2):= \begin{pmatrix}
				0 & 1\\ -1 & 0
			\end{pmatrix}, \qquad
			\rho(\xi_3):= \begin{pmatrix}
				0 & {\rm i} \\ {\rm i} & 0
			\end{pmatrix}
		\end{align*}
		(these matrices are taken relative to the standard basis $\hat{1}:=(1,0)$, $\hat{y}_1:= (0,1)$ for $\Sigma_3\cong \C^2 $). In order to relate these to the usual presentation of the Lie algebra $\mathfrak{sl}(2,\C)\cong \mathfrak{sp}(2,\C)$, we introduce~${H:=-{\rm i}\xi_1}$, $X:=\frac{1}{2}(\xi_2-{\rm i} \xi_3)$, $Y:= - \frac{1}{2}(\xi_2+{\rm i}\xi_3) $, so that
		\begin{align}\label{sp1repstandardmatrices}
			\rho(H)=\begin{pmatrix}
				1&0 \\ 0&-1
			\end{pmatrix},\qquad 	\rho(X)= \begin{pmatrix}
				0& 1\\ 0& 0
			\end{pmatrix} , \qquad \rho(Y)= \begin{pmatrix}
				0&0\\ 1&0
			\end{pmatrix}
		\end{align}
		act in the representation $\Sigma_3\cong \C^2$ by the usual operators. Using this setup, we obtain the following.

		\begin{Theorem}\label{HPn_twisted_inv_spinor_basis} If $n>1$ is odd, the space of $\Sp(n+1)$-invariant $n$-twisted spin$^{\mathbb{H}}$ spinors is spanned over $\C$ by \[\psi:= \sum_{j=0}^{n} (-1)^{j} \omega^j \otimes \bigl(\rho(Y)^{n-j}.\mathds{1}\bigr) , \]
			where $\mathds{1}:=\hat{1} \otimes \overset{n}{\cdots} \otimes \hat{1}$.
		\end{Theorem}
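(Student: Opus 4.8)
The plan is first to check directly that the stated $\psi$ lies in $\bigl(\Sigma_{4n,3}^{n}\bigr)_{\mathrm{inv}}$, and then to show that this space is one-dimensional, so that $\psi$, being visibly non-zero, must span it.

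For the invariance, recall that for $\varphi_1(x,y)=\lambda_3(x)$ the summand $\mathfrak{h}'=\mathfrak{sp}(n)$ of $\mathfrak{h}$ acts trivially on $\Sigma_3$, and that the space of $\mathfrak{sp}(2n,\mathbb{C})$-invariants in $\Sigma_{4n}$ is precisely $\mathcal{S}:=\text{span}_{\mathbb{C}}\{\omega^{k}\}_{k=0}^{n}$ (as established in the proof of Proposition~\ref{HPn_twisting_type}); hence $\psi$ is automatically $\mathfrak{h}'$-invariant and only invariance under $\mathfrak{sp}(1)$ remains. Working with the generators $H=-\mathrm{i}\xi_1$, $X=\tfrac{1}{2}(\xi_2-\mathrm{i}\xi_3)$, $Y=-\tfrac{1}{2}(\xi_2+\mathrm{i}\xi_3)$ of $\mathfrak{sp}(1)^{\mathbb{C}}\cong\mathfrak{sl}(2,\mathbb{C})$, formulas \eqref{CSA_action_quaternionic_case}--\eqref{xi3_action_quaternionic_case} yield the standard $\mathfrak{sl}(2,\mathbb{C})$-action $H.\omega^{k}=(n-2k)\omega^{k}$, $X.\omega^{k}=k(n-k+1)\omega^{k-1}$, $Y.\omega^{k}=\omega^{k+1}$ on $\mathcal{S}\cong V_n$, while \eqref{sp1repstandardmatrices} shows that $\mathds{1}=\hat{1}^{\otimes n}$ is the highest weight vector of $\Sigma_3^{\otimes n}\cong V_1^{\otimes n}$, with $X.\mathds{1}=0$ and $H.\mathds{1}=n\,\mathds{1}$. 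A weight count gives $\omega^{j}$ weight $n-2j$ and $\rho(Y)^{n-j}.\mathds{1}$ weight $2j-n$, so $\psi$ has weight $0$ and $H.\psi=0$; since a weight-zero vector annihilated by $X$ generates a trivial $\mathfrak{sl}(2,\mathbb{C})$-submodule, it is enough to show $X.\psi=0$. For this I would invoke the standard identity $[X,Y^{m}]=m\,Y^{m-1}(H-m+1)$, which gives $X.\bigl(\rho(Y)^{n-j}.\mathds{1}\bigr)=(n-j)(j+1)\,\rho(Y)^{n-j-1}.\mathds{1}$; then, expanding $X.\psi$ by the Leibniz rule, using $X.\omega^{j}=j(n-j+1)\omega^{j-1}$, and shifting the summation index by one in the first resulting sum, the two sums cancel since $(-1)^{j+1}+(-1)^{j}=0$. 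Hence $X.\psi=0$, so $\psi$ is $\mathfrak{h}^{\mathbb{C}}$-invariant, and as the isotropy group is connected $\psi\in\bigl(\Sigma_{4n,3}^{n}\bigr)_{\mathrm{inv}}$.

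For the dimension, note that any $\mathfrak{h}$-invariant element of $\Sigma_{4n}\otimes\Sigma_3^{\otimes n}$ is in particular $\mathfrak{sp}(2n,\mathbb{C})$-invariant, hence lies in $\mathcal{S}\otimes\Sigma_3^{\otimes n}$ (since $\mathfrak{sp}(2n,\mathbb{C})$ acts trivially on the $\Sigma_3$ factors). As a module over the remaining $\mathfrak{sp}(2,\mathbb{C})$, this is $V_n\otimes V_1^{\otimes n}$, whose trivial isotypic component has dimension equal to the multiplicity of $V_n$ in $V_1^{\otimes n}$, by self-duality of $V_n$; iterating \eqref{sl2_reps_tensor_decomp}, or simply observing that the weight-$n$ subspace of $V_1^{\otimes n}$ is one-dimensional and consists of highest weight vectors, this multiplicity is $1$. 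Since the $j=0$ term of $\psi$ equals $1\otimes\rho(Y)^{n}.\mathds{1}=n!\,\bigl(1\otimes\hat{y}_1^{\otimes n}\bigr)\neq 0$, the spinor $\psi$ is a non-zero element of this one-dimensional space, and therefore spans it.

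The only genuinely computational point is the cancellation establishing $X.\psi=0$, and even that amounts to the commutator identity above together with a single change of summation index; everything else is bookkeeping with $\mathfrak{sl}(2,\mathbb{C})$-weights and a standard multiplicity count. The one place to be slightly careful is the reduction to $\mathcal{S}\otimes\Sigma_3^{\otimes n}$, which relies on $\mathcal{S}$ being exactly, and not merely contained in, the space of $\mathfrak{sp}(2n,\mathbb{C})$-invariants of $\Sigma_{4n}$ --- precisely the content recorded in the proof of Proposition~\ref{HPn_twisting_type}.
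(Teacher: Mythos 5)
Your proof is correct, but it takes a genuinely different route from the paper's. The paper constructs $\psi$ conceptually: it observes that the obvious $\mathfrak{sp}(2,\C)$-module isomorphism $\mathcal{T}\colon \mathcal{S}\to U(\mathfrak{sp}(2,\C)).\mathds{1}$, $\omega^k \mapsto \rho(Y)^k.\mathds{1}$, lives in $\operatorname{Hom}_{\mathfrak{sp}(2,\C)}(\mathcal{S},\Sigma_3^{\otimes n})\simeq \mathcal{S}^*\otimes \Sigma_3^{\otimes n}$, and then transports it to an invariant element of $\mathcal{S}\otimes \Sigma_3^{\otimes n}$ using the musical isomorphism $\sharp\colon \mathcal{S}^*\to\mathcal{S}$ induced by the $\mathfrak{sp}(2,\C)$-invariant symplectic form $\Omega(\omega^j,\omega^k)=(-1)^j\delta_{j+k,n}$ on $\mathcal{S}\simeq V_n$; the sign $(-1)^j$ is then not a computed cancellation but a direct avatar of this symplectic pairing. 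Your argument, by contrast, posits the formula and verifies invariance by hand: the weight count gives $H.\psi=0$, and the commutator identity $[X,Y^m]=mY^{m-1}(H-m+1)$ together with a shift of summation index produces the telescoping cancellation establishing $X.\psi=0$. Both routes ultimately rest on the same reduction $(\Sigma_{4n}\otimes\Sigma_3^{\otimes n})^{\mathfrak{h}^{\C}}\simeq(V_n\otimes V_1^{\otimes n})^{\mathfrak{sp}(2,\C)}$ and the multiplicity-one fact $\dim\operatorname{Hom}_{\mathfrak{sp}(2,\C)}(V_n,V_1^{\otimes n})=1$, so neither is strictly stronger; the paper's version explains the provenance of the sign and is shorter once the duality machinery is in place, whereas yours is more elementary and self-contained, needing only standard $\mathfrak{sl}(2,\C)$ bookkeeping. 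One small remark: since the weight-zero vector $\psi$ lies in the unique trivial $\mathfrak{sp}(2,\C)$-isotypic component (which you have shown is one-dimensional), the verification $X.\psi=0$ is already enough to conclude $Y.\psi=0$, as you correctly note implicitly; you could also observe that the dimension-one count by itself already forces any weight-zero vector killed by $X$ to be invariant, which streamlines the logic slightly.
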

		\begin{proof}
			As in the proof of Proposition~\ref{HPn_twisting_type}, we note that \smash{$\mathcal{S}:=(\Sigma_{4n}\rvert_{\mathfrak{h}^{\C}})^{\mathfrak{sp}(2n,\C)}\simeq V_n$} as modules for $ \mathfrak{sl}(2,\C)\cong \mathfrak{sp}(2,\C)\subset \mathfrak{h}^{\C}$; explicitly we have \smash{$\mathcal{S} = \Span_{\C} \bigl\{ \omega^{\ell}\bigr\}_{\ell=0}^n$}, with the action of $\mathfrak{sp}(2,\C)=\Span_{\C}\{\xi_1,\xi_2,\xi_3\}$ by the formulas \eqref{CSA_action_quaternionic_case}--\eqref{xi3_action_quaternionic_case}. It is clear from \eqref{sp1repstandardmatrices} that $\mathds{1} \in \Sigma_3^{\otimes n}$ is a~highest weight vector for $\mathfrak{sp}(2,\C)$, and that the $\mathfrak{sp}(2,\C)$-submodule $ U(\mathfrak{sp}(2,\C)).\mathds{1}$ that it generates\footnote{Here, $U(\mathfrak{sp}(2,\C))$ refers to the universal enveloping algebra of $\mathfrak{sp}(2,\C)$ and the $.$ product refers to the action via the representation.} is isomorphic to $V_n$ (since $\rho(H) = \text{diag}[1,-1]$ acts on $\mathds{1}$ by multiplication by $n$). Therefore, we have \smash{$U(\mathfrak{sp}(2,\C)).\mathds{1}= \Span_{\C} \bigl\{\rho(Y)^k.\mathds{1}\bigr\}_{k=0}^n$}. On the other hand, we see from \eqref{CSA_action_quaternionic_case}--\eqref{xi3_action_quaternionic_case} that~${1=\omega^0\in \mathcal{S}}$ is a highest weight vector and $Y.\omega^k= \omega^{k+1}$ for all $k=0,\dots,n$. In particular, the isomorphism $\mathcal{T}\colon\mathcal{S} \to U(\mathfrak{sp}(2,\C)).\mathds{1} $ is given (up to rescaling) by $ \mathcal{T}\bigl(\omega^k\bigr) = \rho(Y)^k.\mathds{1} $. We have \smash{$\mathcal{T}\in \Hom_{\mathfrak{sp}(2,\C)}\bigl(\mathcal{S}, \Sigma_3^{\otimes n}\bigr) \simeq \mathcal{S}^*\otimes \Sigma_3^{\otimes n}$}, and the invariant spin$^{\mathbb{H}}$ spinor we are seeking is the corresponding element of $\mathcal{S}\otimes \Sigma_3^{\otimes n}$ obtained via the musical isomorphism $\sharp\colon \mathcal{S}^*\simeq \mathcal{S}$ associated to the $\mathfrak{sp}(2,\C)$-invariant symplectic form
			\[ \Omega\bigl(\omega^j,\omega^k\bigr) = \begin{cases}
				(-1)^j, & j+k=n , \\ 0, & j+k\neq n
			\end{cases} \]
			on $\mathcal{S}$. Defining \smash{$\widehat{\omega^j}\in \mathcal{S}^*$} by \smash{$\widehat{\omega^j}\bigl(\omega^k\bigr)=\delta_{j,k}$}, one sees that \smash{$\bigl(\widehat{\omega^j}\bigr)^{\sharp}= 	(-1)^{j+1}\omega^{n-j} $}, and the result~then follows by noting that \smash{$\mathcal{T} = \sum_{j=0}^{n} \widehat{\omega^j} \otimes \bigl(\rho(Y)^j.\mathds{1}\bigr)$}.
		\end{proof}
		
\begin{Remark}
The spinor in the statement of Theorem~\ref{HPn_twisted_inv_spinor_basis} corresponds to the one in \cite[Section~3.4.1]{ES19}, which the authors show to be pure.
\end{Remark}
		
		Finally, we give the differential equation satisfied by $\psi$. Recall that $\mathbb{HP}^n=\Sp(n+1)/\Sp(1)\times \Sp(n)$ is a symmetric space, and that the auxiliary bundle of the spin$^{\mathbb{H}}$ structure under consideration is isomorphic to the rank-$3$ bundle spanned by the (locally-defined) endomorphisms~$\Phi_1$, $\Phi_2$, $\Phi_3$. This bundle inherits a connection $\nabla^E$ from the Levi-Civita connection, and its Nomizu map vanishes identically when restricted to $\mathfrak{m}$. The following is an immediate consequence of Proposition~\ref{symmetricspace_differential_equation}.

		\begin{Corollary}
			For $n > 1$ odd, the invariant $n$-twisted spin$^{\mathbb{H}}$ spinor $\psi$ in Theorem~{\rm\ref{HPn_twisted_inv_spinor_basis}} is parallel with respect to the invariant connection $\nabla^{g,E}:=\nabla^g \otimes\bigl(\nabla^E\bigr)^{\otimes n}$.
		\end{Corollary}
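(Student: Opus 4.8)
The plan is to recognise this statement as a direct application of Proposition~\ref{symmetricspace_differential_equation}, and to check that all of its hypotheses are met in the present setting. First I would recall that the homogeneous realisation $\mathbb{HP}^n = \Sp(n+1)/(\Sp(1)\times\Sp(n))$ under consideration is that of a Riemannian symmetric space, so the reductive decomposition $\mathfrak{g} = \mathfrak{h}\oplus\mathfrak{m}$ fixed above satisfies $[\mathfrak{m},\mathfrak{m}]\subseteq\mathfrak{h}$. Consequently the Nomizu map $\Uplambda^g$ of the Levi-Civita connection $\nabla^g$ of the metric $g=g_{1/2}$ vanishes identically on $\mathfrak{m}$.

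Next I would identify the auxiliary bundle $E$ and its connection $\nabla^E$ with the data appearing in Proposition~\ref{symmetricspace_differential_equation}. By Remark~\ref{rem:hpn_aux}, $E$ is $\Sp(n+1)$-equivariantly isomorphic to the rank-$3$ subbundle of $\End(T\mathbb{HP}^n)$ spanned by the endomorphisms $\Phi_1,\Phi_2,\Phi_3$, and $\nabla^E$ is the connection it inherits from $\nabla^g$. Since the Nomizu map of a connection induced on an associated tensor (sub)bundle is obtained by composing $\Uplambda^g$ with the relevant linear-algebraic operations (the differential of the representation and restriction to the invariant subspace), the vanishing of $\Uplambda^g$ on $\mathfrak{m}$ forces the Nomizu map of $\nabla^E$ to vanish on $\mathfrak{m}$ as well. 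Hence $\nabla^E$ is precisely the unique $G$-invariant connection on $\hat P$ with Nomizu map identically zero on $\mathfrak{m}$, as the proposition requires.

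Finally, by Theorem~\ref{HPn_twisted_inv_spinor_basis} the spinor $\psi$ is an $\Sp(n+1)$-invariant element of $\Sigma_{4n,3}^{n}$, so Proposition~\ref{symmetricspace_differential_equation} applies verbatim with $r=3$ and $m=n$, yielding $\nabla^{g,E}\psi = 0$. The only point needing genuine (but entirely routine) verification is the claim in the second paragraph that passing from $\nabla^g$ to the induced connection on the endomorphism subbundle preserves the vanishing of the Nomizu map on $\mathfrak{m}$; this follows from $\R$-linearity of the Nomizu map in $X\in\mathfrak{g}$ together with functoriality with respect to the tensor and subbundle constructions, so no real obstacle arises.
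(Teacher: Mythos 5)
Your proposal is correct and takes essentially the same route as the paper: the Corollary is stated there as an immediate consequence of Proposition~\ref{symmetricspace_differential_equation}, with the supporting observations (that $\mathbb{HP}^n$ is symmetric, that $E$ is the rank-$3$ subbundle of $\End(T\mathbb{HP}^n)$ spanned by $\Phi_1,\Phi_2,\Phi_3$, and that the induced connection $\nabla^E$ has Nomizu map vanishing on $\mathfrak{m}$) given in the surrounding text. You additionally spell out why the vanishing of $\Uplambda^g$ on $\mathfrak{m}$ propagates to the induced connection on the endomorphism subbundle, which the paper simply asserts; this is a harmless amplification.
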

		This spin$^{\mathbb{H}}$ spinor encodes, via \cite[Corollary~4.12]{ES19}, a well-known geometric fact -- see, e.g.,~\cite{besse}.

		\begin{Theorem}
			The metric $g_a$ on $\mathbb{HP}^n$ is quaternionic K\"{a}hler.
		\end{Theorem}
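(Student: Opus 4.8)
The plan is to obtain the result as a direct consequence of the material just developed, via Theorem~\ref{thm:herrera}(5). First I would reduce to a single metric in the family: since $g_a = 2a\, g$ with $g := g_{1/2}$, and the quaternionic K\"ahler condition (for $4n \geq 8$, i.e.\ $n>1$, the condition that the holonomy be contained in $\Sp(n)\cdot\Sp(1)$) depends only on the Levi-Civita connection, which is unchanged under constant rescaling of the metric, it suffices to prove the claim for $g$. Then, for $n>1$ odd, I would feed the invariant $n$-twisted spin$^{\mathbb{H}}$ spinor $\psi$ of Theorem~\ref{HPn_twisted_inv_spinor_basis} into Theorem~\ref{thm:herrera}(5): by the preceding Corollary (an instance of Proposition~\ref{symmetricspace_differential_equation}, using that $\mathbb{HP}^n = \Sp(n+1)/\Sp(1)\times\Sp(n)$ is symmetric and that the Nomizu map of $\nabla^E$ vanishes on $\mathfrak{m}$) the spinor $\psi$ is $\nabla^{g,E}$-parallel, and by the preceding Remark it is pure (this is the content of \cite[Section~3.4.1]{ES19}). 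Since here $r=3$, Theorem~\ref{thm:herrera}(5) -- equivalently \cite[Corollary~4.12]{ES19} -- then gives that $(\mathbb{HP}^n,g)$ is quaternionic K\"ahler, and hence so is $(\mathbb{HP}^n,g_a)$ for every $a>0$.

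For completeness I would indicate why the purity hypothesis is met without reproving it: purity of $\psi$ asks that $(\hat{\eta}_{kl}^{\psi})^2 = -\Id_{\R^{4n}}$ together with $(\eta_{kl}^{\psi} + 2\hat{e}_k\cdot\hat{e}_l)\cdot\psi = 0$ for all $1 \leq k < l \leq 3$, and these identities can be checked at $eH$ from the explicit Clifford action \eqref{eq:def_action} on $\Sigma_{4n}$, the $\mathfrak{sp}(2,\mathbb{C})$-action \eqref{CSA_action_quaternionic_case}--\eqref{xi3_action_quaternionic_case} on the powers $\omega^j$, and the matrices \eqref{sp1repstandardmatrices} describing the action on $\Sigma_3^{\otimes n}$; since this is precisely the computation carried out in \cite{ES19}, I would simply cite it. With parallelism and purity in hand, part (5) of Theorem~\ref{thm:herrera} applies with no further conditions (the dimensional restrictions appearing in part (3) play no role here).

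The main point I want to stress is that essentially all of the work has already been done: the construction of $\psi$ (Theorem~\ref{HPn_twisted_inv_spinor_basis}), its parallelism (the preceding Corollary), and its purity (\cite{ES19}) are the substantive inputs, and the present theorem is merely their packaging through Theorem~\ref{thm:herrera}(5); there is no genuine obstacle in the odd case. The one place where this argument does not reach is the even case: for $n$ even there is no non-zero $\Sp(n+1)$-invariant spin$^{\mathbb{H}}$ spinor (Proposition~\ref{HPn_twisting_type}, since odd tensor powers of $V_1 \simeq \Sigma_3$ never contain a copy of $V_n$), so the spinorial route is unavailable; there one falls back on the classical fact that $\mathbb{HP}^n$ is one of Wolf's symmetric quaternionic K\"ahler spaces, for which I would refer to \cite{besse}.
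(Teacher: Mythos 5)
Your proposal matches the paper's own (very terse) proof: the paper's entire argument is the one-line remark preceding the theorem, namely that the parallel pure spin$^{\mathbb{H}}$ spinor of Theorem~\ref{HPn_twisted_inv_spinor_basis} encodes quaternionic K\"{a}hlerity via \cite[Corollary~4.12]{ES19} (= Theorem~\ref{thm:herrera}(5)), with purity cited from \cite[Section~3.4.1]{ES19} and parallelism from the preceding Corollary. You are, if anything, more careful than the paper in two respects: you make the scale-invariance reduction to $g=g_{1/2}$ explicit, and you observe that the spinorial route is only available for $n$ odd (for $n$ even there is no non-zero invariant spin$^{\mathbb{H}}$ spinor, by Proposition~\ref{HPn_twisting_type}), whereas the paper states the theorem without flagging that its spinorial argument covers only half the cases and one must fall back on the classical Wolf-space fact for the other half.
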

		
		\subsection{Octonionic projective plane}\label{section:op2}
		
		Consider now the octonionic projective plane, realised as a homogeneous space via
		\[
		\mathbb{OP}^{2} \cong \faktor{F_4}{\Spin(9)} .
		\]
		A description of the isometric action of $F_4$ can be found, e.g., in \cite{Baez_octonions}, and, importantly, the isotropy representation is just the real spin representation of $\Spin(9)$ on $\mathbb{R}^{16}$: 
		\[
		\Spin(9) \xhookrightarrow{} \Cl^{0}_{9,0} \cong \Cl_{8,0} \cong \Mat_{16}( \mathbb{R} ) .
		\]
		As in \cite[Theorems~1.4.3 and 1.4.4, Proposition~1.4.5]{wernli}, at the level of Lie algebras this inclusion is given by
		\begin{align}
				\mathfrak{spin}(9) \xhookrightarrow{} \Cl^{0}_{9,0} &\overset{\psi_1}{\cong} \Cl_{8,0} \overset{\psi_2}{\cong} \Cl_{0,6} \otimes \Cl_{2,0} \overset{\psi_3}{\cong} \Cl_{4,0} \otimes \Cl_{0,2} \otimes \Cl_{2,0} \nonumber\\
				&\overset{\psi_4}{\cong} \Cl_{0,2} \otimes \Cl_{2,0} \otimes \Cl_{0,2} \otimes \Cl_{2,0} \overset{\psi_5}{\cong} \Mat_2 ( \mathbb{R} ) \otimes \mathbb{H} \otimes \Mat_2 ( \mathbb{R} ) \otimes \mathbb{H}\nonumber \\
				&\overset{\psi_6}{\cong} \Mat_2 ( \mathbb{R} ) \otimes \Mat_2 ( \mathbb{R} ) \otimes \mathbb{H} \otimes \mathbb{H} \overset{\psi_7}{\cong} \Mat_{4} ( \mathbb{R} ) \otimes \Mat_{4} ( \mathbb{R} ) \overset{\psi_8}{\cong} \Mat_{16} ( \mathbb{R} ) . \label{eq:chain}
		\end{align}
		The algebra isomorphisms $\psi_{1}$, $\psi_{2}$, $\psi_{3}$, $\psi_{4}$, $\psi_{5}$ are given explicitly in \cite{wernli}; $\psi_{6}$ is the obvious permutation of the second and third factors; $\psi_{7}$ is the tensor product of the Kronecker product of the first two factors and the isomorphism
		\begin{align*}
				\mathbb{H} \otimes_{\mathbb{R}} \mathbb{H} &\to \Mat_{4}( \mathbb{R} ),\qquad
				q_1 \otimes q_2 \mapsto ( x \mapsto q_1 \cdot x \cdot \overline{q_2} ),
		\end{align*}
		where $x = ( x_1 , x_2, x_3, x_4 ) \in \mathbb{R}^4$ is thought of as the quaternion $x_1 + {\rm i} x_2 + {\rm j} x_3 + {\rm k} x_4$; and $\psi_{8}$ is the Kronecker product. Letting $ \{e_0, e_1 , \dots , e_8 \}$ be the canonical basis of $\mathbb{R}^9$, a basis of $\mathfrak{spin}(9)$ is given by
		\[
		\mathfrak{spin}(9)= \Span_{\R} \{ e_i \cdot e_j \}_{0 \leq i < j \leq 8} .
		\]
		With a slight abuse of notation, we will denote by $e_1, \dots, e_n$ the elements of the canonical basis of $\mathbb{R}^n$, for $n=2,4,6,8, 16$. Now we give the images in $\Mat_{16}( \mathbb{R} )$ of each of the elements of our basis of $\mathfrak{spin}(9)$. For the first basis vector $e_0\cdot e_1$, one computes, following the chain of maps in~\eqref{eq:chain}
		\begin{align}
			e_0 \cdot e_1 &\mapsto e_0 \cdot e_1 \mapsto e_1 \mapsto e_1 \otimes ( e_1 \cdot e_2 ) \mapsto e_1 \otimes ( e_1 \cdot e_2 ) \otimes ( e_1 \cdot e_2 ) \nonumber\\
			& \mapsto e_1 \otimes ( e_1 \cdot e_2 ) \otimes ( e_1 \cdot e_2 ) \otimes ( e_1 \cdot e_2 ) \mapsto \begin{pmatrix} 1 & 0 \\ 0 & -1 \end{pmatrix} \otimes k \otimes \begin{pmatrix} 0 & 1 \\ -1 & 0 \end{pmatrix} \otimes k \nonumber\\
			& \mapsto \begin{pmatrix} 1 & 0 \\ 0 & -1 \end{pmatrix} \otimes \begin{pmatrix} 0 & 1 \\ -1 & 0 \end{pmatrix} \otimes k \otimes k \mapsto \begin{pmatrix} 0 & 1 & 0 & 0 \\ -1& 0 & 0 & 0 \\ 0 & 0 & 0 & -1 \\ 0 & 0 & 1 & 0 \end{pmatrix} \otimes \begin{pmatrix} 1 & 0 & 0 & 0 \\ 0 & -1 & 0 & 0 \\ 0 & 0 & -1 & 0 \\ 0 & 0 & 0 & 1 \end{pmatrix} \nonumber\\
			&\mapsto -E^{(16)}_{1,5} + E^{(16)}_{2,6} + E^{(16)}_{3,7} - E^{(16)}_{4,8} + E^{(16)}_{9,13} - E^{(16)}_{10,14} - E^{(16)}_{11,15} + E^{(16)}_{12,16} .
			\label{eqn_number_spin9_parti}\end{align}
		The others are computed similarly, giving
		\begin{gather}
				e_0 \cdot e_2 \mapsto -E^{(16)}_{1,13} + E^{(16)}_{2,14} + E^{(16)}_{3,15} - E^{(16)}_{4,16} + E^{(16)}_{5,9} - E^{(16)}_{6,10} - E^{(16)}_{7,11} + E^{(16)}_{8,12} ,\nonumber
				\\
				e_0 \cdot e_3
				\mapsto -E^{(16)}_{1,7} - E^{(16)}_{2,8} - E^{(16)}_{3,5} - E^{(16)}_{4,6} - E^{(16)}_{9,15} - E^{(16)}_{10,16} - E^{(16)}_{11,13} - E^{(16)}_{12,14} ,\nonumber
				\\
				e_0 \cdot e_4
				\mapsto E^{(16)}_{1,6} + E^{(16)}_{2,5} - E^{(16)}_{3,8} - E^{(16)}_{4,7} + E^{(16)}_{9,14} + E^{(16)}_{10,13} - E^{(16)}_{11,16} - E^{(16)}_{12,15} ,\nonumber
				\\
				e_0 \cdot e_5
				\mapsto -E^{(16)}_{1,4} + E^{(16)}_{2,3} + E^{(16)}_{5,8} - E^{(16)}_{6,7} - E^{(16)}_{9,12} + E^{(16)}_{10,11} + E^{(16)}_{13,16} - E^{(16)}_{14,15} ,\nonumber
				\\
				e_0 \cdot e_6
				\mapsto -E^{(16)}_{1,8} + E^{(16)}_{2,7} - E^{(16)}_{3,6} + E^{(16)}_{4,5} - E^{(16)}_{9,16} + E^{(16)}_{10,15} - E^{(16)}_{11,14} + E^{(16)}_{12,13} ,\nonumber
				\\
				e_0 \cdot e_7
				\mapsto -E^{(16)}_{1,2} + E^{(16)}_{3,4} - E^{(16)}_{5,6} + E^{(16)}_{7,8} - E^{(16)}_{9,10} + E^{(16)}_{11,12} - E^{(16)}_{13,14} + E^{(16)}_{15,16} ,\nonumber
				\\
				e_0 \cdot e_8
				\mapsto -E^{(16)}_{1,3} - E^{(16)}_{2,4} - E^{(16)}_{5,7} - E^{(16)}_{6,8} - E^{(16)}_{9,11} - E^{(16)}_{10,12} - E^{(16)}_{13,15} - E^{(16)}_{14,16} .
				\label{eqn_number_spin9_partii}
		\end{gather}
		The images of the other basis vectors $e_i\cdot e_j$ ($1\leq i\leq 8$) for $\mathfrak{spin}(9)$ are then determined by taking products of the above generators inside $\Cl^{0}_{9,0}$ using the Clifford algebra identities $e_i\cdot e_j = (e_0\cdot e_i)\cdot (e_0\cdot e_j)$.
		
		\subsubsection[Invariant spin\^{}r structures]{Invariant spin$^{\boldsymbol{r}}$ structures}
		
		By Theorem~\ref{cor:inv_gen_spin}, in order to understand $F_4$-invariant spin$^r$ structures on $\mathbb{OP}^2$, we need to find all Lie group homomorphisms $ \varphi \colon \Spin(9) \to \SO(r)$ such that $\sigma \times \varphi$ lifts to $\Spin^r(16)$ (of course, as the group $\Spin(9)$ is simply connected, the lifting condition is automatically satisfied). As the Lie algebra $\mathfrak{spin}(9) \cong \mathfrak{so}(9)$ is simple, for $1 \leq r \leq 8$ the only homomorphism $\Spin(9)\to \SO(r)$ is the trivial one. The corresponding $F_4$-invariant spin$^r$ structures are just the ones in the lineage of the invariant spin structure.
		
		The first non-trivial case is $r=9$, where we have the covering homomorphism $\lambda_9 \colon \Spin(9) \to \SO(9)$. By essentially the same argument as in Proposition~\ref{prop:reps2}, there are only two Lie group homomorphisms $\Spin(9) \to \SO(9)$ up to conjugation by elements of $\SO(9)$, namely the trivial one $\varphi_0$ and the double covering $\varphi_1 \coloneqq \lambda_9$, leading to two possible invariant spin$^9$ structures up to equivalence.
		
		\begin{Theorem}\label{F4_inv_gen_spin}
			The $F_4$-invariant spin$^{9}$ structures on $\mathbb{OP}^{2}$ are given by
		$
				F_4 \times_{\phi_i} \Spin^{9}(16)$, ${i = 0,1}$,
			where $\phi_i$ is the unique lift of $\sigma \times \varphi_i$ to $\Spin^{9}(16)$ and $\sigma \colon \Spin(9) \to \SO(16)$ is the isotropy representation.
		\end{Theorem}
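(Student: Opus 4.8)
The plan is to reduce everything to the algebraic classification in Theorem~\ref{cor:inv_gen_spin}. That result puts $F_4$-invariant spin$^9$ structures on $\mathbb{OP}^2 = F_4/\Spin(9)$, up to equivalence, in bijection with the Lie group homomorphisms $\varphi\colon \Spin(9)\to\SO(9)$ such that $\sigma\times\varphi\colon \Spin(9)\to\SO(16)\times\SO(9)$ lifts along $\lambda^9_{16}$, taken modulo conjugation by $\SO(9)$. Since $\Spin(9)$ is simply connected, the obstruction to lifting (which lives in $\pi_1$, cf.\ Proposition~\ref{prop:fg}) vanishes automatically, and moreover the lift $\phi_i$ of each $\sigma\times\varphi_i$ is unique. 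Hence the only work is to classify the homomorphisms $\varphi$ themselves up to $\SO(9)$-conjugation.

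For that step I would argue exactly as in Proposition~\ref{prop:reps2}, with $\SO(3)$ replaced by $\SO(9)$. If $\varphi$ is non-trivial, then since $\mathfrak{so}(9)$ is simple its kernel is discrete; as $\Spin(9)$ is compact, $\varphi(\Spin(9))$ is a closed subgroup of $\SO(9)$ of the same dimension as $\SO(9)$, hence open, hence all of $\SO(9)$ by connectedness. Thus the induced representation of $\Spin(9)$ on $\mathbb{R}^9$ is irreducible, and since the vector representation $\lambda_9$ is the unique $9$-dimensional irreducible real representation of $\Spin(9)$ (the next smallest non-trivial one being the $16$-dimensional spin representation), $\varphi$ is conjugate to $\lambda_9$ inside $\GL(9,\mathbb{R})$. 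The same linear-algebra trick as in Proposition~\ref{prop:reps2} then upgrades this to conjugacy in $\SO(9)$: if $T^{-1}\varphi_1 T = \varphi_2$ with $\varphi_1$, $\varphi_2$ both surjective, then $TT^t$ commutes with all of $\SO(9)$, hence is a positive scalar multiple of the identity, and $\det(T)^{-1/9}T \in \SO(9)$ conjugates $\varphi_1$ to $\varphi_2$. Consequently, up to $\SO(9)$-conjugation there are exactly two homomorphisms $\Spin(9)\to\SO(9)$, namely $\varphi_0$ (trivial) and $\varphi_1 = \lambda_9$, and feeding these into Theorem~\ref{cor:inv_gen_spin} yields precisely the two structures $F_4\times_{\phi_i}\Spin^9(16)$, $i=0,1$.

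The only non-formal input here is the representation-theoretic fact that $\mathbb{R}^9$ is the smallest non-trivial real irreducible representation of $\Spin(9)$, so that irreducibility of the $9$-dimensional representation forces it to be $\lambda_9$; this is standard (the fundamental representations of $B_4$ have dimensions $9$, $36$, $84$, $16$). Everything else is a verbatim repetition of the argument already given for Proposition~\ref{prop:reps2} together with the discussion immediately preceding the statement, so the proof itself is short: it suffices to point to Theorem~\ref{cor:inv_gen_spin} and to the classification of homomorphisms $\Spin(9)\to\SO(9)$ established above.
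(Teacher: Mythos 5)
Your proposal is correct and follows essentially the same route the paper takes: the paper reduces to Theorem~\ref{cor:inv_gen_spin}, notes that simple connectedness of $\Spin(9)$ makes the lifting obstruction vanish, and then invokes Proposition~\ref{prop:reps2} (``by essentially the same argument'') to conclude that the only homomorphisms $\Spin(9)\to\SO(9)$ up to $\SO(9)$-conjugation are the trivial one and $\lambda_9$. You have simply spelled out that last step explicitly, including the needed fact that $\R^9$ is the unique nontrivial irreducible real representation of $\Spin(9)$ of dimension $\leq 9$ and the $TT^t$ normalisation trick, which is a faithful transcription of the argument the paper sketches.
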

		
		\subsubsection[Invariant spin\^{}r spinors]{Invariant spin$^{\boldsymbol{r}}$ spinors}
		
		As in the case of $\mathbb{HP}^n$, the octonionic projective plane $\mathbb{OP}^2$ does not admit any invariant spinors (cf.\ the discussion before Proposition~\ref{HPn_twisting_type}), so from this point forward we consider the non-trivial invariant spin$^9$ structure (i.e., the one corresponding to $i=1$ in the preceding theorem). In order to describe its twisted spin$^9$ spinors we first need a small lemma describing the decomposition of the spin lift of the isotropy representation as a direct sum of highest weight modules. This result may be found, written in a slightly different form and without proof, in \cite[Section~7]{Fri01_weak_spin9}; we include a sketch of the proof here as the notation and formulas will be useful for subsequent discussion.

		\begin{Lemma}[{\cite{Fri01_weak_spin9}}]\label{Sigma16_decomp_lemma}
			 As modules for $\Spin(9)^{\C}$, the spin lift $\widetilde{\sigma}$ of the isotropy representation decomposes as
			\begin{align}\label{Sigma16_decomp}
				\Sigma_{16} \simeq \underbrace{V(\omega_1+\omega_4)}_{\Sigma_{16}^-} \oplus \underbrace{V(\omega_3) \oplus V(2\omega_1)}_{\Sigma_{16}^+},
			\end{align}
			where $V(\mu)$ denotes the irreducible representation of highest weight $\mu$ and $\omega_i$, $i=1,2,3,4$ denote the fundamental weights of $\mathfrak{spin}(9)^{\C} \cong \mathfrak{so}(9,\C)$.
		\end{Lemma}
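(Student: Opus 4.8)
The plan is to determine the weight system of $\Sigma_{16}$ restricted to $\mathfrak{spin}(9)^{\C}$ and read off the decomposition into highest-weight modules. First I would fix a Cartan subalgebra $\mathfrak t\subset\mathfrak{spin}(9)$ compatible with the explicit matrices \eqref{eqn_number_spin9_parti}--\eqref{eqn_number_spin9_partii}: concretely, the family of commuting elements which map under the isotropy representation to the block rotations $e_1\wedge e_2,\ e_3\wedge e_4,\ e_5\wedge e_6,\ e_7\wedge e_8$ of $\mathfrak{so}(16)$. Since the isotropy representation is the real spin representation $\Delta_9$, its complexification $\mathfrak m^{\C}\cong V(\omega_4)$ has the standard weight system $\bigl\{\tfrac12(\pm\epsilon_1\pm\epsilon_2\pm\epsilon_3\pm\epsilon_4)\bigr\}$, where $\epsilon_1,\dots,\epsilon_4$ is the standard basis of $\mathfrak t^*$; this can be read off directly from the matrices, and it pins down the embedding $\mathfrak t\hookrightarrow\mathfrak{so}(16)$.

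Next I would use the classical description of the half-spin modules: as a module for $\mathfrak{so}(16)$, the weights of $\Sigma_{16}$ are $\tfrac12(s_1 f_1+\dots+s_8 f_8)$ with $s_i\in\{\pm1\}$, where $\pm f_1,\dots,\pm f_8$ enumerate the weights of the defining representation $\C^{16}$, and $\Sigma_{16}^{+}$ (resp.\ $\Sigma_{16}^{-}$) collects those with an even (resp.\ odd) number of $s_i=-1$. Restricting along $\mathfrak{spin}(9)\hookrightarrow\mathfrak{so}(16)$ simply means taking $\{f_1,\dots,f_8\}$ to be one representative from each $\pm$-pair among the sixteen weights of $\Delta_9$ found above; equivalently, one can carry out the computation directly in the exterior-forms model $\Sigma_{16}\cong\Lambda^{\bullet}L'$ of Section~\ref{sec:diff_forms_approach} by letting $\mathfrak t$ act on the monomials $y_{j_1}\wedge\cdots\wedge y_{j_\ell}$. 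A finite bookkeeping in the dominant Weyl chamber $\epsilon_1\geq\epsilon_2\geq\epsilon_3\geq\epsilon_4\geq 0$ then shows that the dominant weights occurring in $\Sigma_{16}^{-}$ are $\omega_1+\omega_4$ and $\omega_4$, and those occurring in $\Sigma_{16}^{+}$ are $2\omega_1,\omega_3,\omega_2,\omega_1,0$. Crucially, $\omega_1+\omega_4$ is the unique maximal weight of $\Sigma_{16}^{-}$, whereas $\Sigma_{16}^{+}$ has two incomparable maximal weights $2\omega_1$ and $\omega_3$ --- note that $2\omega_1-\omega_3=\epsilon_1-\epsilon_2-\epsilon_3$ is not a sum of simple roots --- each of which therefore supports a highest-weight vector inside $\Sigma_{16}^{+}$.

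Finally I would close the argument by a dimension count. By the Weyl dimension formula for $B_4$ one computes $\dim V(\omega_1+\omega_4)=128$, $\dim V(2\omega_1)=44$ and $\dim V(\omega_3)=84$. Maximality of $\omega_1+\omega_4$ produces a copy of $V(\omega_1+\omega_4)$ inside the $128$-dimensional module $\Sigma_{16}^{-}$, forcing $\Sigma_{16}^{-}\cong V(\omega_1+\omega_4)$; maximality of $2\omega_1$ and of $\omega_3$ produces copies of the non-isomorphic irreducibles $V(2\omega_1)$ and $V(\omega_3)$ inside $\Sigma_{16}^{+}$, and since $44+84=128=\dim\Sigma_{16}^{+}$ this already exhausts $\Sigma_{16}^{+}$, so $\Sigma_{16}^{+}\cong V(2\omega_1)\oplus V(\omega_3)$ (and all multiplicities are automatically $1$).

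The only real obstacle is the middle step: the explicit enumeration of the dominant weights of $\Sigma_{16}^{\pm}$ together with the parity bookkeeping distinguishing the two half-spinor modules. This is elementary but a little delicate, and it should be kept consistent with the conventions of Section~\ref{sec:diff_forms_approach}; it can be shortened by exhibiting the relevant highest-weight vectors (for instance the extreme vectors $1$ and $y_1\wedge\cdots\wedge y_8$, and a suitable intermediate wedge) directly in the exterior-forms model, or by invoking the corresponding branching tables. As the decomposition is already due to Friedrich \cite{Fri01_weak_spin9}, we present only this sketch, the purpose being to fix the notation and formulas for the subsequent description of invariant spin$^{9}$ spinors on $\mathbb{OP}^2$.
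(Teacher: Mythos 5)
Your argument is mathematically correct and reaches the right decomposition, but it takes a genuinely different route from the paper. You carry out a classical weight-theoretic branching: restrict $\Sigma_{16}\rvert_{\mathfrak{spin}(9)}$ by choosing one representative $f_i$ from each $\pm$-pair of weights of $\Delta_9$, enumerate the dominant weights of $\Sigma_{16}^{\pm}$ together with the parity bookkeeping, identify the maximal ones ($\omega_1+\omega_4$ in $\Sigma_{16}^-$; the incomparable pair $2\omega_1$, $\omega_3$ in $\Sigma_{16}^+$), and close with a Weyl dimension count $128 = 128 = 44 + 84$. The paper instead pins down an explicit Cartan subalgebra and simple root vectors inside $\mathfrak{so}(16)$ via the matrices \eqref{eqn_number_spin9_parti}--\eqref{eqn_number_spin9_partii}, finds the three highest-weight vectors in the exterior-forms model by computer algebra, reads off their weights, and defers to Friedrich \cite{Fri01_weak_spin9} for the assignment to $\Sigma_{16}^{\pm}$. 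The two approaches buy different things: yours is self-contained and by hand, makes the multiplicity-one conclusion transparent via the dimension count, and needs no external reference; the paper's yields explicit highest-weight vectors as a byproduct, which are then indispensable for the construction in Theorem~\ref{OP2_inv_spinors_theorem} and Table~\ref{table:structure_of_rep_table}. The one point in your sketch that still needs care is the orientation convention fixing which half-spinor module is $\Sigma_{16}^{+}$ versus $\Sigma_{16}^{-}$ (you flag this, and your proposed remedy of exhibiting explicit highest-weight monomials $y_{j_1}\wedge\cdots\wedge y_{j_\ell}$ in the model of Section~\ref{sec:diff_forms_approach} does resolve it, converging at that point with the paper's method).
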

		\begin{proof}
			In order to take advantage of the explicit operators calculated in \eqref{eqn_number_spin9_parti}--\eqref{eqn_number_spin9_partii}, we view~$ { \mathfrak{spin}(9)^{\C}\cong \mathfrak{so}(9,\C)}$ as the set of $9\times 9$ skew-symmetric matrices in $\mathfrak{gl}(9,\C)$. We take the (real form of the) Cartan subalgebra spanned by \smash{$h_j:= -{\rm i}E^{(9)}_{2j-1,2j}$}, $j=1,2,3,4$, together with the (positive) re-scaling of the Killing form such that the $h_j$'s are orthogonal and unit length. Letting $v_j:=h_j^{\flat}$, we have the simple roots \[\alpha_1=v_1-v_2, \qquad \alpha_2=v_2-v_3, \qquad \alpha_3=v_3-v_4,\qquad \alpha_4=v_4, \]
			and the corresponding fundamental weights \smash{$\omega_j:=\frac{2\alpha_j}{||\alpha_j||^2}$} are given by
			\[
			\omega_1=v_1,\qquad \omega_2=v_1+v_2,\qquad \omega_3=v_1+v_2+v_3, \qquad \omega_4 = \frac{1}{2}(v_1+v_2+v_3+v_4).\]
			The root vectors $X_i:= X_{\alpha_i}$ associated to the simple roots $\alpha_i$ are
			\begin{gather*}
				X_{1}= E^{(9)}_{1, 3} + E^{(9)}_{2, 4} +
				{\rm i}\bigl(-E^{(9)}_{2, 3} + E^{(9)}_{1, 4}\bigr),\qquad
				X_{2} = E^{(9)}_{3, 5} + E^{(9)}_{4, 6} +
				{\rm i}\bigl(-E^{(9)}_{4, 5} + E^{(9)}_{3, 6}\bigr),\\
				X_{3} = E^{(9)}_{5, 7} + E^{(9)}_{6, 8} +
				{\rm i}\bigl(-E^{(9)}_{6, 7} + E^{(9)}_{5, 8}\bigr),\qquad
				X_{4} = E^{(9)}_{7, 9} - {\rm i}E^{(9)}_{8, 9},
			\end{gather*}
and the root vectors associated to the roots $-\alpha_i$, $i=1,2,3,4$ are given by $Y_i:=Y_{\alpha_i}:=\overline{X_{i}}$. We note that this setup is slightly unusual\footnote{One usually chooses a different realization of the Lie algebra $\mathfrak{so}(9,\C)$ in order to make the elements of the Cartan subalgebra diagonal matrices, but that realization is less convenient for our purposes here.} and can be found, e.g., in \cite{Ziller_2010}. Using the explicit formulas for $\sigma$ from \eqref{eqn_number_spin9_parti}--\eqref{eqn_number_spin9_partii}, we find that the Cartan subalgebra generators $h_i$ and simple root vectors $X_i$ act in the complexified isotropy representation $\mathfrak{m}^{\C}\simeq \Sigma_9$ by the operators
			\begin{align*}
				\sigma(h_1) &= -\frac{{\rm i}}{2}( -e_{1, 5} + e_{2, 6} + e_{3, 7} -
				e_{4, 8} + e_{9, 13} - e_{10, 14} - e_{11, 15} +
				e_{12, 16}) , \\
				\sigma(h_2) &= -\frac{{\rm i}}{2}( e_{1, 11} - e_{2, 12} - e_{3, 9} +
				e_{4, 10} + e_{5, 15} - e_{6, 16} - e_{7, 13} +
				e_{8, 14}) , \\
				\sigma(h_3) &=-\frac{{\rm i}}{2}( e_{1, 7} - e_{2, 8} - e_{3, 5} + e_{4, 6} +
				e_{9, 15} - e_{10, 16} - e_{11, 13} + e_{12, 14} ), \\
				\sigma(h_4) &=-\frac{{\rm i}}{2}( -e_{1, 7} - e_{2, 8} + e_{3, 5} +
				e_{4, 6} - e_{9, 15} - e_{10, 16} + e_{11, 13} +
				e_{12, 14}),
			\end{align*}
			and
			\begin{gather*}
				2\sigma(X_{1})= e_{1, 3} - {\rm i} e_{1, 7} - {\rm i} e_{1, 9} -
				e_{1, 13} - e_{2, 4} - {\rm i} e_{2, 8} -
				{\rm i} e_{2, 10} + e_{2, 14} - {\rm i} e_{3, 5} -
				{\rm i} e_{3, 11} \\
\phantom{2\sigma(X_{1})= }{} + e_{3, 15} - {\rm i} e_{4, 6} -
				{\rm i} e_{4, 12} - e_{4, 16} + e_{5, 7} +
				e_{5, 9} - {\rm i} e_{5, 13} - e_{6, 8} -
				e_{6, 10} - {\rm i} e_{6, 14} \\
\phantom{2\sigma(X_{1})= }{} - e_{7, 11} -
				{\rm i} e_{7, 15} + e_{8, 12} - {\rm i} e_{8, 16} -
				e_{9, 11} - {\rm i} e_{9, 15} + e_{10, 12} -
				{\rm i} e_{10, 16} - {\rm i} e_{11, 13} \\
\phantom{2\sigma(X_{1})= }{} - {\rm i} e_{12, 14} -
				e_{13, 15} + e_{14, 16} , \\
				2\sigma(X_{2}) = -{\rm i} e_{1, 4} + e_{1, 6} - e_{1, 10} +
				{\rm i} e_{1, 16} - {\rm i} e_{2, 3} - e_{2, 5} +
				e_{2, 9} + {\rm i} e_{2, 15} + e_{3, 8} -
				e_{3, 12} \\
\phantom{2\sigma(X_{2}) = }{}- {\rm i} e_{3, 14} - e_{4, 7} +
				e_{4, 11} - {\rm i} e_{4, 13} - {\rm i} e_{5, 8} +
				{\rm i} e_{5, 12} - e_{5, 14} - {\rm i} e_{6, 7} +
				{\rm i} e_{6, 11} + e_{6, 13} \\
\phantom{2\sigma(X_{2}) = }{}- {\rm i} e_{7, 10} -
				e_{7, 16} - {\rm i} e_{8, 9} + e_{8, 15} -
				{\rm i} e_{9, 12} + e_{9, 14} - {\rm i} e_{10, 11} -
				e_{10, 13} + e_{11, 16} \\
\phantom{2\sigma(X_{2}) = }{} - e_{12, 15} -
				{\rm i} e_{13, 16} - {\rm i} e_{14, 15} ,\\
				2\sigma(X_{3}) = -2 e_{1, 3} - 2 {\rm i} e_{1, 5} - 2 {\rm i} e_{3, 7} +
				2 e_{5, 7} - 2 e_{9, 11} - 2 {\rm i} e_{9, 13} -
				2 {\rm i} e_{11, 15} + 2 e_{13, 15} ,\\
				2\sigma(X_{4})= {\rm i} e_{1, 4} + e_{1, 6} - {\rm i} e_{2, 3} -
				e_{2, 5} - e_{3, 8} + e_{4, 7} +
				{\rm i} e_{5, 8} - {\rm i} e_{6, 7} + {\rm i} e_{9, 12} +
				e_{9, 14} \\
\phantom{2\sigma(X_{4})=}{}- {\rm i} e_{10, 11} - e_{10, 13} -
				e_{11, 16} + e_{12, 15} + {\rm i} e_{13, 16} -
				{\rm i} e_{14, 15} .
			\end{gather*}
			Considering the action of the lifts $\widetilde{\sigma}(h_i)$, $\widetilde{\sigma}(X_{i}) \in\mathfrak{spin}(16)^{\C} $ in the spin representation, a straightforward calculation using computer algebra software yields three linearly independent joint eigenvectors for the $\widetilde{\sigma}(h_i)$ which are simultaneously annihilated by the action of each $\widetilde{\sigma}(X_{i})$ (i.e., highest weight vectors). The corresponding weights are
			\[
			\frac{1}{2}(3v_1 +v_2+v_3+v_4) = \omega_1 + \omega_4, \qquad v_1+v_2+v_3 = \omega_3 , \qquad 2v_1 = 2\omega_1 ,
			\]
			and the assertion that $\Sigma_{16}^- \simeq V(\omega_1+\omega_4)$ and $\Sigma_{16}^+ \simeq V(\omega_3)\oplus V(2\omega_1) $ may be deduced from \cite[Section~7]{Fri01_weak_spin9}.
		\end{proof}

		Note that the preceding lemma immediately recovers the fact that the invariant spin structure carries no invariant spinors. It also allows us to readily describe the smallest twisting for which~$\mathbb{OP}^2$ admits invariant twisted spin$^9$ spinors.
		\begin{Theorem}
			The $F_4$-invariant spinor type of $\mathbb{OP}^2$ is $\sigma\bigl(\mathbb{OP}^2,F_4\bigr)=9$, and the twisting of the spinor bundle which realises this is $m=3$. Furthermore, the space of invariant $3$-twisted spin$^9$ spinors has complex dimension $4$.
		\end{Theorem}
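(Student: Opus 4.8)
The strategy is to translate the problem into a multiplicity computation for representations of $\Spin(9)$, using that $\mathbb{OP}^2 = F_4/\Spin(9)$ is a symmetric space with connected isotropy, so that invariant $m$-twisted spin$^r$ spinors are exactly the $\mathfrak{spin}(9)^{\C}$-invariant vectors in $\Sigma_{16,r}^m$. For the lower bound, recall that (as already observed) for $1 \leq r \leq 8$ the only homomorphism $\Spin(9) \to \SO(r)$ is trivial, so $\Spin(9)$ acts trivially on each $\Sigma_r^{\otimes m}$ and $\Sigma_{16,r}^m\restr{\mathfrak{spin}(9)}$ is a direct sum of copies of $\Sigma_{16}$; by Lemma~\ref{Sigma16_decomp_lemma} this has no trivial summand, so there are no invariant spinors. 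The same argument applies verbatim to the $r = 9$ structure $F_4 \times_{\phi_0} \Spin^9(16)$ attached to the trivial homomorphism $\varphi_0$, since there too $\Spin(9)$ acts trivially on the auxiliary factors. It therefore remains to analyze the structure $F_4 \times_{\phi_1} \Spin^9(16)$ with $\varphi_1 = \lambda_9$.

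Since $\varphi_1 = \lambda_9$ lifts to the identity of $\Spin(9)$, the homomorphism $\phi_1$ has the form $h \mapsto [\widetilde{\sigma}(h), h]$, and hence $\Spin(9)$ acts on $\Sigma_{16,9}^m = \Sigma_{16} \otimes \Sigma_9^{\otimes m}$ by $\widetilde{\sigma}$ on the first factor and by the spin representation, which is $V(\omega_4)$ as a $\Spin(9)^{\C}$-module, on each of the $m$ auxiliary factors. As $H = \Spin(9)$ is connected, Schur's lemma together with the self-duality of $\Spin(9)$-representations and the decomposition $\Sigma_{16} \simeq V(\omega_1+\omega_4) \oplus V(\omega_3) \oplus V(2\omega_1)$ of Lemma~\ref{Sigma16_decomp_lemma} give
\[
\dim_{\C}(\Sigma_{16,9}^m)_{\mathrm{inv}} = \dim \Hom_{\Spin(9)^{\C}}\bigl(\Sigma_{16}, \Sigma_9^{\otimes m}\bigr) = \sum_{\mu \in \{\omega_1+\omega_4,\, \omega_3,\, 2\omega_1\}} \bigl[\Sigma_9^{\otimes m} : V(\mu)\bigr].
\]
The essential observation is a parity argument: for $m$ odd every weight of $\Sigma_9^{\otimes m}$ has all four coordinates in $\tfrac{1}{2} + \Z$, whereas $\omega_3$ and $2\omega_1$ have integer coordinates, so the last two multiplicities vanish and $\dim_{\C}(\Sigma_{16,9}^m)_{\mathrm{inv}} = [\Sigma_9^{\otimes m} : V(\omega_1+\omega_4)]$. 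For $m = 1$ this equals $0$ because $\Sigma_9 \simeq V(\omega_4) \not\simeq V(\omega_1+\omega_4)$, so no invariant $1$-twisted spin$^9$ spinors exist; hence $m = 3$ is the smallest candidate.

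It remains to prove $[\Sigma_9^{\otimes 3} : V(\omega_1+\omega_4)] = 4$. I would first invoke the well-known $\Spin(9)$-module decomposition $\Sigma_9 \otimes \Sigma_9 \simeq \bigoplus_{k=0}^{4} \Lambda^k\C^9$ (which follows from the $\Spin(9)$-equivariance of the symbol map, the isomorphism between the even part of the complexified Clifford algebra and $\End(\Sigma_9)$, Hodge duality $\Lambda^k\C^9 \simeq \Lambda^{9-k}\C^9$, and self-duality of $\Sigma_9$; see, e.g., \cite{LM}), and then decompose $\Sigma_9^{\otimes 3} \simeq \bigoplus_{k=0}^{4} \bigl(V(\omega_4) \otimes \Lambda^k\C^9\bigr)$ using the fact that $\omega_4$ is a minuscule weight of $B_4$, so that for any dominant $\lambda$ one has $V(\lambda) \otimes V(\omega_4) = \bigoplus V(\lambda + \nu)$, the sum running over the sixteen weights $\nu = \tfrac{1}{2}(\pm v_1 \pm v_2 \pm v_3 \pm v_4)$ for which $\lambda + \nu$ is dominant, each appearing once. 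Collecting the summands and counting those equal to $\omega_1 + \omega_4$ gives the multiplicity $4$; alternatively the count may be carried out with computer algebra, as was done for Lemma~\ref{Sigma16_decomp_lemma}. Combining the three steps yields $\sigma(\mathbb{OP}^2, F_4) = 9$, realised at the twisting $m = 3$, with $\dim_{\C}(\Sigma_{16,9}^3)_{\mathrm{inv}} = 4$. The main obstacle is precisely this last multiplicity count, which requires either an explicit tensor-product computation in the root system $B_4$ or a machine calculation; everything else is representation-theoretic bookkeeping built on Lemma~\ref{Sigma16_decomp_lemma}.
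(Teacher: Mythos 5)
Your argument is correct and reaches the same conclusion as the paper, but the key multiplicity computation is carried out by a genuinely different route. The paper applies self-duality to reduce the problem to counting copies of $V(\omega_1+\omega_4)$, $V(\omega_3)$ and $V(2\omega_1)$ inside $\Sigma_9^{\otimes 3}$, then simply feeds $\Sigma_9^{\otimes 3}$ into the LiE package and reads off the full decomposition $5V(\omega_4)\oplus V(3\omega_4)\oplus 2V(\omega_3+\omega_4)\oplus 3V(\omega_2+\omega_4)\oplus 4V(\omega_1+\omega_4)$, from which the multiplicity $4$ is visible. You instead insert a parity (weight-lattice congruence) argument: since every weight of $\Sigma_9^{\otimes m}$ for odd $m$ lies in $\frac{1}{2}\Z^4\setminus \Z^4$ whereas $\omega_3$ and $2\omega_1$ lie in $\Z^4$, only $V(\omega_1+\omega_4)$ can appear, which immediately disposes of $m=1$ as well. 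You then compute $[\Sigma_9^{\otimes 3}:V(\omega_1+\omega_4)]=4$ by hand via $\Sigma_9\otimes \Sigma_9\simeq \bigoplus_{k=0}^4 \Lambda^k\C^9$ and the minuscule Pieri rule $V(\lambda)\otimes V(\omega_4)=\bigoplus_{\nu}V(\lambda+\nu)$; indeed one copy of $V(\omega_1+\omega_4)$ arises from each of $\Lambda^1$, $\Lambda^2$, $\Lambda^3$, $\Lambda^4$ (via the unique weight $\nu$ of $V(\omega_4)$ that brings $\omega_k$ or $2\omega_4$ to $\omega_1+\omega_4$), and none from $\Lambda^0$, giving $4$ as claimed. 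Your approach is slightly longer but more self-contained and structurally transparent -- the parity argument in particular explains conceptually why only the single summand $V(\omega_1+\omega_4)$ of $\Sigma_{16}$ is relevant for odd twistings, a fact the paper's decomposition displays only a posteriori -- whereas the paper's approach is more compact at the cost of a black-box machine computation. The one place you hand-wave is the final ``collecting the summands'' step, but as noted above the count does work out, and you also correctly flag that it could instead be done by software.
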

		\begin{proof}
			First, we recall that every representation of $\mathfrak{so}(9,\C)$ is self-dual (see, e.g., \cite{Samelson}). Therefore, using a similar argument as in the proof of Proposition~\ref{HPn_twisting_type}, and in light of the preceding lemma, it suffices to show that $\Sigma_9^{\otimes 3}$ is the smallest odd tensor power of $\Sigma_9\simeq V(\omega_4)$ which contains a copy of $V(\omega_1+\omega_4)$, $V(\omega_3)$, or $V(2\omega_1)$. It is easily verified using, e.g., the LiE software package \cite{LiE} that \begin{align} \label{Sigma93_decomp}\Sigma_9^{\otimes 3} &\simeq 5 V(\omega_4) \oplus V(3\omega_4) \oplus 2V(\omega_3+\omega_4)\oplus 3V(\omega_2+\omega_4) \oplus 4V(\omega_1+\omega_4 ) .
			\end{align}
			Finally, using self-duality, it follows from (\ref{Sigma16_decomp}) and (\ref{Sigma93_decomp}) that
			\[
			\dim_{\C} \bigl(\Sigma_{16,9}^3\bigr)_{\inv} = \dim_{\C} \bigl(\Sigma_{16} \otimes \Sigma_9^{\otimes 3}\bigr)^{\Spin(9)} = \dim_{\C} \text{Hom}_{\Spin(9)}\bigl(\Sigma_{16} , \Sigma_9^{\otimes 3}\bigr) = 4.\tag*{\qed}
			\] \renewcommand{\qed}{}
		\end{proof}

		Now we examine more closely the invariant 3-twisted spin$^9$ spinors from the preceding theorem. This $4$-dimensional space corresponds to the pairings of the $4$ copies of $V(\omega_1+\omega_4)$ in~(\ref{Sigma93_decomp}) with the single copy in~(\ref{Sigma16_decomp}), so in order to obtain formulas for the spinors we first need to clarify the algebraic structure of this representation. With all notation as above, one finds using computer algebra software an explicit highest weight vector $w_0$ (unique up to scaling) generating $\Sigma_{16}^- \simeq V(\omega_1+\omega_4)\subseteq \Sigma_{16}$, and one may verify furthermore that any other weight vector can be obtained from $w_0$ by applying at most $18$ lowering operators $Y_i$, $i=1,2,3,4$. Writing $Y_{\mathcal{I}} := Y_{i_1}.Y_{i_2}\dots Y_{i_k}$ for a multi-index $\mathcal{I}=\{i_1,\dots,i_k\}$, one possible minimal choice of multi-indices $\bigcup_{k=0}^{18}\{ \mathcal{I}_{k,\ell}\}_{\ell=1}^{\mu_k}$ generating $V(\omega_1+\omega_4)$ is given in Table~\ref{table:structure_of_rep_table}, where $\mu_k$ denotes the number of $k$-multi-indices in the generating set. In what follows we describe explicitly the invariant spinors, using a more sophisticated version of the technique from the proof of Proposition~\ref{HPn_twisted_inv_spinor_basis}.

		\begin{Theorem}\label{OP2_inv_spinors_theorem}
			A basis for the space of $F_4$-invariant $3$-twisted spin$^9$ spinors on $\OP^2$ is given~by
			\begin{align}\label{OP2_spinors_formula}
				\psi_{p}:= \sum_{k=0}^{18} \sum_{\ell=1}^{\mu_k} \bigl(\widehat{Y_{\mathcal{I}_{k,\ell}}.w_0}\bigr)^{\sharp}\otimes (Y_{\mathcal{I}_{k,\ell}}.w_p ) , \qquad p=1,2,3,4,
			\end{align}
			where $\sharp\colon \Sigma_{16}^* \to \Sigma_{16}$ is the musical isomorphism, $w_p$ $(p=1,2,3,4)$ denote highest weight vectors for the four copies of $\Sigma_{16}^-$ inside \smash{$\Sigma_9^{\otimes 3}$}, the indices $\mathcal{I}_{k,\ell}$ are as in Table~{\rm\ref{table:structure_of_rep_table}}, and for any $(Y_{\mathcal{I}_{k,\ell}}.w_0 )\in \Sigma_{16}^- \subseteq \Sigma_{16}$ we denote by \smash{$\widehat{Y_{\mathcal{I}_{k,\ell}}.w_0 }\in \Sigma_{16}^*$} the corresponding dual map sending $Y_{\mathcal{I}_{k',\ell'}}.w_0\mapsto \delta_{k,k'}\delta_{\ell,\ell'}$ and $\Sigma_{16}^+\mapsto 0$.
		\end{Theorem}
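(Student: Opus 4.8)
The plan is to reprise, in a considerably heavier form, the strategy behind the proof of Theorem~\ref{HPn_twisted_inv_spinor_basis}. First I would record the identification $(\Sigma_{16,9}^3)_{\inv} = (\Sigma_{16}\otimes\Sigma_9^{\otimes 3})^{\Spin(9)} \cong \Hom_{\Spin(9)}(\Sigma_{16},\Sigma_9^{\otimes 3})$, where the last isomorphism uses that every $\mathfrak{so}(9,\C)$-module is self-dual (so $\Sigma_{16}^*\cong\Sigma_{16}$) and where this Hom-space is $4$-dimensional by Lemma~\ref{Sigma16_decomp_lemma} together with the decomposition~\eqref{Sigma93_decomp}. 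By Schur's lemma, any $\mathcal{T}\in\Hom_{\Spin(9)}(\Sigma_{16},\Sigma_9^{\otimes 3})$ must annihilate $\Sigma_{16}^+\simeq V(\omega_3)\oplus V(2\omega_1)$, since neither $V(\omega_3)$ nor $V(2\omega_1)$ appears in~\eqref{Sigma93_decomp}, and $\mathcal{T}$ is determined on the irreducible module $\Sigma_{16}^-\simeq V(\omega_1+\omega_4)$ by the image $\mathcal{T}(w_0)$ of the highest weight vector $w_0$, which must itself be a highest weight vector of weight $\omega_1+\omega_4$ in $\Sigma_9^{\otimes 3}$ (or zero). Choosing a basis $w_1,\dots,w_4$ of the $4$-dimensional space of such highest weight vectors in $\Sigma_9^{\otimes 3}$ then produces, by standard highest weight representation theory, a basis $\mathcal{T}_1,\dots,\mathcal{T}_4$ of $\Hom_{\Spin(9)}(\Sigma_{16},\Sigma_9^{\otimes 3})$ characterised by $\mathcal{T}_p(w_0)=w_p$ and $\mathcal{T}_p|_{\Sigma_{16}^+}=0$.

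Next I would turn each $\mathcal{T}_p$ into an explicit tensor. Since $w_0$ generates $\Sigma_{16}^-=V(\omega_1+\omega_4)$ under the lowering operators $Y_1,\dots,Y_4$, the family $\{Y_{\mathcal{I}_{k,\ell}}.w_0\}$ indexed by the multi-indices of Table~\ref{table:structure_of_rep_table} is a minimal generating set, hence a basis, of $\Sigma_{16}^-$, so the dual functionals $\widehat{Y_{\mathcal{I}_{k,\ell}}.w_0}\in\Sigma_{16}^*$ are well defined. Equivariance of $\mathcal{T}_p$ then gives $\mathcal{T}_p(Y_{\mathcal{I}_{k,\ell}}.w_0)=Y_{\mathcal{I}_{k,\ell}}.w_p$, whence, as an element of $\Sigma_{16}^*\otimes\Sigma_9^{\otimes 3}=\Hom(\Sigma_{16},\Sigma_9^{\otimes 3})$, one has $\mathcal{T}_p=\sum_{k,\ell}\widehat{Y_{\mathcal{I}_{k,\ell}}.w_0}\otimes(Y_{\mathcal{I}_{k,\ell}}.w_p)$. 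Now a $\Spin(9)$-equivariant linear map is exactly a $\Spin(9)$-invariant element of $\Sigma_{16}^*\otimes\Sigma_9^{\otimes 3}$, so applying the ($\Spin(9)$-equivariant) musical isomorphism $\sharp\colon\Sigma_{16}^*\to\Sigma_{16}$ attached to the invariant bilinear form carries $\mathcal{T}_p$ to a $\Spin(9)$-invariant element $\psi_p:=(\sharp\otimes\mathrm{id})(\mathcal{T}_p)$ of $\Sigma_{16}\otimes\Sigma_9^{\otimes 3}$, i.e.\ to an invariant $3$-twisted spin$^9$ spinor, and this element is precisely the one displayed in~\eqref{OP2_spinors_formula}.

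It then remains to check that $\psi_1,\dots,\psi_4$ are linearly independent. A relation $\sum_p c_p\psi_p=0$ is equivalent to $\sum_p c_p\mathcal{T}_p=0$, since $\sharp\otimes\mathrm{id}$ is an isomorphism, and evaluating the latter at $w_0$ gives $\sum_p c_p w_p=0$, forcing $c_1=\dots=c_4=0$ because the $w_p$ were chosen linearly independent. As $\dim_\C(\Sigma_{16,9}^3)_{\inv}=4$, these four spinors form a basis, which is the assertion of the theorem.

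The structural part above is essentially formal; the real work, exactly as in Lemma~\ref{Sigma16_decomp_lemma}, lies in the explicit input it needs, and this is where I expect the difficulty to be concentrated: one must produce, from the operators~\eqref{eqn_number_spin9_parti}--\eqref{eqn_number_spin9_partii}, the highest weight vector $w_0\in\Sigma_{16}^-$ and the four highest weight vectors $w_1,\dots,w_4\in\Sigma_9^{\otimes 3}$, verify that the $18$-layer family of multi-indices in Table~\ref{table:structure_of_rep_table} really does span $\Sigma_{16}^-$, and write $\sharp$ out in the orthonormal $y$-basis so that~\eqref{OP2_spinors_formula} becomes an effectively computable expression. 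All of this is carried out with computer algebra, as in the $\mathbb{HP}^n$ case, but the bookkeeping --- an $18$-step generating procedure inside a $128$-dimensional module, paired against a threefold tensor power --- is substantially more involved.
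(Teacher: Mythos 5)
The proposal mirrors the paper's argument step for step: identify $(\Sigma_{16,9}^3)_{\inv}$ with $\Hom_{\Spin(9)}\bigl(\Sigma_{16},\Sigma_9^{\otimes 3}\bigr)$, construct equivariant maps $T_p\colon \Sigma_{16}^-\to\Sigma_9^{\otimes 3}$ from the highest weight vectors $w_0,w_p$ and the lowering-operator basis of Table~\ref{table:structure_of_rep_table}, extend by zero on $\Sigma_{16}^+$, realise each $T_p$ as a tensor in the dual basis, and apply the musical isomorphism $(\Sigma_{16}^-)^*\otimes\Sigma_9^{\otimes 3}\simeq \Sigma_{16}^-\otimes\Sigma_9^{\otimes 3}$. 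You make explicit two points the paper leaves implicit (the Schur-lemma reason that intertwiners vanish on $\Sigma_{16}^+$, and the evaluation-at-$w_0$ linear-independence check), but the strategy and the essential computational ingredients are exactly those of the paper's proof.
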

		\begin{proof}
			From the preceding discussion and Table~\ref{table:structure_of_rep_table}, we have the highest weight vector $w_0$ for~${\Sigma_{16}^- \subseteq \Sigma_{16}}$, together with explicit sequences of lowering operators $Y_i$ generating this subrepresentation. Altogether this gives four $\mathfrak{spin}(9)^{\C}$-module isomorphisms $T_{p}\colon \Sigma_{16}^- \to \Sigma_9^{\otimes 3}$, $p=1,2,3,4$ defined by
			\[
			T_{p} \colon\ Y_{\mathcal{I}_{k,\ell}}.w_0 \mapsto Y_{\mathcal{I}_{k,\ell}}.w_p, \qquad k=0,\dots, 18, \quad \ell = 1,\dots, \mu_k,
			\]
			where the $\mathcal{I}_{k,\ell}$ are as in Table~\ref{table:structure_of_rep_table} and we use the convention $Y_{\varnothing}= \Id$. By abuse of notation, we also denote by $T_p$ the extensions to all of $\Sigma_{16}$ by $\Sigma_{16}^+\mapsto 0$. The spinors $\psi_p$ are the images of the $T_p$ under the $\mathfrak{spin}(9)^{\C}$-module isomorphism $(\Sigma_{16}^-)^* \otimes \Sigma_9^{\otimes 3} \simeq \Sigma_{16}^- \otimes \Sigma_9^{\otimes 3}$, which are precisely given by~\eqref{OP2_spinors_formula}.
		\end{proof}
		
\begin{table}[th!]\renewcommand{\arraystretch}{1.23}
			\centering {\footnotesize
				\begin{tabular}{ |l|l|l| }
					\hline
					$k$ & $\mu_k$ & $\mathcal{I}_{k,\ell}\ (\ell=1,\dots, \mu_k )$ \\
					\hline
					\hline
					$0$ & $1$ & $\varnothing$ \\
					$1$ & $2$ & $\{1\}, \{4\}$ \\
					$2$ & $3$ & $\{2,1\},\{1,4\},\{3,4\}$ \\
					$3$ & $5$ & $\{3,2,1\},\{2,1,4\},\{1,3,4\},\{2,3,4\},\{4,3,4\}$ \\
					$4$ & $6$ & $\{4,3,2,1\},\{3,2,1,4\},\{2,1,3,4\},\{4,1,3,4\},\{1,2,3,4\},\{4,2,3,4\}$ \\
					$5$ & $8$ & $\{3,4,3,2,1\},\{4,4,3,2,1\},\{2,3,2,1,4\},\{4,3,2,1,4\},\{1,2,1,3,4\},$ \\ & & $\{4,2,1,3,4\},\{4,1,2,3,4\},\{3,4,2,3,4\}$ \\
					$6$ & $10$ & $\{2,3,4,3,2,1\},\{4,3,4,3,2,1\},\{3,4,4,3,2,1\},\{4,4,4,3,2,1\},\{1,2,3,2,1,4\},$\\ & & $\{4,2,3,2,1,4\},\{4,1,2,1,3,4\},\{3,4,2,1,3,4\},\{3,4,1,2,3,4\},\{4,3,4,2,3,4\}$ \\
					$7$ & $11$ & $\{1,2,3,4,3,2,1\},\{4,2,3,4,3,2,1\},\{4,4,3,4,3,2,1\},\{2,3,4,4,3,2,1\},$ \\ & & $ \{4,3,4,4,3,2,1\},\{4,1,2,3,2,1,4\},\{3,4,2,3,2,1,4\},\{3,4,1,2,1,3,4\},$ \\ & & $\{4,3,4,2,1,3,4\},\{2,3,4,1,2,3,4\},\{4,3,4,1,2,3,4\}$ \\
					$8$ & $12$ & $\{4,1,2,3,4,3,2,1\},\{3,4,2,3,4,3,2,1\},\{4,4,2,3,4,3,2,1\},\{3,4,4,3,4,3,2,1\},$ \\ & & $ \{4,4,4,3,4,3,2,1\},\{1,2,3,4,4,3,2,1\},\{4,2,3,4,4,3,2,1\},\{3,4,1,2,3,2,1,4\},$ \\ & & $\{4,3,4,2,3,2,1,4\},\{2,3,4,1,2,1,3,4\},\{4,3,4,1,2,1,3,4\},\{4,2,3,4,1,2,3,4\}$ \\
					$9$ & $12$ & $\{3,4,1,2,3,4,3,2,1\},\{4,4,1,2,3,4,3,2,1\},\{4,3,4,2,3,4,3,2,1\},\{3,4,4,2,3,4,3,2,1\},$ \\ & & $\{4,4,4,2,3,4,3,2,1\},\{2,3,4,4,3,4,3,2,1\},\{4,3,4,4,3,4,3,2,1\},\{4,1,2,3,4,4,3,2,1\},$ \\ & & $\{2,3,4,1,2,3,2,1,4\},\{4,3,4,1,2,3,2,1,4\},\{4,2,3,4,1,2,1,3,4\},\{3,4,2,3,4,1,2,3,4\}$ \\
					$10$ & $12$ & $\{2,3,4,1,2,3,4,3,2,1\},\{4,3,4,1,2,3,4,3,2,1\},\{3,4,4,1,2,3,4,3,2,1\},$ \\ & & $\{4,4,4,1,2,3,4,3,2,1\},\{4,4,3,4,2,3,4,3,2,1\},\{2,3,4,4,2,3,4,3,2,1\},$ \\ & & $\{4,3,4,4,2,3,4,3,2,1\},\{1,2,3,4,4,3,4,3,2,1\},\{4,2,3,4,4,3,4,3,2,1\},$ \\ & & $\{4,2,3,4,1,2,3,2,1,4\},\{3,4,2,3,4,1,2,1,3,4\},\{4,3,4,2,3,4,1,2,3,4\}$ \\
					$11$ & $11$ & $\{4,2,3,4,1,2,3,4,3,2,1\},\{4,4,3,4,1,2,3,4,3,2,1\},\{2,3,4,4,1,2,3,4,3,2,1\},$ \\ & & $\{4,3,4,4,1,2,3,4,3,2,1\},\{3,4,4,3,4,2,3,4,3,2,1\},\{4,4,4,3,4,2,3,4,3,2,1\},$ \\ & & $\{1,2,3,4,4,2,3,4,3,2,1\},\{4,2,3,4,4,2,3,4,3,2,1\},\{4,1,2,3,4,4,3,4,3,2,1\},$ \\ & & $\{3,4,2,3,4,1,2,3,2,1,4\},\{4,3,4,2,3,4,1,2,1,3,4\}$ \\
					$12$ & $10$ & $\{3,4,2,3,4,1,2,3,4,3,2,1\},\{4,4,2,3,4,1,2,3,4,3,2,1\},\{3,4,4,3,4,1,2,3,4,3,2,1\},$ \\ & & $\{4,4,4,3,4,1,2,3,4,3,2,1\},\{1,2,3,4,4,1,2,3,4,3,2,1\},\{4,2,3,4,4,1,2,3,4,3,2,1\},$ \\ & & $\{2,3,4,4,3,4,2,3,4,3,2,1\},\{4,3,4,4,3,4,2,3,4,3,2,1\},\{4,1,2,3,4,4,2,3,4,3,2,1\},$\\ & & $\{4,3,4,2,3,4,1,2,3,2,1,4\}$ \\
					$13$ & $8$ & $\{4,3,4,2,3,4,1,2,3,4,3,2,1\},\{3,4,4,2,3,4,1,2,3,4,3,2,1\},\{4,4,4,2,3,4,1,2,3,4,3,2,1\},$ \\ & & $\{2,3,4,4,3,4,1,2,3,4,3,2,1\},\{4,3,4,4,3,4,1,2,3,4,3,2,1\},\{4,1,2,3,4,4,1,2,3,4,3,2,1\},$ \\ & & $\{1,2,3,4,4,3,4,2,3,4,3,2,1\},\{4,2,3,4,4,3,4,2,3,4,3,2,1\}$ \\
					$14$ & $6$ & $\{4,4,3,4,2,3,4,1,2,3,4,3,2,1\},\{2,3,4,4,2,3,4,1,2,3,4,3,2,1\},$ \\ & & $\{4,3,4,4,2,3,4,1,2,3,4,3,2,1\},\{1,2,3,4,4,3,4,1,2,3,4,3,2,1\},$ \\ & & $\{4,2,3,4,4,3,4,1,2,3,4,3,2,1\},\{4,1,2,3,4,4,3,4,2,3,4,3,2,1\}$ \\
					$15$ & $5$ & $\{3,4,4,3,4,2,3,4,1,2,3,4,3,2,1\},\{4,4,4,3,4,2,3,4,1,2,3,4,3,2,1\},$ \\ & & $\{1,2,3,4,4,2,3,4,1,2,3,4,3,2,1\},\{4,2,3,4,4,2,3,4,1,2,3,4,3,2,1\},$ \\ & & $\{4,1,2,3,4,4,3,4,1,2,3,4,3,2,1\}$ \\
					$16$ & $3$ & $\{2,3,4,4,3,4,2,3,4,1,2,3,4,3,2,1\},\{4,3,4,4,3,4,2,3,4,1,2,3,4,3,2,1\},$\\ & & $\{4,1,2,3,4,4,2,3,4,1,2,3,4,3,2,1\}$ \\
					$17$ & $2$ & $\{1,2,3,4,4,3,4,2,3,4,1,2,3,4,3,2,1\},\{4,2,3,4,4,3,4,2,3,4,1,2,3,4,3,2,1\}$ \\
					$18$ & $1$ & $\{4,1,2,3,4,4,3,4,2,3,4,1,2,3,4,3,2,1\}$ \\		
					\hline
			\end{tabular} }
			\caption{Ordered sequences of lowering operators generating $V(\omega_1+\omega_4)$.}
			\label{table:structure_of_rep_table}
		\end{table}
		
		Finally, we give the differential equation satisfied by the spinors from Theorem~\ref{OP2_inv_spinors_theorem}. To begin, we need to first specify a connection on the vector bundle $\mathcal{A}$ associated to the auxiliary $\SO(9)$-bundle of the spin$^9$ structure. Note that $\mathcal{A}$ is associated to the principal $\Spin(9)$ bundle~${F_4\to F_4/\Spin(9)}$ by the composition of the covering map $\lambda_9\colon \Spin(9)\to \SO(9)$ with the standard representation $\rho_{\std}\colon \SO(9) \to \GL\bigl(\R^9\bigr)$. Indeed, there is a natural choice of invariant connection defined on $\mathcal{A}$ as follows. The structure of $\mathfrak{m}\simeq \Sigma_9^{\R} $ as a Clifford module for~$\Cl_9$ gives $9$ linearly independent endomorphisms, corresponding to Clifford multiplication by an orthonormal set of basis vectors for $\R^9$. By slightly modifying the Clifford multiplication (see~\cite[Section~2]{Fri01_weak_spin9}), one obtains endomorphisms $T_i\colon \mathfrak{m}\to\mathfrak{m}$, $i=1,\dots, 9$ satisfying the modified Clifford relations
		$
		T_i \circ T_j + T_j\circ T_i = 2\delta_{i,j}\Id$, $ T_i^* = T_i$, $ \tr T_i =0$
		for $i=1,\dots,9$. In this description, the isotropy image $\sigma (\Spin(9)) \subseteq \SO(\mathfrak{m}) \subseteq \text{End}(\mathfrak{m}) $ coincides with the normaliser of the $9$-dimensional subspace $\mathcal{T}:= \Span_{\R}\{T_1,\dots, T_9\} \subseteq \text{End}(\mathfrak{m}) $ \cite[p.~132]{Fri01_weak_spin9}:
		\[
			\sigma(\Spin(9)) = \bigl\{ g\in \SO(\mathfrak{m})\mid g\mathcal{T}g^{-1} = \mathcal{T} \bigr\}.
		\]
		The $9$-dimensional $\Spin(9)$-representation $\mathcal{T}$ (action via conjugation) is isomorphic to $\rho_{\std}\circ \lambda_9$ (since there is only one irreducible real $9$-dimensional representation of $\Spin(9)$ up to isomorphism), hence we have
		\[
		\mathcal{A} \cong F_4 \times_{\Spin(9)} \mathcal{T} \subseteq F_4 \times_{\Spin(9)} \End(\mathfrak{m}) \cong \End\bigl( T\bigl(\mathbb{OP}^2\bigr)\bigr).
		\]
		In particular, $\mathcal{A}$ naturally inherits a connection $\nabla^{\End}$ from the extension of the Levi-Civita connection to the endomorphism bundle, whose Nomizu map vanishes on $\mathfrak{m}$. In light of Proposition~\ref{symmetricspace_differential_equation}, we finally see that the invariant twisted spinors found above are parallel:
		
		\begin{Theorem}
			\!The $4$-dimensional space of $F_4$-invariant $3$-twisted spin$^9$ spinors on $\mathbb{OP}^2$ is~span\-ned by parallel spinors for the connection $\nabla^{g,\End}:= \nabla^g \otimes \bigl(\nabla^{\End}\bigr)^{\otimes 3}$.
		\end{Theorem}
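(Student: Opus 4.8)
The plan is to deduce the statement directly from Proposition~\ref{symmetricspace_differential_equation}, which is tailor-made for exactly this situation. The homogeneous realisation $\mathbb{OP}^2 = F_4/\Spin(9)$ is a symmetric space: the Cartan decomposition of the symmetric pair $(F_4,\Spin(9))$ gives a reductive decomposition $\mathfrak{f}_4 = \mathfrak{spin}(9) \oplus \mathfrak{m}$ with $[\mathfrak{m},\mathfrak{m}] \subseteq \mathfrak{spin}(9)$, and with respect to it the Nomizu map of the Levi-Civita connection $\nabla^g$ vanishes identically on $\mathfrak{m}$. So the only thing left to check is that the connection $\nabla^{\End}$ on the auxiliary bundle $\mathcal{A}$ coincides with the unique $F_4$-invariant connection whose Nomizu map vanishes on $\mathfrak{m}$; granting this, $\nabla^{g,\End} = \nabla^g \otimes (\nabla^{\End})^{\otimes 3}$ is precisely the connection $\nabla$ of Proposition~\ref{symmetricspace_differential_equation} with $m = 3$, and since the four spinors $\psi_1,\dots,\psi_4$ of Theorem~\ref{OP2_inv_spinors_theorem} are $F_4$-invariant, that proposition yields $\nabla^{g,\End}\psi_p = 0$ for each $p$.

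To verify the remaining point I would argue as in the discussion preceding the theorem. The auxiliary bundle is $\mathcal{A} = F_4 \times_{\rho_{\std}\circ\lambda_9} \R^9$, and the nine endomorphisms $T_1,\dots,T_9 \in \End(\mathfrak{m})$ obtained from the modified Clifford module structure span a subspace $\mathcal{T}$ which, by the normaliser description $\sigma(\Spin(9)) = \{g \in \SO(\mathfrak{m}) : g\mathcal{T}g^{-1} = \mathcal{T}\}$ recalled above, is a $\Spin(9)$-submodule of $\End(\mathfrak{m})$ isomorphic to $\rho_{\std}\circ\lambda_9$. Functoriality of the associated-bundle construction then gives an embedding $\mathcal{A} \cong F_4 \times_{\Spin(9)} \mathcal{T} \hookrightarrow F_4 \times_{\Spin(9)} \End(\mathfrak{m}) \cong \End(T\mathbb{OP}^2)$ as a parallel subbundle (parallel because $\mathcal{T}$ is a $\Spin(9)$-submodule and the Nomizu map of the ambient invariant connection vanishes on $\mathfrak{m}$). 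The extension of $\nabla^g$ to $\End(T\mathbb{OP}^2)$ is built solely out of $\nabla^g$, so its Nomizu map vanishes on $\mathfrak{m}$ as well; restricting it to the parallel subbundle $\mathcal{A}$ produces the connection $\nabla^{\End}$, whose Nomizu map therefore also vanishes on $\mathfrak{m}$. By Proposition~\ref{prelims:Nomizu_characterization} this characterises $\nabla^{\End}$ uniquely, which is what was needed.

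The computation involved is essentially nil: the whole argument is a chain of citations (Proposition~\ref{symmetricspace_differential_equation}, Theorem~\ref{OP2_inv_spinors_theorem}, and the normaliser description from \cite{Fri01_weak_spin9}). The one conceptual point — and hence the place where a careless argument could slip — is the identification $\nabla^{\End}\rvert_{\mathcal{A}} = \nabla^E$, i.e.\ that restricting the Levi-Civita-induced connection on $\End(T\mathbb{OP}^2)$ to the subbundle $\mathcal{A}$ introduces no Nomizu terms along $\mathfrak{m}$; this is guaranteed because $\mathcal{T}$ is $\Spin(9)$-invariant, so the ambient invariant connection preserves the subbundle. With that settled, Proposition~\ref{symmetricspace_differential_equation} applies verbatim and the theorem follows.
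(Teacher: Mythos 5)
Your proposal is correct and follows exactly the paper's own route: the paper also establishes that $\mathbb{OP}^2 = F_4/\Spin(9)$ is symmetric, identifies $\mathcal{A}$ with the $\Spin(9)$-subbundle $F_4\times_{\Spin(9)}\mathcal{T}\subseteq\End(T\mathbb{OP}^2)$ so that $\nabla^{\End}$ has Nomizu map vanishing on $\mathfrak{m}$, and then applies Proposition~\ref{symmetricspace_differential_equation}. The only (minor) notational wrinkle is that you write $\nabla^{\End}\rvert_{\mathcal{A}}=\nabla^E$, whereas the paper already uses $\nabla^{\End}$ to denote the restricted connection on $\mathcal{A}$; the substance is identical.
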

		
\subsection*{Acknowledgements}
		
The authors are grateful to Travis Schedler for his contributions to the representation-theoretical aspect of the paper, and to Marie-Am\'elie Lawn for her comments and fruitful discussions. We are grateful to the referees for their helpful comments.
D.~Artacho is funded by the UK Engineering and Physical Sciences Research Council (EPSRC), grant EP/W5238721. J.~Hofmann was supported by the Engineering and Physical Sciences Research Council [EP/L015234/1, EP/W522429/1]; the EPSRC Centre for Doctoral Training in Geometry and Number Theory (The London School of Geometry and Number Theory: University College London, King's College London, and Imperial College London); and a DAAD Short Term Research Grant for a~research stay at Philipps-Universit\"{a}t Marburg.
		
\pdfbookmark[1]{References}{ref}
\LastPageEnding
		

\begin{thebibliography}{99}
\footnotesize\itemsep=0pt
		
\bibitem{ACFH15}
Agricola I., Chiossi S.G., Friedrich T., H\"oll J., Spinorial description of
 {${\rm SU}(3)$}- and {${\rm G}_2$}-manifolds, \href{https://doi.org/10.1016/j.geomphys.2015.08.023}{\textit{J.~Geom. Phys.}}
 \textbf{98} (2015), 535--555, \href{https://arxiv.org/abs/1411.5663}{arXiv:1411.5663}.

\bibitem{AHL}
Agricola I., Hofmann J., Lawn M.-A., Invariant spinors on homogeneous spheres,
 \href{https://doi.org/10.1016/j.difgeo.2023.102014}{\textit{Differential Geom. Appl.}} \textbf{89} (2023), 102014, 57~pages,
 \href{https://arxiv.org/abs/2023.10201}{arXiv:2023.10201}.

\bibitem{ANT}
Agricola I., Naujoks H., Theiss M., Geometry of principal fibre bundles, {i}n
 preparation.

\bibitem{AM21}
Albanese M., Milivojevi\'c A., ${\rm Spin}^h$ and further generalisations of
 spin, \href{https://doi.org/10.1016/j.geomphys.2021.104174}{\textit{J.~Geom. Phys.}} \textbf{164} (2021), 104174, 13~pages,
 \href{https://arxiv.org/abs/2021.10417}{arXiv:2021.10417}.

\bibitem{gen_spin_AL}
Artacho D., Lawn M.-A., Generalised {${\rm Spin}^r$} structures on homogeneous
 spaces, \href{https://arxiv.org/abs/2303.05433}{arXiv:2303.05433}.

\bibitem{Arv03}
Arvanitoyeorgos A., An introduction to {L}ie groups and the geometry of
 homogeneous spaces, \textit{Stud. Math. Libr.}, Vol.~22, \href{https://doi.org/10.1090/stml/022}{American
 Mathematical Society}, Providence, RI, 2003.

\bibitem{Baez_octonions}
Baez J.C., The octonions, \href{https://doi.org/10.1090/S0273-0979-01-00934-X}{\textit{Bull. Amer. Math. Soc. (N.S.)}} \textbf{39}
 (2002), 145--205, \href{https://arxiv.org/abs/math/0105155}{arXiv:math/0105155}.

\bibitem{Bar93}
B\"ar C., Real {K}illing spinors and holonomy, \href{https://doi.org/10.1007/BF02102106}{\textit{Comm. Math. Phys.}}
 \textbf{154} (1993), 509--521.

\bibitem{BFGK}
Baum H., Friedrich T., Grunewald R., Kath I., Twistors and {K}illing spinors on
 {R}iemannian manifolds, \textit{Teubner-Texte Math.}, Vol. 124, B.G.~Teubner
 Verlagsgesellschaft mbH, Stuttgart, 1991.

\bibitem{BLR}
Bayard P., Lawn M.-A., Roth J., Spinorial representation of submanifolds in
 {R}iemannian space forms, \href{https://doi.org/10.2140/pjm.2017.291.51}{\textit{Pacific~J. Math.}} \textbf{291} (2017),
 51--80, \href{https://arxiv.org/abs/1602.02919}{arXiv:1602.02919}.

\bibitem{besse}
Besse A.L., Einstein manifolds, Class. Math., \href{https://doi.org/10.1007/978-3-540-74311-8}{Springer}, Berlin, 2008.

\bibitem{CS07}
Conti D., Salamon S., Generalized {K}illing spinors in dimension~5,
 \href{https://doi.org/10.1090/S0002-9947-07-04307-3}{\textit{Trans. Amer. Math. Soc.}} \textbf{359} (2007), 5319--5343,
 \href{https://arxiv.org/abs/math.DG/0508375}{arXiv:math.DG/0508375}.

\bibitem{CLS21}
Cort\'es V., Lazaroiu C., Shahbazi C.S., Spinors of real type as polyforms and
 the generalized {K}illing equation, \href{https://doi.org/10.1007/s00209-021-02726-6}{\textit{Math.~Z.}} \textbf{299} (2021),
 1351--1419, \href{https://arxiv.org/abs/1911.08658}{arXiv:1911.08658}.

\bibitem{EH16}
Espinosa M., Herrera R., Spinorially twisted spin structures,~{I}: {C}urvature
 identities and eigenvalue estimates, \href{https://doi.org/10.1016/j.difgeo.2016.01.008}{\textit{Differential Geom. Appl.}}
 \textbf{46} (2016), 79--107, \href{https://arxiv.org/abs/1409.6246}{arXiv:1409.6246}.

\bibitem{Fri80}
Friedrich T., Der erste {E}igenwert des {D}irac-{O}perators einer kompakten,
 {R}iemannschen {M}annigfaltigkeit nichtnegativer {S}kalarkr\"ummung,
 \href{https://doi.org/10.1002/mana.19800970111}{\textit{Math. Nachr.}} \textbf{97} (1980), 117--146.

\bibitem{F_immersions}
Friedrich T., On the spinor representation of surfaces in {E}uclidean
 {$3$}-space, \href{https://doi.org/10.1016/S0393-0440(98)00018-7}{\textit{J.~Geom. Phys.}} \textbf{28} (1998), 143--157,
 \href{https://arxiv.org/abs/dg-ga/9712021}{arXiv:dg-ga/9712021}.

\bibitem{Friedrich_book}
Friedrich T., Dirac operators in {R}iemannian geometry, \textit{Grad. Stud.
 Math.}, Vol.~25, \href{https://doi.org/10.1090/gsm/025}{American Mathematical Society}, Providence, RI, 2000,
 \href{https://arxiv.org/abs/gr-qc/9507046}{arXiv:gr-qc/9507046}.

\bibitem{Fri01_weak_spin9}
Friedrich T., Weak {S}pin(9)-structures on 16-dimensional {R}iemannian
 manifolds, \href{https://doi.org/10.4310/AJM.2001.v5.n1.a9}{\textit{Asian~J. Math.}} \textbf{5} (2001), 129--160,
 \href{https://arxiv.org/abs/math/9912112}{arXiv:math/9912112}.

\bibitem{FKMS97}
Friedrich T., Kath I., Moroianu A., Semmelmann U., On nearly parallel
 {$G_2$}-structures, \href{https://doi.org/10.1016/S0393-0440(97)80004-6}{\textit{J.~Geom. Phys.}} \textbf{23} (1997), 259--286.

\bibitem{FT}
Friedrich T., Trautman A., Spin spaces, {L}ipschitz groups, and spinor bundles,
 \href{https://doi.org/10.1023/A:1006713405277}{\textit{Ann. Global Anal. Geom.}} \textbf{18} (2000), 221--240,
 \href{https://arxiv.org/abs/math/9901137}{arXiv:math/9901137}.

\bibitem{FultonHarris}
Fulton W., Harris J., Representation theory, \textit{Grad. Texts in Math.},
 Vol. 129, \href{https://doi.org/10.1007/978-1-4612-0979-9}{Springer}, New York, 1991.

\bibitem{Gillard_2005}
Gillard J., Gran U., Papadopoulos G., The spinorial geometry of supersymmetric
 backgrounds, \href{https://doi.org/10.1088/0264-9381/22/6/009}{\textit{Classical Quantum Gravity}} \textbf{22} (2005),
 1033--1076, \href{https://arxiv.org/abs/hep-th/0410155}{arXiv:hep-th/0410155}.

\bibitem{GW09}
Goodman R., Wallach N.R., Symmetry, representations, and invariants,
 \textit{Grad. Texts in Math.}, Vol. 255, \href{https://doi.org/10.1007/978-0-387-79852-3}{Springer}, Dordrecht, 2009.

\bibitem{Gru90}
Grunewald R., Six-dimensional {R}iemannian manifolds with a real {K}illing
 spinor, \href{https://doi.org/10.1007/BF00055017}{\textit{Ann. Global Anal. Geom.}} \textbf{8} (1990), 43--59.

\bibitem{herrera_spinq}
Herrera H., Herrera R., {${\rm Spin}^q$} manifolds admitting parallel and
 {K}illing spinors, \href{https://doi.org/10.1016/j.geomphys.2007.01.002}{\textit{J.~Geom. Phys.}} \textbf{57} (2007), 1525--1539.

\bibitem{ES19}
Herrera R., Santana N., Spinorially twisted {S}pin structures.~{II}: {T}wisted
 pure spinors, special {R}iemannian holonomy and {C}lifford monopoles,
 \href{https://doi.org/10.3842/SIGMA.2019.072}{\textit{SIGMA}} \textbf{15} (2019), 072, 48~pages, \href{https://arxiv.org/abs/1506.07681}{arXiv:1506.07681}.

\bibitem{HS90}
Hirzebruch F., Slodowy P., Elliptic genera, involutions, and homogeneous spin
 manifolds, \href{https://doi.org/10.1007/BF00147351}{\textit{Geom. Dedicata}} \textbf{35} (1990), 309--343,
 \href{https://arxiv.org/abs/0712.57010}{arXiv:0712.57010}.

\bibitem{jordansasakian}
Hofmann J., Homogeneous {S}asakian and 3-{S}asakian structures from the
 spinorial viewpoint, \href{https://doi.org/10.1016/j.aim.2024.109493}{\textit{Adv. Math.}} \textbf{439} (2024), 109493,
 39~pages, \href{https://arxiv.org/abs/2024.10949}{arXiv:2024.10949}.

\bibitem{irreps}
Itzkowitz G., Rothman S., Strassberg H., A note on the real representations
 of~{${\rm SU}(2,{\mathbb C})$}, \href{https://doi.org/10.1016/0022-4049(91)90023-U}{\textit{J.~Pure Appl. Algebra}} \textbf{69}
 (1991), 285--294.

\bibitem{KS}
Kusner R., Schmitt N., The spinor representation of surfaces in space,
 \href{https://arxiv.org/abs/dg-ga/9610005}{arXiv:dg-ga/9610005}.

\bibitem{lawson}
Lawson Jr. H.B., {${\rm Spin}^h$} manifolds, \href{https://doi.org/10.3842/SIGMA.2023.012}{\textit{SIGMA}} \textbf{19} (2023),
 012, 7~pages, \href{https://arxiv.org/abs/2301.09683}{arXiv:2301.09683}.

\bibitem{LM}
Lawson Jr. H.B., Michelsohn M.-L., Spin geometry, \textit{Princeton Math. Ser.},
 Vol.~38, Princeton University Press, Princeton, NJ, 1989.

\bibitem{CS1}
Lazaroiu C.I., Shahbazi C.S., Complex {L}ipschitz structures and bundles of
 complex {C}lifford modules, \href{https://doi.org/10.1016/j.difgeo.2018.08.006}{\textit{Differential Geom. Appl.}} \textbf{61}
 (2018), 147--169, \href{https://arxiv.org/abs/1711.07765}{arXiv:1711.07765}.

\bibitem{CS2}
Lazaroiu C.I., Shahbazi C.S., Real pinor bundles and real {L}ipschitz
 structures, \href{https://doi.org/10.4310/AJM.2019.v23.n5.a3}{\textit{Asian~J. Math.}} \textbf{23} (2019), 749--836,
 \href{https://arxiv.org/abs/1606.07894}{arXiv:1606.07894}.

\bibitem{CS3}
Lazaroiu C.I., Shahbazi C.S., Dirac operators on real spinor bundles of complex
 type, \href{https://doi.org/10.1016/j.difgeo.2022.101849}{\textit{Differential Geom. Appl.}} \textbf{80} (2022), 101849, 53~pages,
 \href{https://arxiv.org/abs/2022.10184}{arXiv:2022.10184}.

\bibitem{LiE}
Leeuwen M., Cohen A.M., Lisser B., Li{E}: a computer algebra package for {L}ie
 group computations, 1988, available at
 \url{http://wwwmathlabo.univ-poitiers.fr/~maavl/LiE/}.

\bibitem{moroianu_spinc}
Moroianu A., Parallel and {K}illing spinors on {${\rm Spin}^c$} manifolds,
 \href{https://doi.org/10.1007/s002200050142}{\textit{Comm. Math. Phys.}} \textbf{187} (1997), 417--427.

\bibitem{nomizu}
Nomizu K., Invariant affine connections on homogeneous spaces, \href{https://doi.org/10.2307/2372398}{\textit{Amer.~J.
 Math.}} \textbf{76} (1954), 33--65.

\bibitem{oni}
Oni\v{s}\v{c}ik A., Transitive compact transformation groups, in Eleven
 {P}apers on {T}opology and {A}lgebra, 2nd ed., \textit{Amer. Math. Soc. Transl.}, Vol.~55, \href{https://doi.org/10.1090/trans2/055}{Springer}, New York, 1966, 153--194.

\bibitem{Samelson}
Samelson H., Notes on {L}ie algebras, 2nd ed., \textit{Universitext}, \href{https://doi.org/10.1007/978-1-4613-9014-5}{Springer}, New
 York, 1990.

\bibitem{Wang}
Wang M.Y., Parallel spinors and parallel forms, \href{https://doi.org/10.1007/BF00137402}{\textit{Ann. Global Anal.
 Geom.}} \textbf{7} (1989), 59--68.

\bibitem{wernli}
Wernli K., Lecture notes on spin geometry, \href{https://arxiv.org/abs/1911.09766}{arXiv:1911.09766}.

\bibitem{Ziller_2010}
Ziller W., Lie groups. {R}epresentation theory and symmetric spaces, 2010,
 available at
 \url{https://www2.math.upenn.edu/~wziller/math650/LieGroupsReps.pdf}.

\bibitem{ziller}
Ziller W., Homogeneous {E}instein metrics on spheres and projective spaces,
 \href{https://doi.org/10.1007/BF01456947}{\textit{Math. Ann.}} \textbf{259} (1982), 351--358.

\end{thebibliography}
\end{document}